\pgfplotsset{compat=1.16}
\DeclareMathAlphabet\mathcalbf{OMS}{cmsy}{b}{n}
\DeclareMathOperator{\supp}{supp}
\DeclareMathOperator{\range}{Range}
\newcommand{\pd}{\partial}
\newcommand{\intdx}[4]{\int_{#1}^{#2} {#3}\,{\mathrm d}{#4}}
\newcommand{\bigO}[1]{\mathcal{O}\left({#1}\right)}
\newcommand{\reals}{\mathbb{R}}
\newcommand{\wholespace}{\reals^3}
\newcommand{\vv}[1]{\mathbf{#1}}     
\newcommand{\gv}[1]{\boldsymbol{#1}} 
\newcommand{\Newton}{\mathcal{N}}
\newcommand{\NewtonVec}{\mathcalbf{N}}
\DeclareMathOperator{\DIV}{div}
\DeclareMathOperator{\curl}{curl}
\newcommand{\grad}{\text{--}\nabla}
\newcommand{\gradp}{\nabla}
\newcommand{\Laplace}{\text{--}\Delta}
\newcommand{\Laplacevec}{\textbf{--}\gv{\Delta}}
\newcommand{\LebVec}{\vv{L}^2(\Omega)}
\newcommand{\Hcurl}{\vv{H}(\curl;\Omega)}
\newcommand{\HOcurl}{\vv{H}_0(\curl;\Omega)}
\newcommand{\HOcurldual}{\vv{H}_0(\curl;\Omega)'}
\newcommand{\HOne}{H^1(\Omega)}
\newcommand{\HOneZero}{H^1_0(\Omega)}
\newcommand{\HOneVec}{\vv{H}^1(\Omega)}
\newcommand{\HOneZeroVec}{\vv{H}^1_0(\Omega)}
\newcommand{\HOneDualVec}{\vv{H}^{-1}(\Omega)}
\newcommand{\Hloc}{H^1_{\mathrm{loc}}(\wholespace)}
\newcommand{\HlocVec}{\vv{H}^1_{\mathrm{loc}}(\wholespace)}
\newcommand{\Hdiv}{\vv{H}(\DIV;\Omega)}
\newcommand{\HOdiv}{\vv{H}_0(\DIV;\Omega)}
\newcommand{\distris}{\mathcal{D}'(\Omega)}
\newcommand{\distrisvec}{\mathcalbf{D}'(\Omega)}
\newcommand{\wholedistris}{\mathcal{D}'(\wholespace)}
\newcommand{\wholedistrisvec}{\mathcalbf{D}'(\wholespace)}
\newcommand{\testfcts}{\mathcal{D}(\Omega)}
\newcommand{\testfctsvec}{\mathcalbf{D}(\Omega)}
\newcommand{\wholetestfcts}{\mathcal{D}(\wholespace)}
\newcommand{\wholetestfctsvec}{\mathcalbf{D}(\wholespace)}
\newcommand{\wholesmoothfcts}{\mathcal{E}(\wholespace)}
\newcommand{\wholesmoothfctsvec}{\mathcalbf{E}(\wholespace)}
\newcommand{\compdistris}{\mathcal{E}'(\wholespace)}
\newcommand{\compdistrisvec}{\mathcalbf{E}'(\wholespace)}
\DeclareMathOperator{\tracegrad}{\text{--}\nabla_\Gamma}
\DeclareMathOperator{\tracegradp}{\nabla_\Gamma}
\DeclareMathOperator{\tracediv}{\mathrm{div}_\Gamma}
\DeclareMathOperator{\tracecurl}{\mathrm{curl}_\Gamma}
\DeclareMathOperator{\tracecurlvec}{\mathbf{curl}_\Gamma}
\DeclareMathOperator{\tracelaplace}{\text{--}\Delta_\Gamma}
\newcommand{\SobTraceDual}{H^{-\frac{1}{2}}(\Gamma)}
\newcommand{\HRMinusDiv}{\vv{H}^{-\frac{1}{2}}_{\mathrm{R}}(\tracediv;\Gamma)}
\newcommand{\HTMinusCurl}{\vv{H}^{-\frac{1}{2}}_{\mathrm{T}}(\tracecurl;\Gamma)}
\newcommand{\FEMcont}{S_h^n(\Omega)}
\newcommand{\FEMOcont}{S_{h,0}^n(\Omega)}
\newcommand{\FEMNED}{\mathbf{NED}_h^n(\Omega)}
\newcommand{\FEMNEDO}{\mathbf{NED}_{h,0}^n(\Omega)}
\newcommand{\FEMRT}{\mathbf{RT}_h^n(\Omega)}
\newcommand{\FEMRTO}{\mathbf{RT}_{h,0}^n(\Omega)}
\newcommand{\FEMdisc}{S_{h,\mathrm{disc}}^n(\Omega)}
\newcommand{\BEMcont}{S_h^n(\Gamma)}
\newcommand{\BEMRT}{\mathbf{RT}_h^n(\Gamma)}
\newcommand{\BEMdisc}{S_{h,\mathrm{disc}}^n(\Gamma)}
\begin{document}

\title{Div-Curl Problems and $\vv{H}^1$-regular Stream Functions in 3D Lipschitz Domains}
\author[1]{Matthias Kirchhart}
\author[2]{Erick Schulz}
\authormark{Matthias Kirchhart and Erick Schulz}
\address[1]{\orgdiv{Applied and Computational Mathematics}, \orgname{RWTH~Aachen}, \orgaddress{\country{Germany}}}
\address[2]{\orgdiv{Seminar in Applied Mathematics}, \orgname{ETH~Z{\"u}rich}, \orgaddress{\country{Switzerland}}}
\corres{Matthias Kirchhart, \email{kirchhart@acom.rwth-aachen.de}}
\presentaddress{Applied and Computational Mathematics\\RWTH Aachen\\Schinkelstra{\ss}e 2\\52062 Aachen\\ Germany}

\abstract[Abstract]{We consider the problem of recovering the divergence-free
velocity field $\vv{U}\in\LebVec$ of a given vorticity 	$\vv{F}=\curl\vv{U}$ on
a bounded Lipschitz domain $\Omega\subset\wholespace$. To that end, we solve the
\enquote{div-curl problem} for a given $\vv{F}\in\HOneDualVec$. The solution is
expressed in terms of a vector potential (or stream function) $\vv{A}\in\HOneVec$
such that $\vv{U}=\curl{\vv{A}}$. After discussing existence and uniqueness of
solutions and associated vector potentials, we propose a well-posed construction
for the stream function. A numerical method based on this construction is presented,
and experiments confirm that the resulting approximations display higher regularity
than those of another common approach.}

\keywords{div-curl system, stream function, vector potential, regularity, vorticity}

\maketitle

\section{Introduction}\label{sec: Introduction}
Let $\Omega\subset\wholespace$ be a bounded Lipschitz domain. Given a vorticity field
$\vv{F}\left(\mathbf{x}\right)\in\wholespace$ defined over $\Omega$, we are interested
in solving the problem of \emph{velocity recovery}: 
\begin{equation}\label{eqn:velocity-recovery}
\left\lbrace
\begin{aligned}
\curl\vv{U} &= \vv{F}\\
\DIV\vv{U}  &= 0
\end{aligned}
\right.
\qquad\text{in $\Omega$.}
\end{equation}

This problem naturally arises in fluid mechanics when studying the vorticity
formulation of the incompressible Navier--Stokes equations. Vortex methods,
for example, are based on the vorticity formulation and require a solution of 
problem~\eqref{eqn:velocity-recovery} at every time-step.\cite{cottet2000} 
While our motivation lies in fluid dynamics, this \enquote{div-curl problem}
also is interesting in its own right. 

On the whole space $\wholespace$, this problem is a classical matter. Whenever
$\vv{F}$ is smooth and compactly supported, the unique solution $\vv{U}$ of problem~%
\eqref{eqn:velocity-recovery} that decays to zero at infinity is given by
the Biot--Savart law.\cite[Proposition~2.16]{majda2001} However, the case where
$\Omega$ is a bounded domain is significantly more challenging.

In numerical simulations of the incompressible Navier--Stokes equations, it
is common to fulfil the constraint $\DIV\vv{U}=0$ only approximately, but it has
recently been demonstrated that such a violation can cause significant instabilities.
The importance for numerical methods to fulfil this constraint \emph{exactly} was
stressed by John et al.\cite{john2017} One way of achieving this requirement is
the introduction of a \emph{stream function}, or \emph{vector potential}: instead
of solving problem~\eqref{eqn:velocity-recovery} directly, one seeks an approximation
$\vv{A}_h$ of an auxiliary vector-field $\vv{A}$ such that $\vv{U}=\curl\vv{A}$.
Because of the vector calculus identity $\DIV\circ\curl\equiv 0$, the velocity
field $\vv{U}_h = \curl\vv{A}_h$ is always exactly divergence free. 

In particle methods, the particle positions $\vv{x}_i\in\Omega$, $i=1,\dotsc,N$,
are updated by solving $\tfrac{\mathrm d}{{\mathrm d}t}{\vv{x}_i}(t)
= \vv{U}\bigl(t,\vv{x}_i(t)\bigr)$, $i=1,\dotsc,N$ using a time-stepping scheme.
It makes sense to use a \emph{volume-preserving} scheme for this problem. However,
most of these schemes require a stream function $\vv{A}$ and not the velocity
$\vv{U}$ as input,\cite[Chapter~VI.9]{hairer2006} and thus arises the desire to
have a stream function of maximum regularity at hand. In this work we describe
how to compute stream functions that are at least $\HOneVec$-regular---even on
non-smooth domains---thereby improving on the regularity of approximations
currently available in related literature.

\subsection{Summary of Results}
Our results can be summarised as follows. 
\begin{enumerate}
	\item\textbf{Existence of Velocity Fields.}\label{item:existence}~%
	(\Cref{thm:existence of solutions})
	Problem~\eqref{eqn:velocity-recovery} has a solution $\vv{U}\in\vv{L}^2(\Omega)$
	if and only if $\vv{F}\in\HOneDualVec$ and $\langle\vv{F},\vv{V}\rangle = 0$ for
	all $\vv{V}\in\HOneZeroVec$ with $\curl\vv{V}=\gv{0}$. In \Cref{lem:alternative-conditions},
	we discuss equivalent alternative formulations of the latter condition.
	
	\item\textbf{Existence of Stream Functions.}\label{item:streamfct}~%
	(\Cref{thm:existence of stream function})
	Let the velocity $\vv{U}\in\LebVec$ solve problem~\eqref{eqn:velocity-recovery}.
	Then, $\vv{U}$ can be written in terms of a stream function $\vv{A}\in\HOneVec$
    as $\vv{U}=\curl\vv{A}$	if and only if $\vv{U}$ fulfils $\intdx{\Gamma_i}{}{
    \vv{U}\cdot\vv{n}}{S}=0$ on each connected component $\Gamma_i$ of the boundary
    $\Gamma\coloneqq \partial\Omega$.
	
	\item\label{uniqueness item}\textbf{Uniqueness.} (\Cref{thm:uniqueness of velocity field,%
	thm:uniqueness of stream function}) If $\Omega$ is \enquote{handle-free}, the
    solution $\vv{U}\in\vv{L}^2(\Omega)$ of problem	\eqref{eqn:velocity-recovery}
    can be made unique by prescribing its normal trace $\vv{U}\cdot\vv{n}\in
    H^{-\frac{1}{2}}(\Gamma)$. Moreover, if the prescribed boundary data fulfils
    $\intdx{\Gamma_i}{}{\vv{U}\cdot\vv{n}}{S}=0$ on each connected component
    $\Gamma_i\subset\Gamma$ of the boundary, there exist conditions that
    \emph{uniquely} determine a stream function $\vv{A}\in\vv{H}^1(\Omega)$ such that
	$\vv{U}=\curl\vv{A}$.
	
	\item\textbf{Construction of Solutions.} (\Cref{sec:construction}) The main
    novelty	of this work lies in the explicit construction of $\vv{H}^1(\Omega)$-regular
    stream functions $\vv{A}$ directly from the vorticity $\vv{F}$. Given a vorticity
     $\vv{F}\in\HOneDualVec$ fulfilling the conditions of \cref{item:existence} and
    boundary data $\vv{U}\cdot\vv{n}\in H^{-\frac{1}{2}}(\Gamma)$ fulfilling the
    conditions of \cref{item:streamfct}, this construction will yield a stream
    function $\vv{A}\in\vv{H}^1(\Omega)$ such that $\vv{U}=\curl\vv{A}$ solves
    problem~\eqref{eqn:velocity-recovery}. If the domain is handle-free, the obtained
    solution will be the uniquely defined stream function $\vv{A}\in\HOneVec$ from
    \cref{uniqueness item}. Numerical methods will be described in \Cref{sec:numerics}.
	
	\item\textbf{Well-posedness.} (\Cref{thm:well-posedness}) From the structure of the
	construction one can directly infer its well-posedness. The vector-fields
	$\vv{U}$ and $\vv{A}$ continuously depend on the given data $\vv{F}\in%
	\HOneDualVec$ and $\vv{U}\cdot\vv{n}\in H^{-\frac{1}{2}}(\Gamma)$.
	
	\item\textbf{Regularity.} (\Cref{thm:regularity}) If in addition to the above assumptions the given
	data fulfils $\vv{F}\in\LebVec$ and $\vv{U}\cdot\vv{n}\in L^2(\Gamma)$, then
	$\vv{U}\in\vv{H}^{\frac{1}{2}}(\Omega)$ and $\vv{A}\in\vv{H}^{\frac{3}{2}}(\Omega)$.
\end{enumerate}
The results concerning
existence and uniqueness of velocity fields $\vv{U}$ follow from classical
functional analytic arguments and are well-known, but covered for completeness.
Existence of stream functions $\vv{A}\in\vv{H}^1(\Omega)$ is due to
Girault and Raviart\cite[Theorem~3.4]{raviart1986}. Their work, however, left
unclear how to compute such a potential.

\subsection{Problematic Approaches}\label{sec:problematic-approaches}
A naive approach to the div-curl problem~\eqref{eqn:velocity-recovery} relies on the observation that
\begin{equation}
\Laplacevec\vv{U} = \curl(\underbrace{\curl\vv{U}}_{=\vv{F}})-\gradp(\underbrace{\DIV\vv{U}}_{=0})
                  = \curl\vv{F}.
\end{equation}
Based on this vector-calculus identity, it is tempting to solve three \emph{decoupled} scalar
Poisson problems $\Laplace U_i=(\curl\vv{F})_i$, $i=1,2,3$, for the components of $\vv{U}$, say
by prescribing the value of each one on the boundary. However, this approach is problematic: it
is our aim to \emph{integrate $\vv{F}$}, but instead this strategy asks that we \emph{differentiate}
first. Therefore, it needlessly requires to impose more regularity on the right-hand side. Moreover,
there is no guarantee that its solution is divergence-free. Finally, since the tangential components
of $\vv{U}$ allow us to compute $(\curl\vv{U})\cdot\vv{n}$ on the boundary, the boundary data must
fulfil the compatibility condition $(\curl\vv{U})\cdot\vv{n}=\vv{F}\cdot\vv{n}$. We will later see
that the solutions of problem~\eqref{eqn:velocity-recovery} are \emph{usually not} $\HOneVec$-regular,
and thus the classic existence and uniqueness results in $\HOne$ for the scalar Poisson problems
$\Laplace U_i=(\curl\vv{F})_i$ are not applicable either.

Another straightforward approach assumes that $\vv{F}\in\LebVec$. One may then
extend $\vv{F}$ by zero to the whole space, yielding $\widetilde{\vv{F}}\in
\vv{L}^2(\wholespace)$, and apply the Biot--Savart law to this extension. The normal
trace $\vv{U}\cdot\vv{n}$ on $\Gamma$ can then be prescribed by adding a suitable
\enquote{potential flow}. The main caveat of this strategy is that unless $\vv{F}\cdot\vv{n}=0$
on the boundary, the zero extension $\widetilde{\vv{F}}$ will \emph{not} be divergence-%
free, and in this case the Biot--Savart law fails to yield the correct result. We
will later see that this approach can in fact be fixed by introducing a suitable
correction on the boundary.

\subsection{Our Results in Context}
Clearly, for a given velocity field $\vv{U}$, the condition $\vv{U}=\curl\vv{A}$
alone does not uniquely determine~$\vv{A}$: because of the vector calculus
identity $\curl\circ\,(\grad)\equiv\gv{0}$, any gradient may be added to $\vv{A}$
without changing its curl. It is thus natural to enforce the so-called Coulomb
gauge condition $\DIV\vv{A}=0$, but this alone still does not ensure uniqueness.
For a given $\vv{F}$, we are then in fact facing two systems:
\begin{equation}
\begin{array}{rcl c rcl}
\curl\vv{A} &=& \vv{U},  &\multirow{2}{2cm}{\centering\text{and}} &  \curl\vv{U} &=& \vv{F}, \\
\DIV \vv{A} &=& 0,       &                                        &  \DIV \vv{U} &=& 0.
\end{array}
\end{equation}
These systems differ in the involved spaces and boundary conditions. For the
$\vv{U}$-system we would like to prescribe $\vv{U}\cdot\vv{n}$ on $\pd\Omega$,
while for the $\vv{A}$-system we actually do not care which boundary conditions
are prescribed, as long as they ensure that the solution is unique, and---%
hopefully---as regular as possible.

Many results in the literature are concerned with only one of these systems,
an overview of some results is given in~\Cref{tab:literature-review}{\kern-5pt}.
For example, the famous work of Amrouche et al.\cite{amrouche1998} is concerned
with the $\vv{A}$-system and $\vv{U}\in\LebVec$. They propose tangential
or normal boundary conditions for $\vv{A}$ and numerical methods to approximate
the resulting stream functions. However, even for perfectly smooth velocity
fields $\vv{U}$, the resulting potentials will usually only have Sobolev
regularity $\vv{H}^{\frac{1}{2}}(\Omega)$, unless the domain is assumed to be
more regular than just Lipschitz. In particular, functions from $\Hcurl\cap\HOdiv$
may develop quite strong singularities near corners of the domain, which makes
it difficult to approximate them efficiently.\cite[Figure~1.3]{arnold2018} Note
that these singularities can only occur in non-smooth domains: for $C^{1,1}$-%
domains one has $\vv{A}_{\mathrm{T}},\vv{A}_{\mathrm{N}}\in\vv{H}^1(\Omega)$, so
in this case there is no need to look for more regular potentials. Almost all of
the literature concerning numerics for the $\vv{A}$-system considers either
$\vv{A}_{\mathrm{T}}$ or $\vv{A}_{\mathrm{N}}$.\cite[Chapter~6.1]{alonso2010}
Many authors also consider more general problems involving inhomogeneous
material coefficients,\cite{auchmuty2005} or the $L^p$-setting,\cite{kozono2009,%
amrouche2013} which on the other hand again often requires higher regularity
assumptions on the boundary.

\begin{table}
\centering
\begin{tabular*}{\textwidth}{lclll}
\toprule
Reference                              & Regularity of $\Omega$ & Input                    & Output                                               & Remarks \\\midrule
Amrouche et al.\cite{amrouche1998}     & Lipschitz              & $\vv{U}\in\LebVec$       & $\vv{A}_{\mathrm{T}}\in\vv{H}^{\frac{1}{2}}(\Omega)$ & tangential potential\\
Amrouche et al.\cite{amrouche1998}     & Lipschitz              & $\vv{U}\in\LebVec$       & $\vv{A}_{\mathrm{N}}\in\vv{H}^{\frac{1}{2}}(\Omega)$ & normal potential, only if $\vv{U}\cdot\vv{n}=0$\\
Bramble and Pasciak\cite{bramble2004}  & Lipschitz              & $\vv{F}\in\HOneDualVec$  & $\vv{U}\in\LebVec$                                   & \\
Alonso and Valli\cite{alonso1996}      & $C^{1,1}$              & $\vv{F}\in\LebVec$       & $\vv{U}\in\LebVec$                                   & \\\midrule
This work                              & Lipschitz              & $\vv{F}\in\HOneDualVec$  & $\vv{A}\in\HOneVec$                                  & solves both systems simultaneously\\\bottomrule
\end{tabular*}
\caption{\label{tab:literature-review}Approaches found in the literature for
solving related div-curl problems and the \emph{component-wise} Sobolev
regularity of the input and output data. No other approach known to the authors
achieves $\HOneVec$-regularity of $\vv{A}$ in non-smooth Lipschitz domains.}
\end{table}

It has long been known that stream functions of regularity $\vv{H}^1(\Omega)$ do
exist, but so far conditions that uniquely characterise them have not been given.
We are unaware of any previous approaches that allow us to efficiently compute
$\vv{A}\in\vv{H}^1(\Omega)$ for the case of general Lipschitz domains. Our work
aims to close this gap. It proposes natural conditions which \emph{uniquely}
determine a vector potential $\vv{A}_1\in\HOneVec$ \emph{without} explicitly
involving boundary values of $\vv{A}_1$. We believe it is because previous
approaches do prescribe boundary conditions like $\vv{A}_{\mathrm{T}}\cdot\vv{n}
=0$ or $\vv{A}_{\mathrm{N}}\times\vv{n}=\gv{0}$ that they yield less regular
stream functions.
 
Our algorithm utilises the Newton operator: the bulk of the work lies in the
explicit computation of a volume integral. Two companion \emph{scalar} elliptic
equations must also be solved on the boundary of the domain, but these are easily
tackled using standard boundary element methods. While this construction solves
both the $\vv{U}$- and $\vv{A}$-systems simultaneously, we first discuss existence
and uniqueness of solutions for each of them individually.

\section{Basic Definitions and Notation}

\subsection{Spaces Defined on Volumes}\label{sec:Spaces defined on Volumes}
We denote by $\testfcts\coloneqq C^{\infty}_0(\Omega)$ the space of smooth
compactly supported functions in $\Omega$, and write $\distris$  for the space of
distributions. Their vector-valued analogues $\testfctsvec\coloneqq
\bigl(C^{\infty}_0(\Omega)\bigr)^3$ and $\distrisvec$ are distinguished by a
\textbf{bold} font. In the whole space $\wholespace$, we will make use of the
space of smooth functions $\wholesmoothfcts\coloneqq C^\infty(\wholespace)$ and
its dual $\compdistris$---the space of compactly supported distributions. Their
vector-valued analogues will be denoted by $\wholesmoothfctsvec$ and
$\compdistrisvec$, respectively.

We write $L^2(\Omega)$ and $\mathbf{L}^2(\Omega)$ for the Hilbert spaces of
square integrable scalar and vector-valued functions defined over $\Omega$.
$H^s(\Omega)$ and $\mathbf{H}^s(\Omega)$, $s>0$, refer to the corresponding
Sobolev spaces. The spaces $H_0^s(\Omega)$ and $\vv{H}_0^s(\Omega)$ are defined
as the closures of $\testfcts$ and $\testfctsvec$ in $H^s(\Omega)$ and
$\vv{H}^s(\Omega)$, respectively. For $s<0$ we set $H^{s}(\Omega)\coloneqq 
H^{-s}_0(\Omega)'$ and $\vv{H}^{s}(\Omega)\coloneqq \vv{H}^{-s}_0(\Omega)'$.
We will always identify $L^2(\Omega)$ and $\LebVec$ with their duals, i.\,e.,
$L^2(\Omega)'= L^2(\Omega)$ and $\LebVec' = \LebVec$. The Hilbert spaces
$\Hdiv\coloneqq \{\vv{U}\in\vv{L}^2(\Omega)\,\vert\,\DIV\vv{U}\in L^2(\Omega)\}$
and
$\Hcurl \coloneqq  \{\mathbf{U}\in\LebVec\,\vert\,\curl\mathbf{U}\in\LebVec\}$
are equipped with the obvious graph norms. The related \enquote{homogeneous
spaces} are defined as $\HOcurl\coloneqq \overline{\testfctsvec}^{\Hcurl}$ and
$\HOdiv\coloneqq\overline{\testfctsvec}^{\Hdiv}$. Accordingly, all differential
operators are to be understood in the distributional sense. We refer to Amrouche
et al. for a detailed exposition of the regularity and compactness properties of
these spaces.\cite[Sections~2.2 and~2.3]{amrouche1998} Evidently, these
definitions can also be used with $\Omega$ replaced by $\wholespace$ and vice-%
versa.

\subsection{Trace Spaces}
We refer to McLean\cite[Chapter~3]{mclean2000}, Sauter and Schwab\cite[Chapter~2]{sauter2011}
and Buffa et al.\cite {buffa2002} for theory concerning extension of the traces
\begin{equation}
\gamma V       \coloneqq V|_\Gamma,                           \quad 
\nu\vv{u}      \coloneqq \vv{u}|_\Gamma\cdot\vv{n},           \quad
\gv{\tau}\vv{u}\coloneqq \vv{u}|_\Gamma - (\vv{u}|_\Gamma\cdot\vv{n})\vv{n}
                = \vv{n}\times(\vv{u}|_\Gamma\times\vv{n}),   \quad
\text{and}                                                    \quad
\gv{\rho}\vv{u}\coloneqq \vv{u}|_\Gamma\times\vv{n},
\end{equation}
to continuous and surjective mappings
\begin{equation}
\begin{aligned}
\gamma&:    &  \HOne &\to H^{\frac{1}{2}}(\Gamma), &  \ker(\gamma)   &=\HOneZero,\\
\nu&:       &  \Hdiv &\to\SobTraceDual,            &  \ker(\nu)      &=\HOdiv,   \\
\gv{\tau}&: &  \Hcurl&\to\HTMinusCurl,             &  \ker(\gv{\tau})&=\HOcurl,  \\
\gv{\rho}&: &  \Hcurl&\to\HRMinusDiv,              &  \ker(\gv{\rho})&=\HOcurl,
\end{aligned}
\end{equation}
having continuous right-inverses (so-called \textit{lifting maps}). The fact that
the operators $\gv{\rho}$ and $\gv{\tau}$ are surjective is one of the main results
of Buffa et al.\cite{buffa2002} It allows for the derivation of Hodge decompositions,
which we will also use later on. The surface differential operators:
\begin{equation}
\tracediv:  \HRMinusDiv\to H^{-\frac{1}{2}}(\Gamma)
\qquad\text{and}\qquad
\tracecurl: \HTMinusCurl \to H^{-\frac{1}{2}}(\Gamma),
\end{equation}
used in those definitions are defined by duality for all $v\in H^{\frac{1}{2}}
(\Omega)$ by $\langle\tracediv\vv{u},v\rangle_\Gamma\coloneqq\langle\vv{u},
\tracegrad v\rangle_\Gamma$ and $\langle\tracecurl\vv{u},v\rangle_\Gamma\coloneqq
\langle\vv{u},\tracecurlvec v\rangle_\Gamma$. Here $\tracegrad$ and
$\tracecurlvec$ are suitable extensions of the ordinary trace gradient and trace
curl to operators with mappings $\tracegrad:H^{\frac{1}{2}}(\Gamma)\to
\HTMinusCurl$ and $\tracecurlvec:H^{\frac{1}{2}}(\Gamma)\to\HRMinusDiv$. These
operators also give rise to a duality between $\HRMinusDiv$ and $\HTMinusCurl$.

Finally, we will also make use of the Laplace--Beltrami operator $\tracelaplace:=
\tracediv\circ\,(\tracegrad)\equiv\tracecurl\circ\tracecurlvec$. This operator
is known to be coercive on $H^1(\Gamma)/\reals$, the space of $H^1(\Gamma)$-%
regular traces with zero average on each $\Gamma_i$, $i=0,\dotsc,\beta_2$.

\subsection{Geometry of the Domain}
Throughout this article, we suppose that the Lipschitz domain $\Omega\subset\wholespace$
of interest is bounded and connected, and we will sometimes refer to the Betti
numbers $\beta_1$ and $\beta_2$. These numbers are related to the topological properties
of the domain, see \Cref{fig:domain}{\kern-5pt}. $\beta_1$ is the genus of the domain,
in other words the number of \enquote{handles}. $\beta_2$ is the number of `holes'
$\Theta_i$, $i=1,\ldots,\beta_2$ in the domain. We define the \emph{exterior domain}
$\Theta_0$ via:
\begin{equation}
	\Theta_0\coloneqq\wholespace\setminus\overline{\Omega\cup\left(\bigcup_{i=1}^{\beta_2}\Theta_i\right)}.
\end{equation}
The domain's boundary thus always has $\beta_2+1$ connected components $\Gamma_i
\coloneqq\partial\Theta_i$, $i=0,\ldots,\beta_2$. A domain with $\beta_1=0$ is called
\emph{handle-free},\footnote{We avoid the term \enquote{simply-connected} as it
usually refers to \emph{homotopy} as opposed to \emph{homology}.}
\emph{hole-free} if $\beta_2=0$, and we say that the topology of $\Omega$ is
trivial or simple if $\beta_1=\beta_2=0$. We refer to Arnold and al. for
more details.\cite[Section~2]{arnold2006}

The geometric interpretation of the Betti numbers is best illustrated through an
example. In the domain depicted in \Cref{fig:domain}{\kern-5pt}, $\beta_2=3$: three
cube-like holes $\Theta_1$, $\Theta_2$, and $\Theta_3$ were cut out of the toroidal
volume. Their boundaries $\Gamma_1$, $\Gamma_2$, and $\Gamma_3$ are labelled in the
figure. Together with the exterior boundary $\Gamma_0$, the boundary $\Gamma\coloneqq
\partial\Omega=\Gamma_0\cup\Gamma_1\cup\Gamma_2\cup\Gamma_3$ thus has \emph{four} =
$\beta_2+1$ connected components.

On the one hand, the value of the second Betti number $\beta_2$ is relevant to
questions regarding the \emph{existence} results stated in \cref{item:existence}
and \cref{item:streamfct} of \Cref{sec: Introduction}. These existence theorems
will make use  of arbitrary but fixed functions $T_i\in C_0^\infty(\wholespace)$,
$i=0,\dotsc,\beta_2$, that act as indicators for the the connected components of
the boundary:
\begin{equation}\label{def: Ti}
	T_i =
	\begin{cases}
		1 & \text{in a neighbourhood of $\Gamma_i$,}\\
		0 & \text{in a neighbourhood of $\Gamma_j$, $j\in\lbrace 0,\ldots,\beta_2\rbrace\setminus\lbrace i\rbrace$}.
	\end{cases}
\end{equation}

On the other hand, the value of $\beta_1$ is crucial to the \emph{uniqueness} results
of \cref{uniqueness item}. For simplicity, we will restrict our attention to handle-free
domains ($\beta_1=0$), that is domains for which every loop inside $\Omega$ is the boundary
of a surface within $\Omega$. The domain in \Cref{fig:domain}{\kern-5pt} is \emph{not}
handle-free,  as the red loop is a representative of the equivalence class of
non-bounding cycles. In that example, $\beta_1=1$. Nevertheless, we will make some
remarks on what changes in the following results need to be anticipated in order
to recover uniqueness of solutions when $\beta_1>0$.

\begin{figure}
\centering
\includegraphics[width=0.4\textwidth]{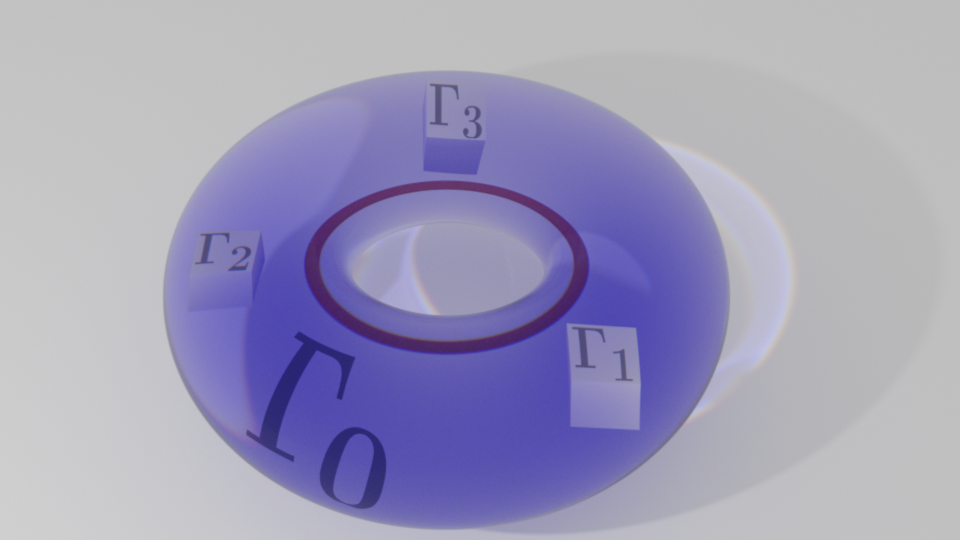}
\caption{\label{fig:domain}A ring-shaped domain with three cubical holes is
an example of a domain having non-trivial topology.\cite[Section~3]{amrouche1998}
There is one \enquote{handle} through it: $\beta_1=1$. The red line is a representative
of the equivalence class of non-bounding cycles. The three cubical inclusions
(\enquote{holes}) are not part of the domain: $\beta_2=3$. The boundary
$\Gamma$ has four connected components: $\Gamma_1,\Gamma_2,\Gamma_3$, and the
domain's exterior boundary $\Gamma_0$.}
\end{figure}

\subsection{Laplace and Newton Operator}
The scalar Laplacian $\Laplace\coloneqq\DIV\circ\,(\grad)$ is central to
potential theory, and we assume that the reader is well aware of the classical
existence and uniqueness results for boundary value problems related to this
operator. Its vector-valued analogue is defined component-wise:
$\Laplacevec\vv{V} \coloneqq (\Laplace V_1,\Laplace V_2,\Laplace V_3)^\top$ for
all $\vv{V}\in\distrisvec$. This operator is also known as Hodge--Laplacian and
can equivalently be written as:
\begin{equation}
\Laplacevec\vv{V} = \curl\curl\vv{V}-\gradp\DIV\vv{V},\qquad\forall\vv{V}\in\distrisvec.
\end{equation}

Let us denote by $G(\vv{x})\coloneqq(4\pi|\vv{x}|)^{-1}$ the fundamental solution
of the Laplacian $\Laplace$. The Newton operator is defined on the space of
compactly supported distributions $\compdistris$ via convolution as
$
\Newton: \compdistris\to\wholedistris$, $U\mapsto G\star U
$.
In other words:
\begin{equation}
\forall U\in\wholetestfcts:\quad
\bigl(\Newton U\bigr)(\vv{x}) \coloneqq
\frac{1}{4\pi}\intdx{\wholespace}{}{\frac{U(\vv{y})}{|\vv{x}-\vv{y}|}}{\vv{y}}
\qquad\in\wholesmoothfcts,
\end{equation}
while $\langle\Newton U, V\rangle \coloneqq \langle U,\Newton V\rangle$ for all
$U\in\compdistris$ and $V\in\wholetestfcts$. For a given $U\in\compdistris$, 
the distribution $\Newton U$ is called the \emph{Newton potential} of $U$.

This operator is an inverse for the Laplacian:\cite[Equations~(4.4.2)
and~(4.4.3)]{hoermander1990}
\begin{equation}\label{eq: Neuton is Laplacian inverse}
	\forall U\in\compdistris:\qquad \Laplace\Newton U = \Newton(\Laplace U) = U.
\end{equation}
Moreover, because it is an operator of convolutional type, it commutes with
differentiation.\cite[Equation~(4.2.5)]{hoermander1990} Application of this
operator always increases the Sobolev regularity of a distribution $U\in H^{s}%
(\wholespace)\cap\compdistris$ by two, i.\,e., $\Newton$ has the following
mapping property and is continuous:\cite[Theorem~3.1.2]{sauter2011}
\begin{equation}
	\Newton: H^s(\wholespace)\cap\compdistris\to H^{s+2}_{\mathrm{loc}}(\wholespace)
	\qquad s\in\mathbb{R}.
\end{equation}

The Newton potential $\Newton U$ of a compactly supported distribution
$U\in\compdistris$ is regular outside $\supp U$ and decays to zero at
infinity:\cite[Chapter~II, \S 3.1, Proposition~2]{dautray1990-i}
\begin{equation}
	\forall U\in\compdistris:\ 
	\bigl(\Newton U\bigr)(\vv{x})=\bigO{|\vv{x}|^{-1}}\qquad|\vv{x}|\to\infty.
\end{equation}
Even more importantly, the following result characterises the Newton potential.%
\cite[Chapter~II, \S 3.1, Proposition~3]{dautray1990-i}
\begin{lemma}\label{lem:newton-characterisation}
	Let $U\in\wholedistris$ and $F\in\compdistris$. Then $U$ is the Newton
	potential of $F$, that is $U=\Newton F$, if and only if:
	\begin{equation}
		\left\lbrace
		\begin{aligned}
			\Laplace U         &=   F &   &\text{on $\wholespace$,}\\
			U(\vv{x}) &\to 0 &   &\text{as $|\vv{x}|\to\infty$.}
		\end{aligned}
		\right.
	\end{equation}
\end{lemma}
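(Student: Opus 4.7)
The plan is to establish the two implications separately, pivoting the converse direction on a Liouville-type rigidity argument applied to the difference $U-\Newton F$.

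For the forward implication, assume $U=\Newton F$. The PDE $\Laplace U=F$ is immediate from the identity $\Laplace\Newton F=F$ recorded in \eqref{eq: Neuton is Laplacian inverse}, which holds because $F\in\compdistris$. The decay at infinity is exactly the pointwise estimate $(\Newton F)(\vv{x})=\bigO{|\vv{x}|^{-1}}$ as $|\vv{x}|\to\infty$ stated just above the lemma, which is meaningful since $\Newton F$ is smooth outside $\supp F$.

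For the converse, suppose $U\in\wholedistris$ satisfies $\Laplace U=F$ on $\wholespace$ and vanishes at infinity. The idea is to set
\begin{equation*}
W \coloneqq U - \Newton F \in \wholedistris,
\end{equation*}
and show that $W=0$. First, $\Laplace W = F - \Laplace\Newton F = 0$ on all of $\wholespace$, so $W$ is a harmonic distribution. By Weyl's lemma (elliptic regularity for the Laplacian), $W$ coincides with a $C^\infty$ harmonic function on $\wholespace$. Second, $W$ inherits decay at infinity: the hypothesis gives $U(\vv{x})\to 0$, and by the forward direction $\Newton F(\vv{x})\to 0$, hence $W(\vv{x})\to 0$ as $|\vv{x}|\to\infty$. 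A harmonic function that tends to zero at infinity is bounded, and Liouville's theorem forces it to be constant; the decay then pins the constant to $0$. Therefore $W\equiv 0$, i.e.\ $U=\Newton F$.

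The main subtlety is the passage from \emph{distributional} data to a \emph{pointwise} statement: one must use Weyl's lemma to upgrade the harmonic distribution $W$ to a smooth function before invoking Liouville, and one must interpret the decay hypothesis on $U$ in a way compatible with this upgrade. Since $\Newton F$ is already a smooth function outside $\supp F$ and $W=U-\Newton F$ is globally smooth by Weyl, the decay statement $U(\vv{x})\to 0$ is naturally read on this smooth representative, so the argument closes cleanly. No other deep ingredients are needed beyond the mapping and decay properties of $\Newton$ that are quoted just before the lemma.
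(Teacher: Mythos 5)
Your proof is correct. The paper does not prove this lemma itself but simply cites Dautray--Lions (Chapter~II, \S 3.1, Proposition~3) for it, and your argument --- reduce to $W=U-\Newton F$, upgrade the harmonic distribution to a smooth function via Weyl's lemma, and kill it with Liouville plus the decay at infinity --- is precisely the standard proof of that cited result, with the one genuine subtlety (making pointwise sense of the decay hypothesis for a distribution $U\in\wholedistris$ via elliptic regularity outside $\supp F$) handled correctly.
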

This characterisation allows for the derivation of representation formulæ for
solutions of the Laplace equation on bounded domains, leading to boundary integral 
equations. This will appear at the end of this section. All of these results
analogously hold for the vector-valued Newton operator $\NewtonVec$, which is
defined component-wise and distinguished by a bold-face font.

\subsection{Decompositions of Helmholtz--Hodge Type}
Decomposition theorems will play a central role in our analysis,
so we collect the most important results here.
\begin{lemma}[Helmholtz Decomposition]\label{lem:helmholtz-decomposition}
Every compactly supported distribution $\vv{U}\in\compdistrisvec$ can be
decomposed into a divergence-free and a curl-free part:
\begin{equation}\label{eqn:helmholtz-decomposition}
		\vv{U} = \curl\vv{A} - \gradp P,
\end{equation}
where $\vv{A}\coloneqq\NewtonVec\curl\vv{U}\in\wholedistrisvec$ and
	$P\coloneqq\Newton\DIV\vv{U}\in\wholedistris$.
\end{lemma}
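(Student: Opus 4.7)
The plan is to verify the decomposition by direct computation, leveraging the fact that the Newton operator inverts the Laplacian on compactly supported distributions and commutes with partial derivatives.

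First I would observe that since $\vv{U}\in\compdistrisvec$ is compactly supported, so are the distributions $\curl\vv{U}\in\compdistrisvec$ and $\DIV\vv{U}\in\compdistris$. Therefore $\vv{A}\coloneqq\NewtonVec\curl\vv{U}\in\wholedistrisvec$ and $P\coloneqq\Newton\DIV\vv{U}\in\wholedistris$ are well-defined, and by \eqref{eq: Neuton is Laplacian inverse} applied component-wise we have $\NewtonVec\Laplacevec\vv{U}=\vv{U}$.

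Next I would combine this identity with the Hodge--Laplacian formula recalled above,
\begin{equation*}
\Laplacevec\vv{U}=\curl\curl\vv{U}-\gradp\DIV\vv{U},
\end{equation*}
to write $\vv{U}=\NewtonVec(\curl\curl\vv{U})-\NewtonVec(\gradp\DIV\vv{U})$. Since $\NewtonVec$ is a convolution operator, it commutes with any partial differential operator with constant coefficients; in particular it commutes with $\curl$ and with $\gradp$ (the latter understood by pulling the scalar $\Newton$ out of each component). Hence
\begin{equation*}
\NewtonVec(\curl\curl\vv{U})=\curl\bigl(\NewtonVec\curl\vv{U}\bigr)=\curl\vv{A},
\quad
\NewtonVec(\gradp\DIV\vv{U})=\gradp\bigl(\Newton\DIV\vv{U}\bigr)=\gradp P,
\end{equation*}
which yields \eqref{eqn:helmholtz-decomposition}.

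The argument is essentially a two-line manipulation, so there is no substantial obstacle; the only point requiring care is the justification of interchanging $\NewtonVec$ with the differential operators at the level of distributions. I would invoke the general commutation property of convolution with distributional differentiation cited earlier \cite[Equation~(4.2.5)]{hoermander1990}, which applies precisely because $\curl\vv{U}$ and $\DIV\vv{U}$ are compactly supported so that the convolutions with $G$ are unambiguously defined on all of $\wholespace$.
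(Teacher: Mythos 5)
Your proposal is correct and is essentially the paper's own one-line proof: the paper writes $\vv{U}=\Laplacevec\NewtonVec\vv{U}=\curl(\curl\NewtonVec\vv{U})-\gradp(\DIV\NewtonVec\vv{U})$ and then commutes $\NewtonVec$ with $\curl$ and $\DIV$, whereas you apply the Hodge--Laplacian identity to $\vv{U}$ first and commute afterwards --- the same ingredients in a swapped order. Your extra remarks on well-definedness and the justification of the commutation are sound and merely make explicit what the paper leaves implicit.
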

\begin{proof}
\begin{math}
\vv{U} = \Laplacevec\NewtonVec\vv{U}
       = \curl(\curl\NewtonVec\vv{U}) -\gradp(\DIV\NewtonVec\vv{U})
       = \curl(\NewtonVec\curl\vv{U}) -\gradp(\Newton\DIV\vv{U}).
\end{math}
\end{proof}

Note that in this decomposition neither $\curl\vv{A}$ nor $\grad p$ are necessarily
compactly supported. This makes the following result useful, for which we refer
to the works of Bramble and Pasciak\cite[Proposition~3.2]{bramble2004},
Pasciak and Zhao\cite[Lemma~2.2]{pasciak2002}, as well as Hiptmair and Pechstein%
\cite[Remark~3]{hiptmair2019}.
\begin{lemma}\label{lem:helmholtz-HOcurl}
Let $\Omega\subset\wholespace$ be a bounded Lipschitz domain. Then any $\vv{U}%
\in\HOcurl$ can be decomposed as:
\begin{equation}
\vv{U} = \vv{W}-\gradp P,
\end{equation}
for some $P\in\HOne$, where $\vv{W}\in\HOneZeroVec$ satisfies:
$\Vert\vv{W}\Vert_{\HOneVec}\leq C\Vert\curl\vv{U}\Vert_{\LebVec}.$
\end{lemma}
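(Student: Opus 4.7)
I would combine the whole-space Helmholtz decomposition of the zero-extension of $\vv{U}$ with a boundary correction by the gradient of an $H^2$-regular scalar. Since $\vv{U}\in\HOcurl$, its zero-extension $\widetilde{\vv{U}}$ lies in $\Hcurlwhole$ and is compactly supported, with $\curl\widetilde{\vv{U}}=\widetilde{\curl\vv{U}}\in\vv{L}^2(\wholespace)\cap\compdistrisvec$. Applying \Cref{lem:helmholtz-decomposition} then yields
\[
\widetilde{\vv{U}} \;=\; \vv{Z} - \gradp Q_0,\qquad
\vv{Z}\coloneqq\curl\NewtonVec\bigl(\curl\widetilde{\vv{U}}\bigr),\qquad Q_0\coloneqq\Newton\bigl(\DIV\widetilde{\vv{U}}\bigr),
\]
and the smoothing $\NewtonVec:\vv{L}^2(\wholespace)\cap\compdistrisvec\to\vv{H}^2_{\mathrm{loc}}(\wholespace)$ gives $\vv{Z}\in\vv{H}^1_{\mathrm{loc}}(\wholespace)$ with the decisive bound $\|\vv{Z}\|_{\HOneVec(\Omega)}\le C\|\curl\vv{U}\|_{\LebVec}$. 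Restricting to $\Omega$ yields $\vv{U}=\vv{Z}|_\Omega-\gradp Q_0|_\Omega$, but $\vv{Z}|_\Omega\notin\HOneZeroVec$ in general, so a correction is required.

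To cancel the trace, I would exploit that $\widetilde{\vv{U}}\equiv 0$ outside $\overline\Omega$, so $\vv{Z}=\gradp Q_0$ in the exterior; the continuity of $\vv{Z}$ across $\Gamma$ then gives $\gv{\tau}\vv{Z}=\tracegrad(Q_0|_\Gamma^+)$, which is in particular surface-exact. I would invert $\tracegrad g=\gv{\tau}\vv{Z}$ for $g\in H^{3/2}(\Gamma)$ normalised to have zero mean on each connected component $\Gamma_i$, so that a surface Poincar\'e inequality gives $\|g\|_{H^{3/2}(\Gamma)}\le C\|\vv{Z}\|_{\HOneVec}$. A continuous lifting of Cauchy data $H^{3/2}(\Gamma)\times H^{1/2}(\Gamma)\to H^2(\Omega)$ then produces $\phi\in H^2(\Omega)$ with $\gamma\phi=g$ and $\nu\gradp\phi=\nu\vv{Z}$, so that $\gradp\phi|_\Gamma=\vv{Z}|_\Gamma$ and $\|\phi\|_{H^2(\Omega)}\le C\|\curl\vv{U}\|_{\LebVec}$. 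Setting
\[
\vv{W}\;\coloneqq\;\vv{Z}|_\Omega-\gradp\phi,\qquad P\;\coloneqq\;Q_0|_\Omega-\phi,
\]
one directly verifies $\vv{W}\in\HOneZeroVec$ (its trace cancels by construction), $P\in\HOne$, and $\vv{W}-\gradp P=\vv{Z}|_\Omega-\gradp Q_0|_\Omega=\vv{U}$, with the required estimate on $\vv{W}$.

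The crux is the continuous $H^2$-lifting from Cauchy data on a merely Lipschitz boundary, whose norm must depend only on $\|\curl\vv{U}\|$. This is why $g$ is obtained by mean-zero inversion of $\tracegrad$ rather than by using $Q_0|_\Gamma^+$ directly: the $L^2$-norm of the latter would encode $\DIV\widetilde{\vv{U}}$, and hence $\vv{U}$ itself, polluting the estimate with $\|\vv{U}\|_{\LebVec}$. On Lipschitz (as opposed to $C^{1,1}$) domains the requisite $H^2$-trace/lifting theory is a classical but delicate point; alternative proofs in Bramble--Pasciak and Hiptmair--Pechstein bypass this surface construction by solving a vector Poisson problem on an enclosing ball, which is less transparent but sidesteps Lipschitz $H^2$-regularity.
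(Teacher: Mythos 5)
The paper does not prove this lemma itself; it cites Bramble--Pasciak, Pasciak--Zhao and Hiptmair--Pechstein. The first half of your argument is exactly the standard one from those references and is correct: the zero extension $\widetilde{\vv{U}}$ of $\vv{U}\in\HOcurl$ lies in $\Hcurlwhole$ with $\curl\widetilde{\vv{U}}\in\vv{L}^2(\wholespace)\cap\compdistrisvec$, the whole-space Helmholtz decomposition gives $\widetilde{\vv{U}}=\vv{Z}-\gradp Q_0$ with $\vv{Z}=\curl\NewtonVec\curl\widetilde{\vv{U}}\in\HlocVec$ and $\Vert\vv{Z}\Vert_{\HOneVec}\leq C\Vert\curl\vv{U}\Vert_{\LebVec}$, and $\vv{Z}=\gradp Q_0$ outside $\overline{\Omega}$.

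The gap is in your boundary correction. You invoke a continuous Cauchy-data lifting $H^{\frac{3}{2}}(\Gamma)\times H^{\frac{1}{2}}(\Gamma)\to H^2(\Omega)$ prescribing $\gamma\phi$ and $\nu\gradp\phi$ separately. On a merely Lipschitz boundary this object does not exist in the form you use it: $H^{\frac{3}{2}}(\Gamma)$ is not invariantly defined there, the normal and tangential parts of $\gradp\phi|_\Gamma$ cannot be decoupled (the outward normal is only $L^\infty$), and the correct trace theorem for $H^2(\Omega)$ prescribes the \emph{pair} $(\gamma\phi,\gradp\phi|_\Gamma)\in H^1(\Gamma)\times\vv{H}^{\frac{1}{2}}(\Gamma)$ subject to the compatibility $\tracegrad\gamma\phi=\gv{\tau}(\gradp\phi|_\Gamma)$ --- a statement whose surjectivity with bounded right inverse is itself a delicate result, arguably harder than the lemma. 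You flag this as ``the crux'' but do not resolve it, so the proof as written is incomplete at its central step. The standard way to close this gap, and what the cited references actually do, stays entirely in the volume: since $\gradp Q_0=\vv{Z}\in\vv{H}^1$ on each exterior component $\Theta_i$ (intersected with a large ball $B\supset\overline{\Omega}$), one has $Q_0\in H^2(\Theta_i\cap B)$ with norm controlled, after subtracting a constant per component, by $\Vert\vv{Z}\Vert_{\vv{H}^1(B)}$ via Poincar\'e; the Stein--Calder\'on extension operator (valid for Lipschitz domains and bounded on $H^2$) then produces $\phi\in H^2(B)$ agreeing with $Q_0$ on $B\setminus\overline{\Omega}$, so that $\vv{Z}-\gradp\phi\in\vv{H}^1(B)$ vanishes identically outside $\Omega$ and hence restricts to an element of $\HOneZeroVec$, with $P\coloneqq(Q_0-\phi)|_\Omega\in\HOne$. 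This route needs no characterisation of $H^2$ boundary traces at all. I would also note that your bound $\Vert g\Vert_{H^{\frac{3}{2}}(\Gamma)}\leq C\Vert\vv{Z}\Vert_{\HOneVec}$ by ``surface Poincar\'e'' presupposes inverting $\tracegrad$ into $H^{\frac{3}{2}}(\Gamma)$, which suffers from the same Lipschitz-boundary problem.
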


Finally, the tangential trace spaces also allow similar decompositions.
We only require the result for $\HRMinusDiv$.
\begin{lemma}[Hodge Decomposition~{\cite[Theorem~5.5]{buffa2002}}]\label{lem:trace-hodge-decomposition}
Let $\Gamma=\partial\Omega$ be the boundary of a handle-free, bounded Lipschitz
domain $\Omega\subset\wholespace$. Then any $\vv{s}\in\HRMinusDiv$ can be
uniquely decomposed as:
\begin{equation}
\vv{s} = \tracecurlvec p - \tracegradp q,
\end{equation}
where $p\in H^{\frac{1}{2}}(\Gamma)/\reals$ and $q\in H^1(\Gamma)/\reals$ with
$\tracelaplace q\in H^{-\frac{1}{2}}(\Gamma)$ are uniquely determined up to a
constant on each connected component $\Gamma_i$, $i=0,\dotsc,\beta_2$ of the
boundary.
\end{lemma}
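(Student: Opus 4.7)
The plan is to reduce the decomposition to two scalar surface Laplace--Beltrami problems, using the identities $\tracediv\tracecurlvec=0$ and $\tracecurl\tracegradp=0$, and then to invoke the handle-free hypothesis only for the final Poincaré-type step.

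\emph{Existence.} Applying $\tracediv$ to the ansatz $\vv{s}=\tracecurlvec p-\tracegradp q$ yields $\tracediv\vv{s}=-\tracelaplace q$. The first step is therefore to solve this surface Poisson problem on $H^1(\Gamma)/\reals$ (constants per connected component $\Gamma_i$). Solvability hinges on the compatibility conditions
\[
\langle\tracediv\vv{s},T_i|_\Gamma\rangle_\Gamma=\langle\vv{s},\tracegradp(T_i|_\Gamma)\rangle_\Gamma=0,\qquad i=0,\dotsc,\beta_2,
\]
which hold for free because each $T_i|_\Gamma$ is locally constant on $\Gamma$, so $\tracegradp(T_i|_\Gamma)=0$. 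The coercivity of $-\tracelaplace$ on $H^1(\Gamma)/\reals$ (noted in the excerpt) then produces a unique $q$ modulo the per-component constants, and the right-hand side $\tracediv\vv{s}\in H^{-\frac{1}{2}}(\Gamma)$ forces $\tracelaplace q\in H^{-\frac{1}{2}}(\Gamma)$.

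Next I would set $\vv{t}\coloneqq\vv{s}+\tracegradp q\in\HRMinusDiv$, which by construction satisfies $\tracediv\vv{t}=0$, and use the handle-free hypothesis to produce $p\in H^{\frac{1}{2}}(\Gamma)/\reals$ with $\tracecurlvec p=\vv{t}$. Concretely, I would extend $\vv{t}$ to a bulk field $\vv{T}\in\Hcurl$ via the lifting inverse to $\gv{\rho}$, apply the bulk Helmholtz decomposition (\Cref{lem:helmholtz-decomposition} together with \Cref{lem:helmholtz-HOcurl}) to write $\vv{T}$ as a curl plus a gradient on $\Omega$, and then take traces, observing that the surface-divergence-free condition on $\vv{t}$ eliminates the gradient contribution up to a cohomology class whose vanishing is equivalent to $\beta_1(\Omega)=0$. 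This realises $\vv{t}$ as $\tracecurlvec p$ with $p$ the tangential scalar potential of the bulk stream function, giving $p\in H^{\frac{1}{2}}(\Gamma)/\reals$ by the mapping properties of the traces.

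\emph{Uniqueness.} Suppose $\tracecurlvec p-\tracegradp q=\vv{0}$. Applying $\tracediv$ gives $\tracelaplace q=0$, so $q$ is constant on each $\Gamma_i$; then $\tracecurlvec p=\vv{0}$ forces $p$ to be locally constant as well, and hence constant on each $\Gamma_i$. This establishes the claimed uniqueness modulo a constant per connected component.

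\emph{Main obstacle.} The delicate step is the second one: on a Lipschitz (hence non-smooth) closed surface $\Gamma$, a surface-divergence-free element of the low-regularity space $\HRMinusDiv$ must be exhibited as a surface curl of an $H^{\frac{1}{2}}$-scalar, and this is precisely where the handle-free assumption enters. The Poincaré lemma on smooth surfaces is classical, but transferring it to the Lipschitz setting requires either the Hodge decomposition machinery of Buffa--Ciarlet--Costabel for trace spaces or, as I prefer above, a lift-and-decompose argument in the bulk where \Cref{lem:helmholtz-HOcurl} is available. Controlling the cohomological obstruction and the regularity of the resulting scalar potential is the nontrivial content.
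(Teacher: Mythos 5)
First, a contextual point: the paper does not prove this lemma at all---it is imported verbatim from Buffa et al.\ [Theorem~5.5]---so there is no in-paper proof to compare against. Judged on its own terms, your outline gets the easy parts right and defers the hard part. The construction of $q$ by solving the Laplace--Beltrami problem on $H^1(\Gamma)/\reals$, the verification of the compatibility conditions via $\tracegradp(T_i|_\Gamma)=\gv{0}$, and the uniqueness argument (apply $\tracediv$, kill $q$, then kill $p$) are correct and are indeed how one starts; only the sign in $\tracediv\vv{s}=-\tracelaplace q$ is off under the paper's convention $\tracelaplace=\tracediv\circ(\tracegrad)$, which is harmless.

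The genuine gap is the second step: exhibiting a surface-divergence-free $\vv{t}\in\HRMinusDiv$ as $\tracecurlvec p$ with $p\in H^{\frac{1}{2}}(\Gamma)$. Your lift-and-decompose plan does not go through with the tools you cite. If $\vv{T}\in\Hcurl$ is a lifting with $\gv{\rho}\vv{T}=\vv{t}$, then $\tracediv\vv{t}=0$ translates into $\nu(\curl\vv{T})=0$, and what you actually need is a vector potential $\vv{A}$ for $\curl\vv{T}$ with vanishing \emph{tangential} trace, $\vv{A}\in\HOcurl$, so that $\vv{T}-\vv{A}$ is curl-free in $\Omega$, hence (handle-free) a gradient $\nabla P$, and $\vv{t}=\gv{\rho}(\nabla P)=\pm\tracecurlvec(\gamma P)$. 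But \Cref{lem:helmholtz-HOcurl} applies only to fields already in $\HOcurl$ (which $\vv{T}$ is not unless $\vv{t}=\gv{0}$), and \Cref{lem:helmholtz-decomposition} is a whole-space decomposition that gives no control on the tangential trace of the resulting potential, so neither "eliminates the gradient contribution" in the way you describe. The missing ingredient is precisely the existence of the normal vector potential $\vv{A}_{\mathrm{N}}\in\HOcurl$ for divergence-free fields with zero normal trace (Amrouche et al., Girault--Raviart), a result of comparable depth to the lemma being proven; and your fallback to "the Hodge decomposition machinery of Buffa--Ciarlet--Costabel" amounts to citing the theorem in order to prove it. A complete argument must also show that the resulting $p$ lands in $H^{\frac{1}{2}}(\Gamma)$ and that $\ker(\tracecurlvec)$ consists exactly of the locally constant functions on a merely Lipschitz, genus-zero surface---both nontrivial in these low-regularity trace spaces and both part of the actual content of the cited Theorem~5.5.
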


\subsection{Trace Jumps and a Representation Formula}
The trace operators introduced in the previous sections were all defined with
respect to the domain $\Omega$. One can instead also consider the corresponding
traces with respect to the complementary domain
$\Omega^C\coloneqq\wholespace\setminus\overline{\Omega}$. These
\emph{one-sided} traces exist whenever the restriction of a
vector field $\vv{U}\in L^2_{\mathrm{loc}}(\wholespace)$ to the domains
$\Omega$ and $\Omega^C$ is sufficiently smooth. If $\vv{U}$ is smooth
\emph{across $\Gamma$}, then the one-sided traces coincide. Otherwise the
difference of these traces is denoted by the jump operator $\llbracket\cdot\rrbracket$.
For example, for $\gv{\rho}$ one  writes: $\llbracket\gv{\rho}\vv{U}\rrbracket
\coloneqq \gv{\rho}\vv{U}|_{\Omega} - \gv{\rho}^C \vv{U}|_{\Omega^C} $. The importance
of the jump operator lies in the following representation formula.
\begin{lemma}[{Representation Formula \cite[Sec. 4.2]{claeys2019}}]\label{lem:representation-formula}
Let $\vv{U}\in\HlocVec$ fulfil:
\begin{equation}
\left\lbrace
\begin{aligned}
\Laplacevec\vv{U} &=  \gv{0}   & &\text{in $\wholespace\setminus\Gamma$},\\
\DIV\vv{U}        &=  0        & &\text{in $\wholespace$},\\
\vv{U}(\vv{x})    &\to \gv{0}  & &\text{as $|\vv{x}|\to\infty$}.
\end{aligned}
\right.
\end{equation}
Then $\vv{U}=\NewtonVec(\Laplacevec\vv{U})$ and $\Laplacevec\vv{U}=%
\gv{\tau}'\llbracket\gv{\rho}\curl{\vv{U}}\rrbracket$.
\end{lemma}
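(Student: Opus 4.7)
The plan is to establish the two claims separately. The first identity, $\vv{U}=\NewtonVec(\Laplacevec\vv{U})$, follows directly from \Cref{lem:newton-characterisation} applied component-wise: since $\Laplacevec\vv{U}=\gv{0}$ away from the compact set $\Gamma$, the distribution $\Laplacevec\vv{U}$ has support in $\Gamma$ and therefore lies in $\compdistrisvec$; combined with the hypothesis $\vv{U}(\vv{x})\to\gv{0}$ at infinity, the characterisation lemma yields the first claim at once.

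For the second identity, I would exploit the hypothesis $\DIV\vv{U}=0$ in $\wholespace$ to reduce $\Laplacevec\vv{U}=\curl\curl\vv{U}-\gradp\DIV\vv{U}$ to $\Laplacevec\vv{U}=\curl\curl\vv{U}$, and then test against an arbitrary $\vv{V}\in\wholetestfctsvec$. Pairing yields
$$\langle\Laplacevec\vv{U},\vv{V}\rangle=\int_{\wholespace}\curl\vv{U}\cdot\curl\vv{V}\,\dd\vv{x},$$
which I would split into contributions from $\Omega$ and from $\Omega^C$. On each open subdomain the hypotheses give $\curl\curl\vv{U}=\gv{0}$, so the vector Green identity $\int\curl\vv{w}\cdot\vv{v}-\int\vv{w}\cdot\curl\vv{v}=\int(\vv{n}\times\vv{w})\cdot\vv{v}\,\dd S$ applied with $\vv{w}=\curl\vv{U}$ converts each volume integral into a single boundary term on $\Gamma$. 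The unboundedness of $\Omega^C$ is harmless: by truncating to a ball $B_R$ containing $\supp\vv{V}\cup\overline{\Omega}$, the contribution on $\partial B_R$ vanishes because $\vv{V}$ does there.

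Summing the two resulting boundary integrals, and using $\vv{u}\times\vv{n}=-\vv{n}\times\vv{u}$ to match the sign convention $\gv{\rho}\vv{u}=\vv{u}|_\Gamma\times\vv{n}$, I expect to arrive at
$$\langle\Laplacevec\vv{U},\vv{V}\rangle=\int_\Gamma\llbracket\gv{\rho}\curl\vv{U}\rrbracket\cdot\vv{V}\,\dd S.$$
Because $\llbracket\gv{\rho}\curl\vv{U}\rrbracket$ is tangential, replacing $\vv{V}$ by $\gv{\tau}\vv{V}$ in this pairing leaves its value unchanged, so the right-hand side equals $\langle\gv{\tau}'\llbracket\gv{\rho}\curl\vv{U}\rrbracket,\vv{V}\rangle$ by definition of the adjoint trace, proving the distributional identity.

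The main technical point will be justifying that the one-sided traces $\gv{\rho}\curl\vv{U}|_\Omega$ and $\gv{\rho}^C\curl\vv{U}|_{\Omega^C}$ are genuine elements of $\HRMinusDiv$, so that the jump, and its pairing with $\gv{\tau}'$, are rigorously defined. This is however immediate: in each open subdomain one has $\curl\curl\vv{U}=\gv{0}$, whence $\curl\vv{U}$ is locally $\vv{H}(\curl)$-regular on either side of $\Gamma$, and the surjective tangential trace mappings of Buffa et al. apply on each side.
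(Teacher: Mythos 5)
Your proposal is correct and follows essentially the same route as the paper: the first identity via \Cref{lem:newton-characterisation} (noting $\supp\Laplacevec\vv{U}\subset\Gamma$), and the second by using $\DIV\vv{U}=0$ to write $\Laplacevec\vv{U}=\curl\curl\vv{U}$, testing against $\vv{V}\in\wholetestfctsvec$, splitting the pairing $\langle\curl\vv{U},\curl\vv{V}\rangle$ over $\Omega$ and $\Omega^C$, and applying on each side the Green identity that defines the duality of $\gv{\rho}$ with $\gv{\tau}$, using that $\llbracket\gv{\tau}\vv{V}\rrbracket=\gv{0}$. Your additional remarks on truncating $\Omega^C$ to a ball and on the one-sided traces lying in $\HRMinusDiv$ are sound technical points that the paper leaves implicit.
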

\begin{proof}
The fact that $\vv{U}=\NewtonVec(\Laplacevec\vv{U})$ directly follows
from \Cref{lem:newton-characterisation}. Because $\DIV\vv{U}=0$ globally, we
have $\Laplacevec\vv{U} = \curl(\curl\vv{U})$ and
$\curl(\curl{\vv{U}})|_{\wholespace\setminus\Gamma}=\gv{0}$. We thus obtain
for all $\vv{V}\in\wholetestfctsvec$:
\begin{equation}
\langle\Laplacevec\vv{U},\vv{V}\rangle =
\langle\curl\vv{U},\curl\vv{V}\rangle =
\int_{\Omega}\curl\vv{U}\cdot\,\curl\vv{V}\,{\mathrm d}\vv{x} +
\int_{\wholespace\setminus\overline{\Omega}}\curl\vv{U}\cdot\,\curl\vv{V}\,{\mathrm d}\vv{x},
\end{equation}
where we used that $\curl\vv{U}\in\vv{L}_{\text{loc}}^2(\wholespace)$ since
$\vv{U}\in\HlocVec$. Now, by definition of the rotated tangential trace:
\begin{equation}
\langle\gv{\rho}\curl\vv{U},\gv{\tau}\vv{V}\rangle_\Gamma =
\int_{\Omega}\curl\vv{U}\cdot\,\curl\vv{V} -
\underbrace{\curl(\curl\vv{U})}_{=\gv{0}}\cdot\vv{V}\,{\mathrm d}\vv{x} =
\int_{\Omega}\curl\vv{U}\cdot\,\curl\vv{V}\,{\mathrm d}\vv{x}.
\end{equation}
Applying the same methodology to the integral over $\wholespace\setminus
\overline{\Omega}$ and using the definition of $\gv{\rho}^C$ yields the
desired result, because the fact that $\vv{V}$ is smooth across $\Gamma$
guarantees that $\llbracket\gv{\tau}\vv{V}\rrbracket=\gv{0}$.
\end{proof}

\section{Velocity Fields}
In this section we prove the existence of velocity fields solving
\eqref{eqn:velocity-recovery} as claimed in \Cref{item:existence}. The abstract
integrability condition is reformulated in \Cref{lem:alternative-conditions}. The
uniqueness result for velocity fields presented in \Cref{uniqueness item} is also
covered.

\subsection{Existence of Velocity Fields}
\begin{theorem}\label{thm:existence of solutions}
Suppose that $\Omega\subset\wholespace$ is a bounded Lipschitz domain. The div-%
curl system
\begin{equation}\label{eqn:div-curl-system}
\left\lbrace
\begin{aligned}
\curl \mathbf{U} &= \vv{F}\\
\DIV  \mathbf{U} &= 0
\end{aligned}
\right.\qquad\text{in }\Omega
\end{equation}
has a solution $\vv{U}\in\LebVec$ if and only if $\vv{F}$ lies in the space
$\HOneDualVec$ and fulfils the following integrability condition:
\begin{equation}\label{eqn:integrability condition}
\langle\vv{F},\vv{V}\rangle = 0 \qquad\forall\vv{V}\in\ker\curl\big|_{\HOneZeroVec}.
\end{equation}
\end{theorem}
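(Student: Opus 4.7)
The strategy is to handle necessity by a density argument and sufficiency by representing a functional built from $\vv{F}$ on the image of $\curl$, using Lemma~\ref{lem:helmholtz-HOcurl} as the crucial gauge-fixing tool. For necessity, assume $\vv{U}\in\LebVec$ solves the system; pairing the distributional identity $\curl\vv{U}=\vv{F}$ with $\vv{V}\in\testfctsvec$ yields $\langle\vv{F},\vv{V}\rangle=\int_\Omega\vv{U}\cdot\curl\vv{V}\,\dd\vv{x}$, which is bounded by $\|\vv{U}\|_{\LebVec}\|\vv{V}\|_{\HOneVec}$. Since $\testfctsvec$ is dense in $\HOneZeroVec$, this extends $\vv{F}$ continuously to $\HOneZeroVec$, so $\vv{F}\in\HOneDualVec$, and the integrability condition is immediate because $\curl\vv{V}=\gv{0}$ kills the right-hand side.

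For sufficiency, on the subspace $\curl(\HOneZeroVec)\subset\LebVec$ I define the linear functional $\ell(\curl\vv{V}):=\langle\vv{F},\vv{V}\rangle$. The integrability condition is precisely what makes $\ell$ well-defined: if $\curl\vv{V}_1=\curl\vv{V}_2$ with $\vv{V}_i\in\HOneZeroVec$, then $\vv{V}_1-\vv{V}_2\in\HOneZeroVec\cap\ker\curl$, so $\langle\vv{F},\vv{V}_1-\vv{V}_2\rangle=0$. The crucial technical ingredient is that $\ell$ is continuous with respect to the $\LebVec$-norm on its domain: given $\vv{V}\in\HOneZeroVec\subset\HOcurl$, Lemma~\ref{lem:helmholtz-HOcurl} supplies a decomposition $\vv{V}=\vv{W}-\gradp P$ with $\vv{W}\in\HOneZeroVec$, $\curl\vv{W}=\curl\vv{V}$, and $\|\vv{W}\|_{\HOneVec}\le C\|\curl\vv{V}\|_{\LebVec}$. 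Since well-definedness forces $\ell(\curl\vv{V})=\langle\vv{F},\vv{W}\rangle$, we obtain $|\ell(\curl\vv{V})|\le C\|\vv{F}\|_{\HOneDualVec}\|\curl\vv{V}\|_{\LebVec}$.

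Extending $\ell$ by density to the closed subspace $\overline{\curl(\HOneZeroVec)}^{\LebVec}$ and applying the Riesz representation theorem on that Hilbert subspace yields $\vv{U}\in\overline{\curl(\HOneZeroVec)}^{\LebVec}$ with $\int_\Omega\vv{U}\cdot\vv{Z}\,\dd\vv{x}=\ell(\vv{Z})$ on the closure. Testing with $\vv{Z}=\curl\vv{V}$ for $\vv{V}\in\testfctsvec$ gives $\curl\vv{U}=\vv{F}$ in $\distrisvec$. Because $\DIV\colon\LebVec\to H^{-1}(\Omega)$ is continuous and vanishes on all of $\curl(\HOneZeroVec)$, the $\LebVec$-limit $\vv{U}$ automatically satisfies $\DIV\vv{U}=0$. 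The main obstacle is the $\LebVec$-continuity of $\ell$: without the gauge-fixed representative from Lemma~\ref{lem:helmholtz-HOcurl}, one could only control $\ell(\curl\vv{V})$ by $\|\vv{V}\|_{\HOneVec}$, which is too weak to invoke the Riesz theorem in $\LebVec$; the integrability condition handles the complementary well-definedness issue, and divergence-freeness is a free consequence of landing inside $\overline{\curl(\HOneZeroVec)}^{\LebVec}$.
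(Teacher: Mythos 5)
Your proof is correct, but it takes a genuinely different route from the paper's. The paper factors the argument into two lemmas: it first shows (\Cref{lmm:identification of spaces}) that the hypotheses on $\vv{F}$ let one reinterpret $\vv{F}$ as an element of $\HOcurldual$ annihilating $\ker\curl\big|_{\HOcurl}$, then invokes the closed range of $\curl\big|_{\HOcurl}:\HOcurl\to\LebVec$ and Banach's closed range theorem to identify that annihilator with the range of $\curl\big|_{\LebVec}$ (\Cref{lmm:plain-existence}); this produces some $\vv{W}\in\LebVec$ with $\curl\vv{W}=\vv{F}$, and a final correction $\vv{U}=\vv{W}+\gradp P$ with $P\in\HOneZero$ solving a Poisson problem restores $\DIV\vv{U}=0$. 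You instead run the duality by hand: you define $\ell(\curl\vv{V})\coloneqq\langle\vv{F},\vv{V}\rangle$ on $\curl(\HOneZeroVec)$, use the integrability condition for well-definedness, use \Cref{lem:helmholtz-HOcurl} for the $\LebVec$-continuity of $\ell$, and apply Riesz representation on $\overline{\curl(\HOneZeroVec)}^{\LebVec}$. Both arguments hinge on exactly the same gauge-fixing estimate from \Cref{lem:helmholtz-HOcurl} (the paper uses it inside \Cref{lmm:identification of spaces}), so the analytic content is the same; your version is more self-contained in that it avoids citing the closed-range property of $\curl\big|_{\HOcurl}$ and dispenses with the final Poisson correction, since landing in $\overline{\curl(\HOneZeroVec)}^{\LebVec}$ makes $\DIV\vv{U}=0$ automatic and even yields the a priori bound $\Vert\vv{U}\Vert_{\LebVec}\lesssim\Vert\vv{F}\Vert_{\HOneDualVec}$ for free. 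What it does \emph{not} deliver is the norm-equivalent identification $\vv{F}\in\HOcurldual$ of \Cref{lmm:identification of spaces}, which the paper reuses later in the construction of \Cref{sec:construction}; so the paper's detour is not wasted effort in the larger scheme of the article.
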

\begin{remark}
A more general result was proven by Bramble and Pasciak,\cite[Theorem~4.1]{bramble2004}
using entirely different techniques. We follow another route that sheds new light
on the classical conditions imposed on $\vv{F}$ and that helps in the construction
of vector potentials later.
\end{remark}
\begin{remark}
The condition $\vv{F}\in\HOneDualVec$ might seem unnatural, because for an
arbitrary vector-field $\vv{U}\in\LebVec$ it holds that
\begin{equation}\label{eqn:curlhatbound}
\forall\vv{V}\in\testfctsvec:\quad
\langle\curl\vv{U},\vv{V}\rangle =
\intdx{\Omega}{}{\vv{U}\cdot\curl\vv{V}}{\vv{x}}\leq
\Vert\vv{U}\Vert_{\vv{L}^2(\Omega)}\Vert\vv{V}\Vert_{\vv{H}(\curl;\Omega)}.
\end{equation}
The distribution $\curl\vv{U}\in\distrisvec$ thus admits a unique continuous
extension to $\overline{\testfctsvec}^{\Hcurl}=\HOcurl$ and the associated
operator 
\begin{equation}\label{eq: curl restricted to L2}
\curl\big\vert_{\vv{L}^2(\Omega)}:\LebVec\to\HOcurldual
\end{equation}
is continuous. On the one hand, any solution $\vv{U}\in\LebVec$ of
\eqref{eqn:div-curl-system} must therefore also necessarily fulfil
$\vv{F}=\curl\vv{U}\in\HOcurldual$. On the other hand, we have the
\emph{proper} inclusion $\HOcurldual\subsetneq\HOneDualVec$. The
following result resolves this issue.
\end{remark}
\begin{lemma}\label{lmm:identification of spaces}
The two spaces:
\begin{alignat*}{3}
\biggl\lbrace\vv{F} &\in \HOcurldual  &\bigg|\ \langle\vv{F},\vv{V}\rangle &= 0
&\quad &\forall\vv{V}\in\ker\curl\big|_{\HOcurl}  \biggr\rbrace\\
\shortintertext{and}
\biggl\lbrace\vv{F} &\in \HOneDualVec &\bigg|\ \langle\vv{F},\vv{V}\rangle &= 0
&\quad &\forall\vv{V}\in\ker\curl\big|_{\HOneZeroVec} \biggr\rbrace
\end{alignat*}
coincide with equivalent norms: $\Vert\vv{F}\Vert_{\HOneDualVec} \leq
\Vert\vv{F}\Vert_{\HOcurldual} \leq C\Vert\vv{F}\Vert_{\HOneDualVec}$.
\end{lemma}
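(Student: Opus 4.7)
The plan is to exhibit a bounded isomorphism between the two spaces by restriction from $\HOcurl$ to $\HOneZeroVec$, and to construct its inverse by means of the Helmholtz decomposition of \Cref{lem:helmholtz-HOcurl}.

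First, I would observe that $\HOneZeroVec\hookrightarrow\HOcurl$ continuously, since $\Vert\cdot\Vert_{\Hcurl}\leq\Vert\cdot\Vert_{\HOneVec}$, and densely, since $\testfctsvec$ is dense in $\HOcurl$ by definition and is contained in $\HOneZeroVec$. Consequently, the restriction map $\vv{F}\mapsto\vv{F}|_{\HOneZeroVec}$ is an injective contraction from $\HOcurldual$ into $\HOneDualVec$, yielding the easy inequality $\Vert\vv{F}\Vert_{\HOneDualVec}\leq\Vert\vv{F}\Vert_{\HOcurldual}$. Moreover, because $\ker\curl|_{\HOneZeroVec}\subset\ker\curl|_{\HOcurl}$, restriction maps the first of the two spaces into the second.

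For the harder direction, given $\vv{F}$ in the second space, I would construct an extension $\tilde{\vv{F}}\in\HOcurldual$ by picking, for each $\vv{U}\in\HOcurl$, a Helmholtz decomposition $\vv{U}=\vv{W}-\gradp P$ as in \Cref{lem:helmholtz-HOcurl} and setting $\langle\tilde{\vv{F}},\vv{U}\rangle\coloneqq\langle\vv{F},\vv{W}\rangle$. The main obstacle here is well-definedness. If $\vv{W}_1-\gradp P_1=\vv{W}_2-\gradp P_2$ are two such decompositions, then $\vv{W}_1-\vv{W}_2=\gradp(P_1-P_2)$ lies in $\HOneZeroVec$ and is curl-free, hence belongs to $\ker\curl|_{\HOneZeroVec}$; the integrability assumption on $\vv{F}$ then forces $\langle\vv{F},\vv{W}_1\rangle=\langle\vv{F},\vv{W}_2\rangle$. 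Continuity with $\Vert\tilde{\vv{F}}\Vert_{\HOcurldual}\leq C\Vert\vv{F}\Vert_{\HOneDualVec}$ follows immediately from the bound $\Vert\vv{W}\Vert_{\HOneVec}\leq C\Vert\curl\vv{U}\Vert_{\LebVec}\leq C\Vert\vv{U}\Vert_{\Hcurl}$ supplied by \Cref{lem:helmholtz-HOcurl}.

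Finally, I would check that $\tilde{\vv{F}}$ belongs to the first space and that it restricts to $\vv{F}$ on $\HOneZeroVec$. For any $\vv{V}\in\ker\curl|_{\HOcurl}$, the above estimate forces $\vv{W}=0$ in every admissible decomposition, so $\langle\tilde{\vv{F}},\vv{V}\rangle=0$. For $\vv{V}\in\HOneZeroVec$, the trivial decomposition $\vv{V}=\vv{V}-\gradp 0$ is admissible, so $\langle\tilde{\vv{F}},\vv{V}\rangle=\langle\vv{F},\vv{V}\rangle$, and by well-definedness this value is independent of the chosen decomposition. Restriction and extension are therefore mutually inverse bounded maps, and the two norm inequalities together yield the claimed equivalence.
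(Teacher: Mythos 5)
Your proof is correct and rests on exactly the same key ingredient as the paper's, namely the decomposition of \Cref{lem:helmholtz-HOcurl} together with the observation that the integrability condition annihilates the gradient part. The only difference is packaging: the paper estimates $\langle\vv{F},\vv{V}\rangle$ for $\vv{V}\in\testfctsvec$ and extends by density to $\HOcurl$, whereas you define the extension directly on all of $\HOcurl$ and verify well-definedness, which amounts to the same argument.
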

\begin{proof}
\enquote{$\subset$} The first inclusion is trivial because of the continuous
embedding $\HOcurldual\hookrightarrow\HOneDualVec$. We thus also immediately
obtain the first inequality $\Vert\vv{F}\Vert_{\HOneDualVec}\leq\Vert\vv{F}
\Vert_{\HOcurldual}$.

\enquote{$\supset$} Let $\vv{F}\in\HOneDualVec$ be as above, and let $\vv{V}\in
\testfctsvec$ be arbitrary. We use the decomposition from \Cref{lem:helmholtz-HOcurl}
and write $\vv{V}=\vv{W}-\gradp P$ for some $P\in\HOne$ and $\vv{W}\in\HOneZeroVec$
that satisfies $\Vert\vv{W}\Vert_{\HOneVec}\leq C\Vert\curl\vv{V}\Vert_{\LebVec}$.
As $\vv{W}\in\HOneZeroVec$, we necessarily also have $\grad P\in\HOneZeroVec$ and
we may write:
\begin{equation}
\langle\vv{F},\vv{V}\rangle = \langle\vv{F},\vv{W}\rangle -
\underbrace{\langle\vv{F},\gradp P\rangle}_{=0} \leq
\Vert\vv{F}\Vert_{\HOneDualVec} \Vert\vv{W}\Vert_{\HOneVec}
\leq C\Vert\vv{F}\Vert_{\HOneDualVec} \Vert\vv{V}\Vert_{\Hcurl}.
\end{equation}
The distribution $\vv{F}\in\distrisvec$ thus admits a unique
continuous extension to $\overline{\testfctsvec}^{\Hcurl} =
\HOcurl$ and thus $\vv{F}\in\HOcurldual$ with
$\Vert\vv{F}\Vert_{\HOcurldual}\leq C\Vert\vv{F}\Vert_{\HOneDualVec}$.
\end{proof}

\begin{lemma}\label{lmm:plain-existence}
Suppose that $\Omega\subset\wholespace$ is a bounded Lipschitz domain.The equation
\begin{equation}\label{eqn:plain-solution}
\curl\vv{W} = \vv{F}
\end{equation}
has a solution $\vv{W}\in\vv{L}^2(\Omega)$ if and only if $\vv{F}\in\HOneDualVec$
and $\vv{F}$ fulfils the integrability condition \eqref{eqn:integrability condition}.
\end{lemma}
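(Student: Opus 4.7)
The plan is to handle the two directions of the equivalence separately. \emph{Necessity} is essentially immediate from the machinery already assembled. If $\curl\vv{W}=\vv{F}$ for some $\vv{W}\in\LebVec$, then the bound in \eqref{eqn:curlhatbound} shows that $\vv{F}$ extends uniquely to an element of $\HOcurldual$. For any $\vv{V}\in\HOneZeroVec\subset\HOcurl$ with $\curl\vv{V}=\gv{0}$, approximation by elements of $\testfctsvec$ in the $\Hcurl$-norm yields $\langle\vv{F},\vv{V}\rangle = \int_\Omega\vv{W}\cdot\curl\vv{V}\,\dd\vv{x} = 0$. Lemma \ref{lmm:identification of spaces} then promotes $\vv{F}\in\HOcurldual$ to $\vv{F}\in\HOneDualVec$, and the integrability condition \eqref{eqn:integrability condition} is precisely what was just verified.

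\emph{Sufficiency} is the substantive direction, and I would approach it via Riesz representation on $\LebVec$. Consider the subspace $R\coloneqq\curl(\HOneZeroVec)\subset\LebVec$ and define
\begin{equation*}
L:R\to\reals,\qquad L(\curl\vv{V})\coloneqq\langle\vv{F},\vv{V}\rangle.
\end{equation*}
Well-definedness of $L$ is immediate from \eqref{eqn:integrability condition}: two preimages of a given element of $R$ differ by an element of $\ker\curl|_{\HOneZeroVec}$, on which $\vv{F}$ vanishes by hypothesis. For continuity in the $\LebVec$-topology I would invoke Lemma \ref{lem:helmholtz-HOcurl}: for any $\vv{V}\in\HOneZeroVec\subset\HOcurl$, write $\vv{V}=\vv{W}_0-\gradp P$ with $\vv{W}_0\in\HOneZeroVec$ and $\|\vv{W}_0\|_{\HOneVec}\leq C\|\curl\vv{V}\|_{\LebVec}$. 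Since $\gradp P = \vv{W}_0 - \vv{V}\in\HOneZeroVec$ and is curl-free, the integrability condition eliminates it and leaves
\begin{equation*}
|L(\curl\vv{V})| = |\langle\vv{F},\vv{W}_0\rangle| \leq C\,\|\vv{F}\|_{\HOneDualVec}\|\curl\vv{V}\|_{\LebVec}.
\end{equation*}

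With this estimate in hand, $L$ extends by density to $\overline{R}^{\LebVec}$, then by the Hahn--Banach theorem to a bounded linear functional on all of $\LebVec$, and the Riesz representation theorem produces $\vv{W}\in\LebVec$ representing the extended functional. Testing against $\curl\vv{V}$ for $\vv{V}\in\testfctsvec$ then gives $\langle\curl\vv{W},\vv{V}\rangle = \langle\vv{W},\curl\vv{V}\rangle = L(\curl\vv{V}) = \langle\vv{F},\vv{V}\rangle$, so that $\curl\vv{W}=\vv{F}$ in the distributional sense, as required. The main obstacle in this scheme is the continuity estimate for $L$: there is no Poincaré-type inequality at the level of $\Hcurl$ that would control $\vv{V}\in\HOneZeroVec$ by $\curl\vv{V}$ in $\LebVec$, and the only way I see to recover such control is to peel off the curl-free part of $\vv{V}$ using Lemma \ref{lem:helmholtz-HOcurl} and let the integrability condition annihilate it. Everything else is standard functional analysis.
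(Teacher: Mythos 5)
Your proof is correct, and the sufficiency direction takes a genuinely different route from the paper. The paper proves existence by quoting the closed-range property of $\curl\big|_{\HOcurl}:\HOcurl\to\LebVec$ and applying Banach's closed range theorem to identify $\range\bigl(\curl\big|_{\LebVec}\bigr)$ with the annihilator of $\ker\curl\big|_{\HOcurl}$, and only then translates the resulting condition into the $\HOneZeroVec$-formulation via \Cref{lmm:identification of spaces}. You instead work directly with $\HOneZeroVec$: you define the functional $L(\curl\vv{V})\coloneqq\langle\vv{F},\vv{V}\rangle$ on $\curl(\HOneZeroVec)$, use \Cref{lem:helmholtz-HOcurl} together with the integrability condition to obtain the bound $|L(\curl\vv{V})|\leq C\Vert\vv{F}\Vert_{\HOneDualVec}\Vert\curl\vv{V}\Vert_{\LebVec}$, and then conclude by Hahn--Banach and Riesz representation. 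This is in effect a hands-on proof of the relevant half of the closed range theorem for this particular operator: the continuity estimate you derive is the concrete content that the abstract theorem extracts from closedness of the range, and \Cref{lem:helmholtz-HOcurl} is precisely the Poincar\'e-type substitute that both arguments ultimately rest on (the paper invokes it inside \Cref{lmm:identification of spaces}, you invoke it inside the continuity estimate). What your route buys is self-containedness and transparency about where the integrability condition enters --- it is used twice, once for well-definedness and once to annihilate the curl-free part in the continuity bound --- at the cost of redoing by hand a step the paper outsources to a cited closed-range result. Your necessity argument coincides with the paper's (via the bound \eqref{eqn:curlhatbound} and the easy inclusion of \Cref{lmm:identification of spaces}). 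I see no gap.
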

\begin{proof}
The continuous operator
\begin{equation}\label{eq: curl Hcurl to L2}
\curl\big\vert_{\HOcurl}:\HOcurl\rightarrow \vv{L}^2(\Omega)
\end{equation}
has closed range.\cite[Box~3.1]{arnold2018} The curl operator is symmetric and
the dual of the mapping \eqref{eq: curl Hcurl to L2} is the operator $\curl\big%
\vert_{\vv{L}^2(\Omega)}$ given in \eqref{eq: curl restricted to L2}. Hence,
Banach's closed range theorem yields
\begin{equation}
\range\left(\curl\big\vert_{\vv{L}^2(\Omega)}\right) =
\left(\ker\curl\big\vert_{\HOcurl}\right)^0
\end{equation}
That is,
\begin{equation}
\range\left(\curl\big\vert_{\vv{L}^2(\Omega)}\right) =
\biggl\lbrace \vv{F}\in\HOcurldual\ \bigg|\ \langle\vv{F},\vv{V}\rangle = 0
\quad\forall\vv{V}\in\ker\curl\big|_{\HOcurl}\biggr\rbrace.
\end{equation}
Evidently, problem \eqref{eqn:plain-solution} has a solution if and only if
$\vv{F}\in\range\,\left(\curl\big\vert_{\vv{L}^2(\Omega)}\right)$. Thus,
together with~\Cref{lmm:identification of spaces}, the claim follows.
\end{proof}

\begin{proof}[Proof of \Cref{thm:existence of solutions}]
\Cref{lmm:plain-existence} guarantees the existence of a $\vv{W}\in%
\LebVec$ such that $\curl\vv{W}=\vv{F}$. This function does not
necessarily fulfil $\DIV\vv{W}=0$. But in this case we let $P\in
H^1_0(\Omega)$ denote the unique solution to the Poisson problem 
\begin{equation}
\Laplace P = \DIV\vv{W}\qquad\text{in $\Omega$},
\end{equation}
and note that  $\vv{U}\coloneqq\vv{W}+\gradp P$ solves the div-curl
system~\eqref{eqn:div-curl-system}.
\end{proof}

The integrability condition~\eqref{eqn:integrability condition} is most
natural for the chosen method of proof. However, it is hard to
verify in practice. For this reason, it is worthwhile considering equivalent
alternative conditions.

\begin{lemma}\label{lem:alternative-conditions}
Suppose that $\Omega\subset\wholespace$ is a bounded Lipschitz domain and let
$\vv{F}\in\HOneDualVec$. Together, the following conditions are equivalent to
the integrability condition \eqref{eqn:integrability condition}:
\begin{subequations}
\begin{alignat}{2}
\label{eqn:intcond1}
\DIV\vv{F}                                       &= 0  &\qquad   &\text{in }\Omega,\\
\langle\vv{F},\grad T_i\big\vert_{\Omega}\rangle &= 0  &\qquad   &i=1,\dotsc,\beta_2.
\label{eqn:intcond2}
\end{alignat}
\end{subequations}
If in particular $\vv{F}\in\vv{L}^2(\Omega)$ and $\DIV\vv{F}=0$,
condition~\eqref{eqn:intcond2} is equivalent to:
\begin{equation}\label{eqn:intcond3}
\int_{\Gamma_i}\vv{F}\cdot\vv{n}\,{\mathrm d}S = 0\qquad i=1,\dotsc,\beta_2.
\end{equation}
\end{lemma}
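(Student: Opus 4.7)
The plan is to split the lemma into three implications: the integrability condition \eqref{eqn:integrability condition} implies both \eqref{eqn:intcond1} and \eqref{eqn:intcond2}; conversely, these two together imply \eqref{eqn:integrability condition}; and under the added assumptions $\vv{F}\in\LebVec$, $\DIV\vv{F}=0$, condition \eqref{eqn:intcond2} can be rewritten as \eqref{eqn:intcond3}.

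For the forward direction I would feed two different families of curl-free elements of $\HOneZeroVec$ into \eqref{eqn:integrability condition}. Taking $\vv{V}=\gradp\phi$ for $\phi\in\testfcts$ gives $\langle\vv{F},\gradp\phi\rangle=0$ for every test function, which is precisely the distributional statement $\DIV\vv{F}=0$. For \eqref{eqn:intcond2}, the crucial point is that by the support conditions in \eqref{def: Ti}, $\gradp T_i$ is compactly supported in the interior of $\Omega$, so $\gradp T_i|_\Omega\in\testfctsvec\subset\HOneZeroVec\cap\ker\curl$ is an admissible test field; applying \eqref{eqn:integrability condition} yields \eqref{eqn:intcond2} directly.

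For the converse I would characterise $\ker\curl|_{\HOneZeroVec}$. Given such a $\vv{V}$, extend by zero to obtain $\widetilde{\vv{V}}\in\vv{H}^1(\wholespace)$; distributional differentiation commutes with zero-extension for $\HOneZeroVec$-functions, so $\widetilde{\vv{V}}$ remains curl-free globally. Since $\wholespace$ is simply connected, $\widetilde{\vv{V}}=\gradp Q$ for some scalar potential $Q\in H^2_{\mathrm{loc}}(\wholespace)$, and the vanishing of $\widetilde{\vv{V}}$ on each complementary component $\Theta_i$ forces $Q\equiv c_i$ there, with $c_0=0$ after a normalisation. Setting $Q_0\coloneqq Q|_\Omega-\sum_{i=1}^{\beta_2}c_iT_i|_\Omega$, I would argue that both the trace and the normal derivative of $Q_0$ vanish on every $\Gamma_j$: the trace because $\gamma T_i=\delta_{ij}$ on $\Gamma_j$ gives $\gamma Q_0=c_j-c_j=0$, and the normal derivative because $\gradp Q=\widetilde{\vv{V}}\in\vv{H}^1(\wholespace)$ has a single-valued trace on $\Gamma$ that must coincide with its vanishing exterior trace, while $\gradp T_i$ vanishes near $\Gamma$ by \eqref{def: Ti}. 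Hence $Q_0\in H^2_0(\Omega)$, and
\[
\langle\vv{F},\vv{V}\rangle=\langle\vv{F},\gradp Q_0\rangle+\sum_{i=1}^{\beta_2}c_i\langle\vv{F},\gradp T_i|_\Omega\rangle.
\]
The second sum vanishes by \eqref{eqn:intcond2}. For the first, approximating $Q_0$ by $\phi_n\in\testfcts$ in $H^2(\Omega)$-norm yields $\gradp\phi_n\to\gradp Q_0$ in $\HOneZeroVec$, and continuity of $\vv{F}\in\HOneDualVec$ combined with $\DIV\vv{F}=0$ gives $\langle\vv{F},\gradp Q_0\rangle=0$.

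Finally, under $\vv{F}\in\LebVec$ and $\DIV\vv{F}=0$, the field $\vv{F}$ lies in $\Hdiv$ and admits a normal trace in $\SobTraceDual$. A single integration by parts then gives
\[
\langle\vv{F},\gradp T_i|_\Omega\rangle=-\int_\Omega(\DIV\vv{F})\,T_i\,\mathrm d\vv{x}+\langle\nu\vv{F},\gamma T_i\rangle_\Gamma=\int_{\Gamma_i}\vv{F}\cdot\vv{n}\,\mathrm dS,
\]
using $\DIV\vv{F}=0$ and $\gamma T_i=\delta_{ij}$ on $\Gamma_j$, establishing the claimed equivalence. The main obstacle is the converse direction, and more specifically the claim $Q_0\in H^2_0(\Omega)$: while the vanishing of its trace is essentially by design of the $T_i$, the vanishing of its normal derivative relies on the less obvious observation that $Q$ is locally constant on every complementary component $\Theta_i$ and that $\gradp Q$, being globally $\vv{H}^1$, has a single-valued trace across $\Gamma$ that must agree with the zero exterior trace.
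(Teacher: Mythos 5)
Your proof is correct and follows essentially the same route as the paper's: necessity comes from testing with gradients of test functions and with $\gradp T_i|_\Omega\in\testfctsvec$, and sufficiency from extending $\vv{V}$ by zero, writing the extension as a global gradient, and splitting the restricted potential as $Q_0+\sum_i c_iT_i|_\Omega$ so that \eqref{eqn:intcond1} and \eqref{eqn:intcond2} each kill one piece, with the final equivalence to \eqref{eqn:intcond3} by the normal-trace Green formula. The only (minor) divergence is that the paper obtains the primitive via the Newton operator, $\widetilde{\vv{V}}=-\gradp\Newton\DIV\widetilde{\vv{V}}$, whose decay at infinity fixes the exterior constant, whereas you invoke simple-connectedness of $\wholespace$ and normalise the constant by hand; both work, and your explicit density argument for $\langle\vv{F},\gradp Q_0\rangle=0$ is, if anything, more careful than the paper's.
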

\begin{remark}
Notice that definition \eqref{def: Ti} guarantees that $\grad T_i\big\vert_{\Omega}\in
\testfctsvec$.
\end{remark}
\begin{remark}
Together \eqref{eqn:intcond1} and \eqref{eqn:intcond2} also imply that
$\langle\vv{F},\grad T_0|_\Omega\rangle = 0$ holds. If in particular $\beta_2=0$,
it suffices to demand $\DIV\vv{F}=0$.
\end{remark}
\begin{proof}
\enquote{$\Rightarrow$} Since $\curl\circ\,(\grad)\equiv\gv{0}$, 
conditions~\eqref{eqn:intcond1} and~\eqref{eqn:intcond2} are
immediately seen to be necessary from the definitions. 
	
\enquote{$\Leftarrow$} To see that they also are sufficient, let
$\vv{V}\in\ker\curl\big|_{\HOneZeroVec}$ be arbitrary. We may extend this
function by zero outside $\Omega$:
\begin{equation}
\widetilde{\vv{V}}:\wholespace\to\wholespace,\qquad \vv{x}\mapsto
\begin{cases}
\vv{V}(\vv{x}) & \vv{x}\in\Omega, \\
\gv{0}         & \text{else.}
\end{cases}
\end{equation}
Since $\gv{\rho}(\vv{V})=\gv{0}$, we have $\curl\widetilde{\vv{V}}=\gv{0}$
on all of $\wholespace$. Since its support is compact, we may use the Helmholtz
decomposition~\eqref{eqn:helmholtz-decomposition} to rewrite this extension in
terms of
\begin{equation}
\widetilde{\vv{V}} =  \curl\mathcalbf{N}\underbrace{\curl\widetilde{\vv{V}}}_{=\gv{0}}
- \nabla\underbrace{\mathcal{N}\DIV\widetilde{\vv{V}}}_{=:\widetilde{P}}
= -\nabla\widetilde{P}.
\end{equation}
	
The restriction $P\coloneqq\widetilde{P}\big\vert_{\Omega}$ belongs to $H^1(\Omega)$,
because $\grad P =\vv{V}\in\LebVec$. Moreover, we see from $\gv{\tau}(\vv{V}) =
\gv{0}$ that $P=C_i$ for some constant $C_i\in\mathbb{R}$ on each connected
component $\Gamma_i$ of the boundary, $i=0,1,\dotsc,\beta_2$. Because
$\widetilde{P}\to 0$ at infinity, $\grad\widetilde{P}=\gv{0}$ outside $\Omega$,
and $\widetilde{P}\in\Hloc$ we have $C_0=0$. From the decomposition
\begin{equation}
P = \underbrace{\left(P-\sum_{i=1}^{\beta_2}C_iT_i\big\vert_{\Omega}\right)}_{%
=:P_0\in H^1_0(\Omega)}
+\sum_{i=1}^{\beta_2}C_iT_i\big\vert_{\Omega},
\end{equation}
we obtain
\begin{equation}
\langle\vv{F},\vv{V}\rangle =  \langle\vv{F},\grad P\rangle =
\underbrace{\langle\vv{F},\grad P_0\rangle}_{=0,\ \eqref{eqn:intcond1}}
+\sum_{i=1}^{\beta_2}C_i\underbrace{\langle\vv{F},\grad T_i\big\vert_{\Omega}%
\rangle}_{=0,\ \eqref{eqn:intcond2}} = 0.
\end{equation}
Thus \eqref{eqn:integrability condition} is equivalent to the combination of
\eqref{eqn:intcond1} and \eqref{eqn:intcond2}.
	
Finally, the equivalence of \eqref{eqn:intcond2} and \eqref{eqn:intcond3}
directly follows from the definition of the normal trace: if $\vv{F}\in\LebVec$
and $\DIV\vv{F}=0$, we also have $\vv{F}\in\Hdiv$. Thus $\vv{F}$ has a well-%
defined normal trace and by definition:
\begin{equation}
\intdx{\Gamma_i}{}{\vv{F}\cdot\vv{n}}{S}
= \langle \nu\vv{F},\gamma T_i\rangle_\Gamma 
= \intdx{\Omega}{}{\vv{F}\cdot\nabla T_i}{\vv{x}}+
  \intdx{\Omega}{}{\underbrace{\DIV\vv{F}}_{=0}\,T_i}{\vv{x}}
= \intdx{\Omega}{}{\vv{F}\cdot\nabla T_i}{\vv{x}}.
\end{equation}
\vspace{\baselineskip} 
\end{proof}

\subsection{Uniqueness of Velocity Fields}
\begin{theorem}\label{thm:uniqueness of velocity field}
Let $\Omega\subset\wholespace$ be a bounded, handle-free Lipschitz domain and let
$\vv{F}\in\HOneDualVec$ fulfil the integrability condition~\eqref{eqn:integrability condition}.
Additionally, let $g\in H^{-\frac{1}{2}}(\Gamma)$ be given such that $\langle g,
1\rangle_\Gamma=0$. Then the div-curl system~\eqref{eqn:div-curl-system} has
exactly one solution $\vv{U}\in\LebVec$ with $\vv{U}\cdot\vv{n}=g$ on $\Gamma$.
\end{theorem}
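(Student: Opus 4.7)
The plan is to prove existence and uniqueness separately. \Cref{thm:existence of solutions} already provides some $\vv{U}_0\in\LebVec$ with $\curl\vv{U}_0=\vv{F}$ and $\DIV\vv{U}_0=0$, so in particular $\vv{U}_0\in\Hdiv$ and its normal trace $\nu\vv{U}_0\in H^{-\frac{1}{2}}(\Gamma)$ is well-defined. Applying Green's identity to the constant function $1\in\HOne$ yields $\langle\nu\vv{U}_0,1\rangle_\Gamma=\int_\Omega\DIV\vv{U}_0\,\mathrm{d}\vv{x}=0$, and by hypothesis $\langle g,1\rangle_\Gamma=0$ as well. I would then solve the Neumann problem
\begin{equation*}
\Laplacep\psi=0\quad\text{in $\Omega$,}\qquad \nu(\nabla\psi)=g-\nu\vv{U}_0\quad\text{on $\Gamma$,}
\end{equation*}
for $\psi\in\HOne$ (unique up to a constant), whose compatibility condition is exactly $\langle g-\nu\vv{U}_0,1\rangle_\Gamma=0$. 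Then $\vv{U}\coloneqq\vv{U}_0+\nabla\psi$ still satisfies $\curl\vv{U}=\vv{F}$ (since $\curl\circ\nabla\equiv\gv{0}$) and $\DIV\vv{U}=0$ (since $\Laplacep\psi=0$), while its normal trace equals $g$ by construction.

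For uniqueness, let $\vv{U}$ denote the difference of two such solutions, so that $\vv{U}\in\LebVec$ satisfies $\curl\vv{U}=\gv{0}$, $\DIV\vv{U}=0$, and $\nu\vv{U}=0$. Since $\Omega$ is handle-free, the first de Rham cohomology of $\Omega$ is trivial, which means every curl-free $\LebVec$-field admits an $\HOne$-potential: $\vv{U}=\nabla\phi$ for some $\phi\in\HOne$. Then $\phi$ solves the homogeneous Neumann problem $\Laplacep\phi=0$ in $\Omega$ with $\nu(\nabla\phi)=0$ on $\Gamma$. Since $\Omega$ is connected, the only such solutions are constants, and therefore $\vv{U}=\nabla\phi=\gv{0}$.

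The main subtlety lies precisely in the appeal to the identity $\ker(\curl|_{\LebVec})=\nabla\HOne$, which is where the topological hypothesis $\beta_1=0$ enters. On a domain with handles this identity fails: there exist harmonic cohomology fields that are curl-free, divergence-free, have vanishing normal trace, and yet are not gradients, so uniqueness would only be recovered by prescribing additional data such as circulations around representatives of the non-bounding cycles. For handle-free bounded Lipschitz domains, the validity of this representation is classical and can be found, e.g., in Amrouche et al.\cite{amrouche1998} or Girault and Raviart.\cite{raviart1986} Everything else reduces to well-known well-posedness results for the Neumann Laplacian on a bounded connected Lipschitz domain.
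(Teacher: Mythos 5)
Your proposal is correct and follows essentially the same route as the paper: existence by adding a harmonic potential-flow correction $\nabla\psi$ solving a Neumann problem whose compatibility condition is exactly $\langle g-\nu\vv{U}_0,1\rangle_\Gamma=0$, and uniqueness by showing the difference is a Neumann harmonic field that must vanish when $\beta_1=0$. The only cosmetic difference is that where the paper simply cites the fact that the space of Neumann harmonic fields has dimension $\beta_1$, you unpack that citation by invoking the identity $\ker(\curl|_{\LebVec})=\nabla\HOne$ on handle-free domains and then killing the resulting harmonic Neumann potential; this is the same underlying classical fact.
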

\begin{remark}
The normal trace $\vv{U}\cdot\mathbf{n}$ is well-defined because the solution of
the div-curl system satisfies $\DIV\vv{U}=0$.
\end{remark}
\begin{proof}
	Let us first remark that the condition $\langle g,1\rangle_\Gamma = 0$ is necessary.
	To see this, note that because $\DIV\vv{U}=0$, any solution $\vv{U}\in\vv{L}^2(\Omega)$ of the div-curl system
	\eqref{eqn:div-curl-system} must fulfil:
	\begin{equation}
		\int_\Gamma \vv{U}\cdot\vv{n}\,{\mathrm d}S =
		\int_{\Omega}\DIV\vv{U}\,{\mathrm d}\vv{x} = 0.
	\end{equation}
	
	Now let $\vv{W}\in\LebVec$ denote any solution of the div-curl system,
	whose existence is guaranteed by \Cref{thm:existence of solutions}. Let $P\in
	H^1(\Omega)/\reals$ be the unique solution of the Neumann problem:
	\begin{equation}
		\left\lbrace
		\begin{aligned}
			\Laplace P            &= 0                     &  &\text{in $\Omega$},\\
			\grad    P\cdot\vv{n} &= g - \vv{W}\cdot\vv{n} &  &\text{on $\Gamma$}.
		\end{aligned}
		\right.
	\end{equation}
	Then the function $\vv{U}\coloneqq\vv{W}-\nabla P$ fulfils the conditions of the
	theorem.
	
	To see that it is unique, let $\vv{U}_1,\vv{U}_2\in\LebVec$ denote two solutions of
	the div-curl system~\eqref{eqn:div-curl-system} that fulfil $\vv{U}_1\cdot\vv{n}
	=\vv{U}_2\cdot\vv{n} = g$ on the boundary $\Gamma$. Then their difference
	$\vv{D}\coloneqq\vv{U}_1-\vv{U}_2$ solves
	\begin{equation}
		\left\lbrace
		\begin{aligned}
			\DIV\vv{D} &= 0 &\text{in $\Omega$},\\
			\curl\vv{D} &= \gv{0} &\text{in $\Omega$},\\
			\vv{D}\cdot\vv{n} &= 0\ &\text{on $\Gamma$}.
		\end{aligned}
		\right.
	\end{equation}
	In other words, $\vv{D}$ is a so-called Neumann harmonic field. These functions
	form a space of dimension $\beta_1$.\cite[Proposition~3.14]{amrouche1998}\cite [Section~4.3]{arnold2018} By hypothesis $\beta_1=0$, and
	thus $\vv{D}=\gv{0}$.
\end{proof}

The above proof hints at what needs to be done in order to recover uniqueness in the
case $\beta_1\neq 0$. One needs to prescribe $\beta_1$ functionals
that determine the Neumann harmonic components of $\vv{U}$. A construction of
these fields and corresponding functionals can be found in the work of Amrouche
et al. \cite {amrouche1998}

\section{Stream Functions}
We prove the existence result for stream functions of \Cref{item:streamfct} in this section.
The related uniqueness statement of \Cref{uniqueness item} is proven in \Cref{thm:uniqueness of stream function}.

\subsection{Existence of Stream Functions}
The following theorem is a variant of a result by Girault and Raviart.\cite[%
Theorem~3.4]{raviart1986} We give a different proof, which uses the Newton
operator instead of Fourier transforms. 
\begin{theorem}\label{thm:existence of stream function}
Let $\Omega\subset\wholespace$ denote a bounded Lipschitz domain. Then
$\vv{U}\in\LebVec$ satisfies
\begin{subequations}
\begin{align}
\DIV\vv{U} &= 0,\qquad \text{in $\Omega$},\label{eq:div zero}\\
\int_{\Gamma_i} \vv{U}\cdot\vv{n}\,{\mathrm d}S &= 0,\qquad
i=1,\dotsc,\beta_2,\label{eq:cond int zero on bdy}
\end{align}
\end{subequations}
if and only if there exists a vector-field $\vv{A}\in\HlocVec$ such
that
\begin{equation}\label{eqn:streamfct-conditions}
\left\lbrace
\begin{aligned}
\curl\vv{A}        &=   \vv{U} &  &\text{in $\Omega$},      &\qquad
-\gv{\Delta}\vv{A} &=   \gv{0} &  &\text{in $\wholespace\setminus\overline{\Omega}$}, \\
\DIV\vv{A}         &=       0  &  &\text{in $\wholespace$}, &\qquad
\vv{A}(\vv{x})     &\to \gv{0} &  &\text{as $|\vv{x}|\to\infty$}.
\end{aligned}
\right.
\end{equation}
\end{theorem}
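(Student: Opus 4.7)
The two directions require distinct arguments. For the implication $(\Leftarrow)$, I would take $\vv{A}\in\HlocVec$ satisfying~\eqref{eqn:streamfct-conditions} as given. The identity $\DIV\circ\curl\equiv 0$ together with the first condition of~\eqref{eqn:streamfct-conditions} immediately delivers~\eqref{eq:div zero}. For the flux conditions~\eqref{eq:cond int zero on bdy}, the key is that $\vv{A}\in\HlocVec$ makes its tangential trace $\gv{\tau}\vv{A}$ single-valued across $\Gamma$; consequently $\vv{n}\cdot\curl\vv{A} = \tracecurl(\gv{\tau}\vv{A})$ is a well-defined, intrinsic quantity on each closed surface $\Gamma_i$, and Stokes' theorem on a closed surface yields $\int_{\Gamma_i}\vv{U}\cdot\vv{n}\,\dd S = \int_{\Gamma_i}\tracecurl(\gv{\tau}\vv{A})\,\dd S = 0$.

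For $(\Rightarrow)$, my strategy is to construct a globally divergence-free extension $\widetilde{\vv{U}}\in\LlocVec$ of $\vv{U}$ that is curl-free outside $\overline{\Omega}$, and then set $\vv{A}\coloneqq\NewtonVec(\curl\widetilde{\vv{U}})$. The extension is built by prescribing a potential flow in each connected component of $\wholespace\setminus\overline{\Omega}$: for each bounded hole $\Theta_i$, $i=1,\dotsc,\beta_2$, the hypothesis $\int_{\Gamma_i}\vv{U}\cdot\vv{n}\,\dd S=0$ is precisely the compatibility condition required to solve the interior Neumann problem $\Laplace\varphi_i=0$ in $\Theta_i$ with $\gradp\varphi_i\cdot\vv{n}=\vv{U}\cdot\vv{n}$ on $\Gamma_i$, yielding $\varphi_i\in H^1(\Theta_i)/\mathbb{R}$. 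For the unbounded component $\Theta_0$, combining $\DIV\vv{U}=0$ with the flux hypotheses on $\Gamma_1,\dotsc,\Gamma_{\beta_2}$ forces $\int_{\Gamma_0}\vv{U}\cdot\vv{n}\,\dd S=0$, so the corresponding exterior Neumann problem admits a solution $\varphi_0$ with $\gradp\varphi_0\in\vv{L}^2(\Theta_0)$ decaying at infinity. Setting $\widetilde{\vv{U}}\coloneqq\vv{U}$ on $\Omega$ and $\widetilde{\vv{U}}\coloneqq\gradp\varphi_i$ on each $\Theta_i$ produces a field in $\LlocVec$ with matching normal traces across $\Gamma$, so that $\DIV\widetilde{\vv{U}}=0$ on all of $\wholespace$.

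With $\vv{A}\coloneqq\NewtonVec(\curl\widetilde{\vv{U}})$, I note that $\curl\widetilde{\vv{U}}$ is compactly supported in $\overline{\Omega}$ since $\curl\gradp\varphi_i\equiv\gv{0}$ on each $\Theta_i$. The mapping property of $\NewtonVec$ yields $\vv{A}\in\HlocVec$ with $\vv{A}\to\gv{0}$ at infinity, and its commutation with $\DIV$ and $\Laplacevec$ gives $\DIV\vv{A}=\Newton\DIV\curl\widetilde{\vv{U}}=0$ as well as $\Laplacevec\vv{A}=\curl\widetilde{\vv{U}}=\gv{0}$ on $\wholespace\setminus\overline{\Omega}$. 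The remaining identity $\curl\vv{A}=\vv{U}$ on $\Omega$ follows from the chain $\curl\vv{A}=\NewtonVec\curl\curl\widetilde{\vv{U}}=\NewtonVec(\Laplacevec\widetilde{\vv{U}}+\gradp\DIV\widetilde{\vv{U}})=\NewtonVec\Laplacevec\widetilde{\vv{U}}=\widetilde{\vv{U}}$, where the final equality invokes \Cref{lem:newton-characterisation}: the distribution $\Laplacevec\widetilde{\vv{U}}$ is compactly supported because $\Laplacevec\gradp\varphi_i=\gradp\Laplace\varphi_i=0$ on each $\Theta_i$, and $\widetilde{\vv{U}}\to\gv{0}$ at infinity follows from the decay of $\gradp\varphi_0$.

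The main obstacle is the exterior Neumann problem on $\Theta_0$ together with establishing the precise decay of $\gradp\varphi_0$ at infinity. This step requires either a careful weighted-Sobolev-space framework for the exterior Laplacian or a reduction to classical boundary integral equations on $\Gamma_0$. Once $\widetilde{\vv{U}}$ is known to decay at infinity, the remaining verifications are essentially algebraic manipulations with the Newton operator and its commutation with the differential operators.
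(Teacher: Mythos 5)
Your proposal is correct and follows essentially the same route as the paper: extension of $\vv{U}$ by potential flows solving interior/exterior Neumann problems on the $\Theta_i$, followed by application of $\NewtonVec$ to the compactly supported $\curl\widetilde{\vv{U}}$ and the characterisation of the Newton potential (\Cref{lem:newton-characterisation}) to conclude $\curl\vv{A}=\widetilde{\vv{U}}$. The only cosmetic differences are that you prove the necessity direction directly via Stokes' theorem on the closed surfaces $\Gamma_i$ (the paper instead cites \Cref{lmm:plain-existence} and \Cref{lem:alternative-conditions}), and you apply the representation $\widetilde{\vv{U}}=\NewtonVec\Laplacevec\widetilde{\vv{U}}$ directly rather than to the difference $\vv{D}=\curl\vv{A}-\widetilde{\vv{U}}$.
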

\begin{proof}
\enquote{$\Leftarrow$} Note that the conditions \eqref{eq:div zero} and
\eqref{eq:cond int zero on bdy} are exactly the integrability conditions
\eqref{eqn:intcond1} and \eqref{eqn:intcond3}. Because of \Cref{lmm:plain-existence}
and \Cref{lem:alternative-conditions}, these conditions are necessary to ensure
the existence of a vector-field $\vv{A}\in\LebVec$ such that $\vv{U}=\curl\vv{A}$
in $\Omega$.
	
\enquote{$\Rightarrow$} In order to show sufficiency, the idea is to extend
$\vv{U}$ to  $\wholespace$ by \enquote{potential flows} matching $\vv{U}\cdot%
\vv{n}$ on $\Gamma$, then use the Newton operator.

We want to exploit the following scalar functions. For $i=0$, we let $P_0\in
H^1_{\mathrm{loc}}(\Theta_0)$ denote the solution of the problem:
\begin{equation}
\left\lbrace
\begin{aligned}
 \Laplace P_0  &= 0                              &       &\text{in $\Theta_0$},\\
 \grad    P_0\cdot\vv{n} &= \vv{U}\cdot\vv{n}    &       &\text{on $\Gamma_0$},\\
 P_0(\vv{x})&\to 0                               &       &\text{as $|\vv{x}|\to\infty$,}
\end{aligned}
\right.
\end{equation}
whereas for $i=1,\ldots,\beta_2$ we define $P_i\in H^1(\Omega_i)/\reals$ as
the solution of:
\begin{equation}
\left\lbrace
\begin{aligned}
\Laplace P_i  &= 0                              &       &\text{in $\Theta_i$},\\
\grad    P_i\cdot\vv{n} &= \vv{U}\cdot\vv{n}    &       &\text{on $\Gamma_i$}.\\
\end{aligned}
\right.
\end{equation}
Because of condition~\eqref{eq:cond int zero on bdy}, it is well-known that these
problems are well-posed.

We are now ready to extend $\vv{U}$ to the whole space as
\begin{equation}
\widetilde{\vv{U}}:\wholespace\to\wholespace,\qquad\vv{x}\mapsto
\begin{cases}
\vv{U}(\vv{x})         & \vv{x}\in\Omega, \\
\grad P_i(\vv{x})    & \vv{x}\in\Theta_i,\,i=0,...,\beta_2.
\end{cases}
\end{equation}
Because $\llbracket\widetilde{\vv{U}}\cdot\vv{n}\rrbracket = 0$, we have $\DIV
\widetilde{\vv{U}}=0$ on $\wholespace$. Since $\supp\left(\curl\widetilde{\vv{U}}\right)
\subset\overline{\Omega}$, $\curl\widetilde{\vv{U}}\in\vv{H}^{-1}(\wholespace)$ is
compactly supported, and we may define $\vv{A}\coloneqq\NewtonVec\curl\widetilde{\vv{U}}$.
	
We now claim that $\curl\mathbf{A}=\widetilde{\vv{U}}$ on $\wholespace$.
From the properties of $\NewtonVec$ it follows that $\vv{A}\in\HlocVec$ and
$\vv{A}(\vv{x})\to\gv{0}$ at infinity. Because $\NewtonVec$ commutes with
differentiation, we have $\DIV\vv{A}=\Newton\DIV\curl\widetilde{\vv{U}}=0$
on $\wholespace$, and thus:
\begin{equation}
\left\lbrace
\begin{aligned}
\curl(\curl\vv{A})  &=  & &\Laplacevec\vv{A} &  &= \curl\widetilde{\vv{U}}\\
\DIV(\curl{\vv{A}}) &=  & &\ \ \ \ 0         &  &=\DIV\widetilde{\vv{U}}
\end{aligned}
\right.
\qquad\text{on $\wholespace$}.
\end{equation}
The difference $\vv{D}\coloneqq\curl\mathbf{A}-\widetilde{\vv{U}}$ therefore fulfils:
\begin{equation}
\left\lbrace
\begin{aligned}
\Laplacevec\vv{D}  &=  \gv{0}  &   &\text{on $\wholespace$,}\\
\vv{D}(\vv{x})     &\to\gv{0}  &   &\text{as $|\vv{x}|\to\infty$,}
\end{aligned}
\right.
\end{equation}
so that from \Cref{lem:newton-characterisation} we conclude $\vv{D}=\NewtonVec
\gv{0}=\gv{0}$, that is $\curl\mathbf{A}=\widetilde{\vv{U}}$.
\end{proof}

\subsection{Uniqueness of Stream Functions}
For a given velocity field $\vv{U}$, a stream function $\vv{A}$ can be constructed
as in the proof of \Cref{thm:existence of stream function}. In the following
section, however, we construct a stream function $\vv{A}$ directly from $\vv{F}$,
so there the extended velocity field is \emph{a-priori unknown}. The following
result allows us to establish that the stream functions from \Cref{thm:existence
of stream function} and \Cref{sec:construction} coincide if $\beta_1 = 0$. This
in turn will also allow us to establish higher regularity in \Cref{thm:regularity}.
Note that one obtains uniqueness of $\vv{A}$, while \emph{neither} explicitly
referring to the extension of $\vv{U}$ outside $\Omega$, \emph{nor} to the
boundary values of $\vv{A}$ on $\Gamma$. The proof also gives motivation for the
constructions presented later.

\begin{theorem}\label{thm:uniqueness of stream function}
Let $\Omega\subset\wholespace$ be a handle-free, bounded Lipschitz domain ($\beta_1=0$) and
let $\vv{U}\in\LebVec$ fulfil the conditions of~\Cref{thm:existence of stream function}. Then
there exists exactly one vector-field $\vv{A}\in\HlocVec$ satisfying~\eqref{eqn:streamfct-conditions}.
\end{theorem}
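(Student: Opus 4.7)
The plan is to prove uniqueness by showing that the difference $\vv{D}\coloneqq\vv{A}_1-\vv{A}_2$ of any two solutions of~\eqref{eqn:streamfct-conditions} must vanish. I would combine \Cref{lem:representation-formula} with a scalar-potential argument in the exterior that crucially exploits the handle-free hypothesis. By linearity, $\vv{D}\in\HlocVec$ inherits the homogeneous conditions $\curl\vv{D}=\gv{0}$ in $\Omega$, $\Laplacevec\vv{D}=\gv{0}$ in $\wholespace\setminus\overline{\Omega}$, $\DIV\vv{D}=0$ in $\wholespace$, and $\vv{D}(\vv{x})\to\gv{0}$ as $|\vv{x}|\to\infty$.

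First I would extend $\Laplacevec\vv{D}=\gv{0}$ to all of $\wholespace\setminus\Gamma$: the identity $\Laplacevec\vv{D}=\curl\curl\vv{D}-\gradp\DIV\vv{D}$ yields it in $\Omega$ directly from the curl- and divergence-free conditions there. Combined with $\DIV\vv{D}=0$ globally and decay at infinity, \Cref{lem:representation-formula} applies to $\vv{D}$ and gives
\begin{equation*}
\vv{D} = \NewtonVec\bigl(\gv{\tau}'\llbracket\gv{\rho}\curl\vv{D}\rrbracket\bigr).
\end{equation*}
Because $\curl\vv{D}=\gv{0}$ in $\Omega$, the inner trace $\gv{\rho}\curl\vv{D}|_\Omega$ vanishes, and the proof reduces to showing that the exterior trace $\gv{\rho}^C\curl\vv{D}|_{\wholespace\setminus\overline{\Omega}}$ also vanishes: this will force $\vv{D}=\NewtonVec(\gv{0})=\gv{0}$.

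The central step is thus to establish $\vv{E}\coloneqq\curl\vv{D}\equiv\gv{0}$ on $\wholespace\setminus\overline{\Omega}$. Rearranging $\Laplacevec\vv{D}=\curl\curl\vv{D}-\gradp\DIV\vv{D}$ in the exterior gives $\curl\vv{E}=\gv{0}$, while $\DIV\vv{E}=0$ holds identically. Because $\curl\vv{D}\in\LlocVec$ satisfies $\DIV\curl\vv{D}\equiv 0$, its normal trace is continuous across $\Gamma$; the interior side vanishes, so $\nu\vv{E}=0$ on $\Gamma$. Finally, componentwise harmonicity of $\vv{D}$ off $\Gamma$ together with $\vv{D}\to\gv{0}$ at infinity delivers the decay $\vv{E}(\vv{x})=\bigO{|\vv{x}|^{-2}}$ via standard gradient estimates for harmonic functions.

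The handle-free hypothesis enters decisively at this point: when $\beta_1=0$, each connected component of $\wholespace\setminus\overline{\Omega}=\Theta_0\cup\Theta_1\cup\dotsb\cup\Theta_{\beta_2}$ is itself handle-free, so $\curl\vv{E}=\gv{0}$ yields a scalar potential $\vv{E}|_{\Theta_i}=\gradp Q_i$ on each $\Theta_i$. The remaining conditions translate into the homogeneous Neumann problems $\Laplace Q_i=0$ in $\Theta_i$ with $\gradp Q_i\cdot\vv{n}=0$ on $\Gamma_i$, supplemented by decay of $\gradp Q_0$ at infinity in the unbounded component. Classical uniqueness---modulo additive constants---forces each $Q_i$ constant and hence $\vv{E}\equiv\gv{0}$, completing the argument. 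The main obstacle is exactly this topological step: without $\beta_1=0$ the exterior carries nontrivial Neumann harmonic fields, and one would then need to prescribe $\beta_1$ additional functionals to restore uniqueness, mirroring the remark following \Cref{thm:uniqueness of velocity field}.
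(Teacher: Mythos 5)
Your proof is correct, but it takes a genuinely different route from the paper's. Both arguments start identically: form $\vv{D}=\vv{A}_1-\vv{A}_2$, verify the homogeneous conditions, and invoke \Cref{lem:representation-formula} to reduce everything to showing $\llbracket\gv{\rho}\curl\vv{D}\rrbracket=\gv{0}$. From there the paper stays on the boundary: it decomposes the jump $\vv{s}\in\HRMinusDiv$ via the Hodge decomposition of \Cref{lem:trace-hodge-decomposition} (this is where handle-freeness enters for the paper) as $\vv{s}=\tracecurlvec p-\tracegradp q$, then kills $q$ using $\DIV\vv{D}=0$ and coercivity of the Laplace--Beltrami operator, and kills $p$ using $\curl\vv{D}=\gv{0}$ in $\Omega$ and coercivity of the hypersingular boundary integral operator \eqref{eq:hypersingular BIO}. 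That boundary-centric argument is chosen deliberately, since it mirrors and motivates the explicit construction \eqref{eqn:construction} in \Cref{sec:construction}. You instead work in the volume: $\curl\vv{D}$ already vanishes on the interior side, and on the exterior side it is a Neumann harmonic field on each component $\Theta_i$, which vanishes because $\beta_1^C=\beta_1=0$. This is essentially the idea the paper itself sketches in the remarks following the theorem (the fields $\vv{B}$, $\vv{B}^C$), but promoted to a complete proof; it is more elementary in that it avoids boundary integral operators and the trace Hodge decomposition, at the cost of needing the topological fact that the complement of a handle-free domain is handle-free (Alexander duality, which the paper asserts as $\beta_1^C=\beta_1$) and the mild care you correctly supply about decay in the unbounded component $\Theta_0$. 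Equivalently, you could phrase your exterior step as the triviality of the $\beta_1$-dimensional space of Neumann harmonic fields, citing the same references the paper uses in the proof of \Cref{thm:uniqueness of velocity field}, which would shorten the scalar-potential discussion.
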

\begin{proof}
Suppose that $\vv{A}_1$ and $\vv{A}_2$ are two vector-fields in $\HlocVec$
satisfying~\eqref{eqn:streamfct-conditions}. Then their difference
$\vv{D}\coloneqq\vv{A}_1-\vv{A}_2\in\HlocVec$ fulfils:
\begin{equation}
\left\lbrace
\begin{aligned}
\curl\vv{D}      &=  \gv{0}  &    &\text{in $\Omega$},      &\quad
\Laplacevec\vv{D}&=  \gv{0}  &    &\text{in $\wholespace\setminus\Gamma$},\\
\DIV\vv{D}       &=      0   &    &\text{in $\wholespace$}, &\quad
\vv{D}(\vv{x})   &\to\gv{0}  &    &\text{as $|\vv{x}|\to\infty$}.
\end{aligned}
\right.
\end{equation}
Thus, \Cref{lem:representation-formula} is applicable, yielding
$\vv{D}=\NewtonVec(\Laplacevec\vv{D})$ and $\Laplacevec\vv{D}=
\gv{\tau}'\llbracket\gv{\rho}\curl{\vv{D}}\rrbracket$. From the mapping
properties of $\gv{\rho}$ it follows that $\vv{s}\coloneqq
\llbracket\gv{\rho}\curl{\vv{D}}\rrbracket\in\HRMinusDiv$. The Hodge
decomposition from \Cref{lem:trace-hodge-decomposition} furthermore yields
\begin{equation}
\vv{s}\coloneqq\llbracket\gv{\rho}\curl{\vv{D}}\rrbracket = \tracecurlvec p - \tracegradp q
\end{equation}
for some functions $p\in H^{\frac{1}{2}}(\Gamma)/\reals$, $q\in H^1(\Gamma)/\reals$
that are uniquely determined up to a constant on each connected part $\Gamma_i$,
$i=0,\dotsc,\beta_2$ of the boundary. It thus suffices to establish that $p=q=0$.
	
We first consider $q$. The fact that $\DIV\vv{D}=0$ on $\wholespace$ implies that
for all $V\in\mathcal{D}(\wholespace)$:
\begin{equation}
\langle\tracediv\vv{s},\gamma V\rangle_\Gamma =
\langle\vv{s},\tracegrad\gamma V\rangle_\Gamma =
\langle\vv{s},\gv{\tau}(\grad V)\rangle_\Gamma =
\langle\Laplacevec\vv{D},\grad V\rangle =
\langle\DIV(\Laplacevec\vv{D}), V\rangle =
\langle\Laplace\DIV\vv{D}, V\rangle = 0,
\end{equation}
that is $\tracediv\vv{s}=0$. This in turn implies $\tracelaplace q =
\tracediv\vv{s} = 0$, and by the coercivity of the Laplace--Beltrami operator
on $H^1(\Gamma)/\reals$ we conclude that $q = 0$.
	
Considering $p$, we note that because $\curl\vv{D}=\gv{0}$ in $\Omega$, we have
$0=(\curl\vv{D})\cdot\vv{n}=\tracecurl\gv{\tau}\vv{D}$ on $\Gamma$. Thus for all
$v\in H^{\frac{1}{2}}(\Gamma)$:
\begin{equation}
0=\langle\tracecurl\gv{\tau}\vv{D},v\rangle_\Gamma
 =\langle\gv{\tau}\vv{D},\tracecurlvec v\rangle_\Gamma
 =\langle\gv{\tau}\NewtonVec\gv{\tau}'\vv{s},\tracecurlvec v\rangle_\Gamma
 =\langle\gv{\tau}\NewtonVec\gv{\tau}'\tracecurlvec p,\tracecurlvec v\rangle_\Gamma.
\end{equation}
The last expression can be enlightened using a more explicit representation.
Following Claeys and Hiptmair,\cite[Equations~(41) and (42)]{claeys2019} under the
additional assumption that $\tracecurlvec p$, $\tracecurlvec v\in\vv{L}^\infty(\Gamma)$,
we have:
\begin{equation}\label{eq:hypersingular BIO}
 \langle\gv{\tau}\NewtonVec\gv{\tau}'\tracecurlvec p,\tracecurlvec v\rangle_\Gamma =
\frac{1}{4\pi}\int_\Gamma\int_\Gamma
\frac{\tracecurlvec p(\vv{y})\cdot\tracecurlvec v(\vv{x})}{|\vv{x}-\vv{y}|}
\,{\mathrm d}S(\vv{y})\,{\mathrm d}S(\vv{x}).
\end{equation}
Here one clearly recognises the hypersingular boundary integral operator
for the scalar Laplace equation.\cite[Section~3.3.4]{sauter2011} This operator
is known to be coercive on $H^{\frac{1}{2}}(\Gamma)/\reals$,\cite[Theorem~3.5.3]{sauter2011}
and we conclude that $p=0$.
\end{proof}

Let us now make some remarks on the case $\beta_1\neq 0$. We define $\Omega^C
\coloneqq\wholespace\setminus\overline{\Omega}$ as the complementary domain of $\Omega$,
and $\vv{B}\coloneqq\curl\vv{D}|_\Omega$ and $\vv{B}^C\coloneqq\curl\vv{D}|_{\Omega^C}$. These
functions are Neumann harmonic fields:
\begin{equation}
\begin{aligned}
	\DIV\vv{B}  &=0, & \curl\vv{B}  &=\gv{0} & &\text{in $\Omega$},   &\quad \vv{B}  \cdot\vv{n}&=0  & &\text{on $\Gamma$},\\
	\DIV\vv{B}^C&=0, & \curl\vv{B}^C&=\gv{0} & &\text{in $\Omega^C$}, &\quad \vv{B}^C\cdot\vv{n}&=0  & &\text{on $\Gamma$}.
\end{aligned}
\end{equation}
Ultimately, the idea is to rely on the fact that in handle-free domains the
space of Neumann harmonic fields only contains the zero element, and thus
$\vv{B}=\gv{0}$ and $\vv{B}^C=\gv{0}$. In the case $\beta_1\neq 0$, however,
neither $\Omega$ nor $\Omega^C$ are handle-free, and in fact we have
$\beta_1^C=\beta_1$. The spaces of Neumann harmonic fields on $\Omega$ and
$\Omega^C$ then each have dimension $\beta_1$.

Buffa has derived the analogue of~\Cref{lem:trace-hodge-decomposition} for the case
of Lipschitz polyhedra with $\beta_1\neq 0$.\cite{buffa2001b} Because $\beta_1
=\beta_1^C$, it contains an additional term from the $2\beta_1$-dimensional
space of harmonic tangential fields. Half of these components are fixed because
of the condition $\curl\vv{A}|_{\Omega}=\vv{U}$, the other half concerns the
external harmonic fields. To ensure uniqueness of~$\vv{A}$, one additionally
needs to prescribe the Neumann harmonic components of $\vv{U}^C\coloneqq%
\curl\vv{A}|_{\Omega^C}$.

\section{Construction of Solutions and Well-posedness}\label{sec:construction}
In this section, we provide a construction for a stream function $\vv{A}%
\in\HlocVec$ for the general case of a given vorticity field
$\vv{F}\in\HOneDualVec$. This construction may also be considered
an alternative proof of the existence results of~%
\Cref{thm:existence of solutions,thm:existence of stream function}.

The idea is to first find a suitable extension $\widetilde{\vv{F}}\in
\vv{H}^{-1}(\wholespace)$ of $\vv{F}$, that additionally satisfies
$\DIV\widetilde{\vv{F}}\in H^{-1}(\wholespace)$. The spurious divergence of
$\widetilde{\vv{F}}$ can then be cancelled out using a surface functional, and
the problem can be solved by applying the Newton operator. In order for this
approach to work it is crucial to make use of \Cref{lmm:identification of spaces}
and to interpret $\vv{F}$ as a member of $\HOcurldual$. Otherwise, one will usually
only obtain $\DIV\widetilde{\vv{F}}\in H^{-2}(\wholespace)$ and the construction
will fail.

In computational practice, one will often have $\vv{F}\in\LebVec$. Under this
assumption we can simplify the construction, yielding an algorithm that is more
easily implementable.

\subsection{General Vorticity Fields}\label{subsec:general-construction}
Let $\vv{F}\in\HOneDualVec$ be given and suppose that it fulfils the integrability
condition \eqref{eqn:integrability condition}, or the equivalent conditions~%
\eqref{eqn:intcond1} and~\eqref{eqn:intcond2}. Furthermore, let $g\in
H^{-\frac{1}{2}}(\Gamma)$ be given boundary data such that $\langle g,
1\rangle_{\Gamma_i}=0$, $i=0,\dots,\beta_2$. Because of \Cref{lmm:identification
of spaces}, we also have $\vv{F}\in\HOcurldual$. Let $\vv{R}\in\HOcurl$ denote
the Riesz representative of $\vv{F}$, i.\,e., the uniquely determined function
$\vv{R}$ such that for all $\vv{V}\in\HOcurl$:
\begin{equation}
\underbrace{\int_\Omega \vv{R}\cdot\vv{V}\,+\,\curl\vv{R}\cdot\curl\vv{V}\,{\mathrm d}\vv{x}}_{%
 =:\mathfrak{B}(\vv{R},\vv{V})}=
\langle\vv{F},\vv{V}\rangle.
\end{equation}

The expression $\mathfrak{B}(\vv{R},\vv{V})$ is not only well-defined
for $\vv{V}\in\HOcurl$, but also for any $\vv{V}\in\vv{H}_0(\curl;\wholespace)$.
We thus define $\widetilde{\vv{F}}$ as follows:
\begin{equation}\label{eqn:tildeF-def}
\forall\vv{V}\in\vv{H}_0(\curl;\wholespace):\ \langle\widetilde{\vv{F}},\vv{V}\rangle
\coloneqq \mathfrak{B}(\vv{R},\vv{V}).
\end{equation}
Obviously $\widetilde{\vv{F}}$ extends $\vv{F}$ and is compactly supported with
$\supp\widetilde{\vv{F}}\subset\overline{\Omega}$. Additionally we also immediately
obtain that $\Vert\widetilde{\vv{F}}\Vert_{\vv{H}^{-1}(\wholespace)}\lesssim
\Vert\vv{F}\Vert_{\vv{H}^{-1}(\Omega)}$. This extension does not necessarily fulfil
$\DIV\widetilde{\vv{F}}=0$ on all of $\wholespace$. However, the following result is
useful.
\begin{lemma}
	One has $\DIV\widetilde{\vv{F}}\in H^{-1}(\wholespace)$. Moreover, there exists
	a uniquely determined surface functional $f\in H^{-\frac{1}{2}}(\Gamma)$
	such that:
	\begin{equation}
		\langle\DIV\widetilde{\vv{F}},V\rangle = -\langle f,\gamma V\rangle_\Gamma
		\qquad\forall V\in H^1(\wholespace),
	\end{equation}
	and
	\begin{equation}
	\left\{
		\begin{aligned}
			&\langle f, 1\rangle_{\Gamma_i} = 0,\qquad i=0,\ldots,\beta_2, \\
			&\Vert f\Vert_{H^{-\frac{1}{2}}(\Gamma)}\lesssim\Vert\vv{F}\Vert_{\HOneDualVec}. &
		\end{aligned}\right.
	\end{equation}
\end{lemma}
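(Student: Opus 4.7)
The plan is to read the structure of $\DIV\widetilde{\vv{F}}$ directly off from the defining identity \eqref{eqn:tildeF-def} by probing with gradients. For any $V\in H^1(\wholespace)$, the field $\gradp V\in\vv{L}^2(\wholespace)$ is curl-free and therefore lies in $\vv{H}_0(\curl;\wholespace)$, so the definition of $\widetilde{\vv{F}}$ applies and yields
\[
\langle\DIV\widetilde{\vv{F}},V\rangle = -\langle\widetilde{\vv{F}},\gradp V\rangle = -\mathfrak{B}(\vv{R},\gradp V) = -\int_\Omega\vv{R}\cdot\gradp V\,\dd\vv{x},
\]
with the curl-term vanishing identically. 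The right-hand side is bounded by $\Vert\vv{R}\Vert_{\LebVec}\Vert V\Vert_{H^1(\wholespace)}\lesssim\Vert\vv{F}\Vert_{\HOneDualVec}\Vert V\Vert_{H^1(\wholespace)}$, which already gives $\DIV\widetilde{\vv{F}}\in H^{-1}(\wholespace)$.

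Next I would show that $\DIV\widetilde{\vv{F}}$ annihilates $\ker\gamma$ and hence factors through the trace. If $\gamma V=0$ then $V|_\Omega\in\HOneZero$, so it may be approximated by $\testfcts$ in $\HOne$; the corresponding sequence of gradients converges in $\Hcurl$ to $\gradp V|_\Omega$, which therefore lies in $\ker\curl\big\vert_{\HOcurl}$. By \Cref{lmm:identification of spaces}, the integrability condition \eqref{eqn:integrability condition} is equivalent to $\vv{F}$ annihilating this kernel, so $\mathfrak{B}(\vv{R},\gradp V|_\Omega)=\langle\vv{F},\gradp V|_\Omega\rangle=0$. Since $\gamma:H^1(\wholespace)\to H^{\frac{1}{2}}(\Gamma)$ is surjective with continuous right inverse, $\DIV\widetilde{\vv{F}}$ factors uniquely through $\gamma$, yielding a well-defined $f\in H^{-\frac{1}{2}}(\Gamma)$ with $\langle\DIV\widetilde{\vv{F}},V\rangle=-\langle f,\gamma V\rangle_\Gamma$ and $\Vert f\Vert_{H^{-\frac{1}{2}}(\Gamma)}\lesssim\Vert\vv{F}\Vert_{\HOneDualVec}$.

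For the vanishing-average conditions, I would plug the cutoff functions $T_i$ from \eqref{def: Ti} into the identity above. For $i=1,\dotsc,\beta_2$, the field $\gradp T_i|_\Omega$ is compactly supported in $\Omega$ and curl-free, hence lies in $\testfctsvec\cap\ker\curl\big\vert_{\HOneZeroVec}$. Condition \eqref{eqn:intcond2} then gives $\int_\Omega\vv{R}\cdot\gradp T_i\,\dd\vv{x}=\mathfrak{B}(\vv{R},\gradp T_i|_\Omega)=\langle\vv{F},\gradp T_i|_\Omega\rangle=0$. Because $\gamma T_i$ equals $1$ on $\Gamma_i$ and $0$ on every other component of $\Gamma$, this reads exactly $\langle f,1\rangle_{\Gamma_i}=0$. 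For $i=0$ one instead plugs in any $V\in C_0^\infty(\wholespace)$ equal to $1$ on a ball containing $\overline{\Omega}$: then $\gradp V$ vanishes on $\Omega$, so $\langle f,1\rangle_\Gamma=\langle f,\gamma V\rangle_\Gamma=0$, and combined with the preceding step this forces $\langle f,1\rangle_{\Gamma_0}=0$.

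The only genuine subtlety is in the second step: it is crucial to reinterpret $\vv{F}$ as an element of $\HOcurldual$ via \Cref{lmm:identification of spaces}, since the approximation argument produces test elements in $\HOcurl$ rather than $\HOneZeroVec$; without this refinement one is left with the weaker conclusion $\DIV\widetilde{\vv{F}}\in H^{-2}(\wholespace)$, exactly as the introductory paragraph of this section anticipates.
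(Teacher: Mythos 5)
Your proposal is correct and follows essentially the same route as the paper: probe $\DIV\widetilde{\vv{F}}$ with gradients via $\mathfrak{B}(\vv{R},\cdot)$ to get the $H^{-1}(\wholespace)$ bound, use the integrability condition together with \Cref{lmm:identification of spaces} to show the functional factors through the Dirichlet trace, and test with the cutoffs $T_i$ for the vanishing averages. The only (harmless) deviation is your treatment of $i=0$, where you deduce $\langle f,1\rangle_{\Gamma_0}=0$ from $\langle f,1\rangle_\Gamma=0$ and the cases $i\geq 1$, whereas the paper handles all $i=0,\dotsc,\beta_2$ uniformly by testing with a lifting of $\gamma T_i$ and invoking the identification with $\HOcurldual$.
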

\begin{proof}
First note that $\forall V\in\wholetestfcts$:
\begin{equation}
\langle\DIV\widetilde{\vv{F}},V\rangle =
\langle\widetilde{\vv{F}},\grad V\rangle =
\mathfrak{B}(\vv{R},\grad V) =
\int_\Omega\vv{R}\cdot(\grad V)\,{\mathrm d}\vv{x}
\leq\Vert\vv{R}\Vert_{\LebVec}\Vert\grad V\Vert_{\LebVec}
\leq\Vert\vv{F}\Vert_{\HOcurl'}\Vert V\Vert_{H^1(\Omega)}.
\end{equation}
The distribution $\DIV\widetilde{\vv{F}}\in\wholedistris$ thus admits a unique
continuous extension to $\overline{\wholetestfcts}^{\Vert\cdot\Vert_{H^1(%
\wholespace)}}= H^1(\wholespace)$, and we may write $\DIV\widetilde{\vv{F}}%
\in H^{-1}(\wholespace)$ with $\Vert\DIV\widetilde{\vv{F}}\Vert_{H^{-1}(\wholespace)}
\leq \Vert\vv{F}\Vert_{\HOcurl'}\lesssim \Vert\vv{F}\Vert_{\HOneDualVec}$.
	
Next, we find that the value $\langle\DIV\widetilde{\vv{F}},V\rangle$ only
depends on the Dirichlet trace $\gamma V\in H^{\frac{1}{2}}(\Gamma)$ of the
trial function $V\in H^1(\wholespace)$. To see this, let $V_1,V_2\in
H^1(\wholespace)$ have the same Dirichlet trace, $\gamma V_1 = \gamma V_2$.
Because $\gamma(V_1-V_2)=0$, one finds that $-\nabla(V_1-V_2)|_\Omega\in\HOcurl$,
and thus:
\begin{equation}
\langle \DIV\widetilde{\vv{F}}, V_1\rangle -
\langle \DIV\widetilde{\vv{F}}, V_2\rangle =
\mathfrak{B}\bigl(\vv{R},-\nabla( V_1-V_2 ) \bigr) 
= \langle \vv{F}, -\nabla(V_1-V_2)|_\Omega\rangle \stackrel{\eqref{eqn:integrability condition}}{=} 0.
\end{equation}
We may thus define $f\in H^{-\frac{1}{2}}(\Gamma)$ as follows:
\begin{equation}
\forall v\in H^{\frac{1}{2}}(\Gamma):\qquad 
\langle f,v\rangle_\Gamma \coloneqq -\langle\DIV\widetilde{\vv{F}},\gamma^{-1}v\rangle,
\end{equation}
where $\gamma^{-1}: H^{\frac{1}{2}}(\Gamma)\to H^1(\wholespace)$ is fixed, but
may be any linear and bounded lifting operator. Clearly, we have $\Vert f 
\Vert_{H^{-\frac{1}{2}}(\Gamma)}\lesssim\Vert\vv{F}\Vert_{\HOneDualVec}$, because
\begin{equation}
\langle f, v\rangle_\Gamma =  -\langle\DIV\widetilde{\vv{F}},\gamma^{-1}v\rangle
\leq \Vert\vv{F}\Vert_{\HOneDualVec}\Vert\gamma^{-1}v\Vert_{H^1(\wholespace)}
\leq \Vert\vv{F}\Vert_{\HOneDualVec}\Vert\gamma^{-1}\Vert_{H^{\frac{1}{2}}(\Gamma)\to
H^1(\wholespace)}\Vert v\Vert_{H^{\frac{1}{2}}(\Gamma)}
\end{equation}
for all $v\in H^{\frac{1}{2}}(\Gamma)$.
	
Finally, for $i=0,\ldots,\beta_2$, we have
\begin{align}
 \langle f, 1\rangle_{\Gamma_i} = 
 \langle f,\gamma T_i\rangle_\Gamma =
-\langle\DIV\widetilde{\vv{F}},\gamma^{-1}T_i\rangle =
 \langle\vv{F},\nabla\gamma^{-1}T_i|_\Omega\rangle\stackrel{\eqref{eqn:integrability condition}}{=} 0.
\end{align}
\end{proof}

As a consequence of the preceding lemma we may define $q\in H^1(\Gamma)/\reals$,
uniquely up to a constant on each connected component of the boundary
$\Gamma_i$, $i=0,\ldots,\beta_2$, as the solution to the Laplace--Beltrami
equation:
\begin{equation}
	\tracelaplace q = f\qquad\text{on $\Gamma$},
\end{equation}
and furthermore define
$\widehat{\vv{F}}\coloneqq\widetilde{\vv{F}} - \gv{\tau}'\nabla_\Gamma q$,
and
$\widehat{\vv{A}}\coloneqq\NewtonVec\widehat{\vv{F}}.$
\begin{lemma}\label{lem:fhat}
One has $\widehat{\vv{F}}\in\vv{H}^{-1}(\wholespace)\cap\compdistrisvec$,
$\widehat{\vv{A}}\in\HlocVec$ decaying to zero at infinity, and moreover:
\begin{equation}
\left\lbrace
\begin{aligned}
 \Vert\widehat{\vv{A}}\Vert_{\HOneVec}                 \lesssim\;
&\Vert\widehat{\vv{F}}\Vert_{\vv{H}^{-1}(\wholespace)} \lesssim
 \Vert\vv{F}\Vert_{\HOneDualVec}, &\qquad
\Laplacevec\widehat{\vv{A}} &= \widehat{\vv{F}} = \vv{F} &   &\text{in }\Omega,\\
\DIV\widehat{\vv{A}} = &\DIV\widehat{\vv{F}}=0\quad \text{in }\wholespace, &
\Laplacevec\widehat{\vv{A}} &= \widehat{\vv{F}} = \vv{0} & &\text{in }\wholespace\setminus\overline{\Omega}.
\end{aligned}
\right.
\end{equation}
\end{lemma}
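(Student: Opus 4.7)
The plan is to verify the listed properties in a sequence that mirrors how $\widehat{\vv{F}}$ was constructed, deferring the bulk of the work to the identity $\Laplacevec\circ\NewtonVec=\mathrm{id}$.

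First I would show that $\widehat{\vv{F}}\in\vv{H}^{-1}(\wholespace)\cap\compdistrisvec$. The extension $\widetilde{\vv{F}}$ already satisfies these properties by construction, with $\supp\widetilde{\vv{F}}\subset\overline{\Omega}$ and $\|\widetilde{\vv{F}}\|_{\vv{H}^{-1}(\wholespace)}\lesssim\|\vv{F}\|_{\HOneDualVec}$. For the surface term, I would observe that since $q\in H^1(\Gamma)/\reals$ one has $\nabla_\Gamma q\in\vv{L}^2(\Gamma)\hookrightarrow\HTMinusCurl$, so $\gv{\tau}'\nabla_\Gamma q\in\HOneCompVec{}'$ is compactly supported on $\Gamma\subset\overline{\Omega}$ with norm controlled by $\|q\|_{H^1(\Gamma)/\reals}$. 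Coercivity of $\tracelaplace$ on $H^1(\Gamma)/\reals$ together with the preceding lemma then gives $\|q\|_{H^1(\Gamma)/\reals}\lesssim\|f\|_{H^{-\frac{1}{2}}(\Gamma)}\lesssim\|\vv{F}\|_{\HOneDualVec}$, yielding the chain $\|\widehat{\vv{F}}\|_{\vv{H}^{-1}(\wholespace)}\lesssim\|\vv{F}\|_{\HOneDualVec}$.

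Next I would check that $\DIV\widehat{\vv{F}}=\gv{0}$ globally, which is the crux of the whole construction. For $V\in\wholetestfcts$, using that the tangential trace of $\nabla V$ equals $\nabla_\Gamma\gamma V$,
\begin{equation}
\langle\DIV\widehat{\vv{F}},V\rangle
= \langle\DIV\widetilde{\vv{F}},V\rangle - \langle\nabla_\Gamma q,\nabla_\Gamma\gamma V\rangle_\Gamma
= -\langle f,\gamma V\rangle_\Gamma + \langle\tracelaplace q,\gamma V\rangle_\Gamma = 0,
\end{equation}
because $\tracelaplace q = f$ by definition of $q$. Care with signs is needed here because the paper's $\tracegrad$ and $\tracelaplace$ carry the $-$ convention.

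With these facts in hand, the remaining properties are essentially immediate. Because $\widehat{\vv{F}}$ is a compactly supported member of $\vv{H}^{-1}(\wholespace)$, the mapping property of the Newton potential gives $\widehat{\vv{A}}\in\HlocVec$ together with $\|\widehat{\vv{A}}\|_{\HOneVec}\lesssim\|\widehat{\vv{F}}\|_{\vv{H}^{-1}(\wholespace)}$; decay at infinity follows from the standard asymptotics of Newton potentials. Since $\NewtonVec$ commutes with differentiation, $\Laplacevec\widehat{\vv{A}}=\widehat{\vv{F}}$ on all of $\wholespace$, and $\DIV\widehat{\vv{A}}=\Newton\DIV\widehat{\vv{F}}=0$. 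Finally, when restricted to test functions in $\testfctsvec$ the surface term $\gv{\tau}'\nabla_\Gamma q$ vanishes (test functions have zero trace), so $\widehat{\vv{F}}|_\Omega = \widetilde{\vv{F}}|_\Omega = \vv{F}$, which gives $\Laplacevec\widehat{\vv{A}}=\vv{F}$ in $\Omega$; outside $\overline{\Omega}$ the distribution $\widehat{\vv{F}}$ vanishes since both $\widetilde{\vv{F}}$ and $\gv{\tau}'\nabla_\Gamma q$ are supported in $\overline{\Omega}$, whence $\Laplacevec\widehat{\vv{A}}=\vv{0}$ in $\wholespace\setminus\overline{\Omega}$.

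The only genuinely delicate step is the divergence identity: one must recognize precisely that $f$ was built so as to encode the boundary obstruction to $\DIV\widetilde{\vv{F}}=0$, and that correcting by $\gv{\tau}'\nabla_\Gamma q$ with $\tracelaplace q=f$ exactly cancels this obstruction. Everything else reduces to the characterising property of the Newton potential and routine continuity estimates.
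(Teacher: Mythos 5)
Your proposal is correct and follows essentially the same route as the paper: establish the $\vv{H}^{-1}(\wholespace)$ bound for the surface correction via coercivity of $\tracelaplace$, verify $\DIV\widehat{\vv{F}}=0$ by testing against gradients of scalar test functions and using $\tracelaplace q = f$, and let the mapping properties of the Newton operator deliver everything claimed about $\widehat{\vv{A}}$. The only blemish is the displayed divergence computation, where two sign slips cancel: with $\widehat{\vv{F}}=\widetilde{\vv{F}}-\gv{\tau}'\nabla_\Gamma q$ and the paper's convention $\langle\tracelaplace q,v\rangle_\Gamma = +\langle\nabla_\Gamma q,\nabla_\Gamma v\rangle_\Gamma$, the middle term should read $+\langle\nabla_\Gamma q,\nabla_\Gamma\gamma V\rangle_\Gamma = \langle\tracelaplace q,\gamma V\rangle_\Gamma = \langle f,\gamma V\rangle_\Gamma$, which then cancels the $-\langle f,\gamma V\rangle_\Gamma$ coming from $\DIV\widetilde{\vv{F}}$ exactly as you intend.
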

\begin{proof}
It suffices to establish the properties of $\widehat{\vv{F}}$; the results for
$\widehat{\vv{A}}$ then immediately follow from the properties of $\NewtonVec$.
We already established that $\widetilde{\vv{F}}\in\vv{H}^{-1}(\wholespace)$ and
$\Vert\widetilde{\vv{F}}\Vert_{\vv{H}^{-1}(\wholespace)} \lesssim
\Vert\vv{F}\Vert_{\HOneDualVec}$. For the surface functional,
\begin{equation}
\Vert\nabla_\Gamma q\Vert_{\vv{L}^2(\Gamma)}\leq
\Vert q\Vert_{H^1(\Gamma)}\lesssim
\Vert f\Vert_{H^{-1}(\Gamma)}\lesssim
\Vert f\Vert_{H^{-\frac{1}{2}}(\Gamma)}\lesssim
\Vert\vv{F}\Vert_{\HOneDualVec},
\end{equation}
so that $\forall\vv{V}\in\wholetestfctsvec$:\ \ 
\begin{math}
\langle \nabla_\Gamma q, \gv{\tau}\vv{V}\rangle_\Gamma \lesssim
\Vert\vv{F}\Vert_{\HOneDualVec}\Vert\vv{V}\Vert_{\vv{H}^1(\wholespace)}.
\end{math}
Thus $-\gv{\tau}'\nabla_\Gamma q\in\vv{H}^{-1}(\wholespace)$ with 
\begin{math}
\Vert-\gv{\tau}'\nabla_\Gamma q\Vert_{\vv{H}^{-1}(\wholespace)}
\lesssim\Vert\vv{F}\Vert_{\HOneDualVec}.
\end{math}
	
The fact that $\widehat{\vv{F}}=\gv{0}$ on $\wholespace\setminus\overline{\Omega}$
and $\widehat{\vv{F}}=\vv{F}$ on $\Omega$ is obvious. For the divergence, we note
that $\forall V\in\wholetestfcts$:
\begin{align}
\langle \nabla_\Gamma q, \gv{\tau}\nabla V\rangle_\Gamma =
\langle \nabla_\Gamma q, \nabla_\Gamma\gamma V\rangle_\Gamma =
\langle -\Delta_\Gamma q,\gamma V\rangle_\Gamma =
\langle f,\gamma V\rangle_\Gamma =
 -\langle\DIV\widetilde{\vv{F}},V\rangle, 
\end{align}
and therefore $\DIV\widehat{\vv{F}}=0$.
\end{proof}

With these properties in place, it immediately follows that
$\widehat{\vv{U}}\coloneqq\curl\widehat{\vv{A}}$ solves the div-curl system~%
\eqref{eqn:div-curl-system}, but does not necessarily fulfil
$\widehat{\vv{U}}\cdot\vv{n}=g$ on $\Gamma$.  To fix its normal
component, it then suffices to solve the hypersingular boundary
integral equation:
\begin{equation}\label{eqn:hypersingular-bie}
	\forall v\in H^{\frac{1}{2}}(\Gamma):\quad
	\langle\gv{\tau}\NewtonVec\gv{\tau}'\tracecurlvec p,\tracecurlvec v\rangle_\Gamma
	= \langle g-\curl\widehat{\vv{A}}\cdot\mathbf{n},v\rangle_\Gamma,
\end{equation}
for the unknown $p\in H^{\frac{1}{2}}(\Gamma)/\reals$. This problem is known
to be well-posed, and its solution continuously depends on $\widehat{\vv{U}}\cdot\vv{n}$
and $g$:\cite[Theorem~3.5.3]{sauter2011}
\begin{equation}
	\Vert p\Vert_{H^{\frac{1}{2}}(\Gamma)} \lesssim
	\Vert\widehat{\vv{U}}\cdot\vv{n} - g\Vert_{H^{-\frac{1}{2}}(\Gamma)}\lesssim
	\Vert\widehat{\vv{U}}\Vert_{\Hdiv} + \Vert g\Vert_{H^{-\frac{1}{2}}(\Gamma)}
	\lesssim\Vert\widehat{\vv{A}}\Vert_{\vv{H}^1(\Omega)} + \Vert g\Vert_{H^{-\frac{1}{2}}(\Gamma)}
	\lesssim\Vert\vv{F}\Vert_{\HOneDualVec} + \Vert g\Vert_{H^{-\frac{1}{2}}(\Gamma)}.
\end{equation}

We now finally define:
\begin{equation}\label{eqn:construction}
\left\lbrace
\begin{aligned}
\vv{s} &\coloneqq \tracecurlvec p-\nabla_\Gamma q\in\HRMinusDiv,\\
\vv{A} &\coloneqq\NewtonVec\bigl(\widetilde{\vv{F}}+\gv{\tau}'\vv{s}\bigr)\in\HlocVec,\\
\vv{U} &\coloneqq\curl\vv{A}\in\vv{L}^2(\wholespace).
\end{aligned}
\right.
\end{equation}

Then $\vv{U}$ solves the div-curl system and $\vv{U}\cdot\vv{n}=g$ on $\Gamma$,
and $\vv{A}$ is a stream function for $\vv{U}$. In the case of a handle-free
domain, \Cref{thm:uniqueness of velocity field,thm:uniqueness of stream function}
guarantee that these functions are unique. Moreover, the solution continuously
depends on $\vv{F}$ and $g$. In total we have therefore proven the following theorem.

\begin{theorem}\label{thm:well-posedness}
Let $\vv{F}\in\HOneDualVec$ be given and fulfil the integrability condition
\eqref{eqn:integrability condition}, or the equivalent conditions~%
\eqref{eqn:intcond1} and~\eqref{eqn:intcond2}. Furthermore, let $g\in
H^{-\frac{1}{2}}(\Gamma)$ be given, such that $\langle g, 1\rangle_{\Gamma_i}=0$
for all $i=0,\dots,\beta_2$.
	
Then a solution to the div-curl system~\eqref{eqn:div-curl-system} with
$\vv{U}\cdot\vv{n}=g$ on $\Gamma$, and its associated stream function $\vv{A}$ are
given by \eqref{eqn:construction}. In case of a handle-free domain $\vv{U}$
and $\vv{A}$ are the uniquely determined functions from
\Cref{thm:uniqueness of velocity field,thm:uniqueness of stream function}.

These functions linearly and continuously depend on the data $\vv{F}$ and $g$,
and we have:
\begin{equation}
\Vert\vv{U}\Vert_{\LebVec}\lesssim\Vert\vv{A}\Vert_{\vv{H}^1(\Omega)}
\lesssim \Vert\vv{F}\Vert_{\HOneDualVec} + \Vert g\Vert_{H^{-\frac{1}{2}}(\Gamma)}.
\end{equation}
\end{theorem}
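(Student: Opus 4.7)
The plan is to verify, piece by piece, that the construction \eqref{eqn:construction} produces functions with all the properties claimed, and then to chain together the continuity estimates already proved above. Nearly all the hard analytic work has been carried out in \Cref{lem:fhat} and in the discussion of the hypersingular boundary integral equation \eqref{eqn:hypersingular-bie}; what remains is essentially bookkeeping.

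First I would check the divergence-free property of $\vv{A}$ on all of $\wholespace$. Writing $\vv{A}=\widehat{\vv{A}}+\NewtonVec\gv{\tau}'\tracecurlvec p$, \Cref{lem:fhat} already gives $\DIV\widehat{\vv{A}}=0$. For the remaining piece, the surface identity $\tracediv\circ\tracecurlvec\equiv 0$ implies $\DIV(\gv{\tau}'\tracecurlvec p)=-\gamma'\tracediv\tracecurlvec p=0$, so that $\DIV\NewtonVec\gv{\tau}'\tracecurlvec p = \Newton\DIV\gv{\tau}'\tracecurlvec p = 0$. Hence $\DIV\vv{A}=0$ globally. This in turn gives $\curl\vv{U}=\curl\curl\vv{A}=\Laplacevec\vv{A}=\widetilde{\vv{F}}+\gv{\tau}'\vv{s}$, and since the surface functional $\gv{\tau}'\vv{s}$ is supported on $\Gamma$, restricting to $\Omega$ yields $\curl\vv{U}=\vv{F}$. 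Automatically $\DIV\vv{U}=\DIV\curl\vv{A}=0$, so the div-curl system is satisfied.

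Next I would verify the boundary condition $\vv{U}\cdot\vv{n}=g$. The identity $\vv{A}-\widehat{\vv{A}} = \NewtonVec\gv{\tau}'\tracecurlvec p$ gives $\vv{U}\cdot\vv{n}=\widehat{\vv{U}}\cdot\vv{n}+\curl(\NewtonVec\gv{\tau}'\tracecurlvec p)\cdot\vv{n}$. Testing the normal trace against $v\in H^{\frac{1}{2}}(\Gamma)$ and using $\nu\circ\curl = \tracecurl\circ\gv{\tau}$ together with the duality $\langle\tracecurl\cdot,v\rangle_\Gamma=\langle\cdot,\tracecurlvec v\rangle_\Gamma$, the second term equals $\langle\gv{\tau}\NewtonVec\gv{\tau}'\tracecurlvec p,\tracecurlvec v\rangle_\Gamma$. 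Then \eqref{eqn:hypersingular-bie} yields exactly $\vv{U}\cdot\vv{n}=g$ in $H^{-\frac{1}{2}}(\Gamma)$. For the claim that $\vv{A}$ is the stream function, I would observe that $\vv{A}\in\HlocVec$, $\DIV\vv{A}=0$ on $\wholespace$, $\Laplacevec\vv{A}=\gv{0}$ outside $\overline{\Omega}$ (since both $\widetilde{\vv{F}}$ and $\gv{\tau}'\vv{s}$ are supported in $\overline{\Omega}$), and $\vv{A}\to\gv{0}$ at infinity from the Newton potential's decay. In the handle-free case, this places $\vv{A}$ in the hypotheses of \Cref{thm:uniqueness of stream function}, and $\vv{U}$ in those of \Cref{thm:uniqueness of velocity field}, which delivers uniqueness.

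Finally, the continuous dependence is a concatenation of previously displayed estimates. \Cref{lem:fhat} gives $\Vert\widehat{\vv{A}}\Vert_{\HOneVec}\lesssim\Vert\vv{F}\Vert_{\HOneDualVec}$, and the inequality already displayed after \eqref{eqn:hypersingular-bie} provides $\Vert p\Vert_{H^{\frac{1}{2}}(\Gamma)}\lesssim\Vert\vv{F}\Vert_{\HOneDualVec}+\Vert g\Vert_{H^{-\frac{1}{2}}(\Gamma)}$. Continuity of $\tracecurlvec:H^{\frac{1}{2}}(\Gamma)\to\HRMinusDiv$, of $\gv{\tau}':\HRMinusDiv\to \vv{H}^{-1}(\wholespace)$, and the $\vv{H}^{-1}\to\vv{H}^1_{\mathrm{loc}}$ mapping property of $\NewtonVec$ then yield $\Vert\NewtonVec\gv{\tau}'\tracecurlvec p\Vert_{\HOneVec}\lesssim\Vert p\Vert_{H^{\frac{1}{2}}(\Gamma)}$. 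Adding the two contributions gives the bound on $\Vert\vv{A}\Vert_{\HOneVec}$, and $\Vert\vv{U}\Vert_{\LebVec}=\Vert\curl\vv{A}\Vert_{\LebVec}\leq\Vert\vv{A}\Vert_{\HOneVec}$ closes the chain. No step is a genuine obstacle; the only point requiring care is the identification $\nu\circ\curl=\tracecurl\circ\gv{\tau}$ (and that the cancellation in $\DIV\widehat{\vv{F}}$ used to define $q$ translates correctly after applying $\NewtonVec$), since a sign or duality mismatch there would corrupt both the boundary condition and the divergence-free property of $\vv{A}$.
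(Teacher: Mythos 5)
Your proposal is correct and follows essentially the same route as the paper: the theorem is proved there by the construction itself (the extension $\widetilde{\vv{F}}$, \Cref{lem:fhat}, and the hypersingular equation \eqref{eqn:hypersingular-bie}), and your verification of $\DIV\vv{A}=0$, the restriction $\Laplacevec\vv{A}|_\Omega=\vv{F}$, the normal-trace identity $\nu\circ\curl=\tracecurl\circ\gv{\tau}$, and the chained norm bounds is exactly the bookkeeping the paper leaves implicit.
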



\subsection{Square-integrable Vorticity Fields}
In case we actually have $\vv{F}\in\LebVec$, the construction can be simplified.
Thus, let $\vv{F}\in\LebVec$ fulfil the integrability conditions~%
\eqref{eqn:intcond1} and~\eqref{eqn:intcond3}. We first note that $\vv{F}$ now
possesses a natural extension by zero:
\begin{equation}\label{eqn:zero-extension-l2}
	\widetilde{\vv{F}}: \wholespace\to\wholespace,\quad\vv{x}\mapsto
	\begin{cases}
		\vv{F}(\vv{x}) & \vv{x}\in\Omega,\\
		\gv{0}         & \text{else.}
	\end{cases}
\end{equation}

Next, we note that because $\vv{F}\in\LebVec$ and $\DIV\vv{F}=0$ in $\Omega$, we
also have $\vv{F}\in\Hdiv$. Thus $\vv{F}$ has a normal trace $\vv{F}\cdot\vv{n}\in
H^{-\frac{1}{2}}(\Gamma)$, that by condition~\eqref{eqn:intcond3} satisfies
$\langle \vv{F}\cdot\vv{n},1\rangle_{\Gamma_i}=0$, $i=0,\ldots,\beta_2$. One
then has $\DIV\widetilde{\vv{F}} = -\gamma'(\vv{F}\cdot\vv{n})$, so we may instead
define $q\in H^1(\Gamma)/\reals$ as the solution to the Laplace--Beltrami equation
\begin{equation}
\tracelaplace q = \vv{F}\cdot\vv{n}\qquad\text{on $\Gamma$}.
\end{equation}

We then let $\widehat{\vv{F}}\coloneqq\widetilde{\vv{F}}-\gv{\tau}'\nabla_\Gamma q$
as before and note that \Cref{lem:fhat} holds. The term $\gv{\tau}'(\tracegrad q)$
is the correction to the Biot--Savart law mentioned \cref{sec:problematic-approaches}.
From this point the construction proceeds as before.

\section{Regularity}
We begin this section by recalling a result of Costabel.\cite{costabel1990}
\begin{lemma}\label{lem:costabel-regularity}
Let $\Omega\subset\wholespace$ denote a handle-free, bounded Lipschitz
domain and let $\vv{U}\in\LebVec$ fulfil:
\begin{equation}
\DIV\vv{U}\in L^2(\Omega),\ \curl\vv{U}\in\LebVec.
\end{equation}
Then $\vv{U}$ satisfies $\vv{U}\cdot\vv{n}\in L^2(\Gamma)$ on $\Gamma$ if and
only if $\vv{U}\times\vv{n}\in\vv{L}^2(\Gamma)$; and in this case $\vv{U}$
fulfils $\vv{U}\in\vv{H}^{\frac{1}{2}}(\Omega)$.
\end{lemma}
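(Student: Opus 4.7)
The plan is to reduce the vector regularity statement to scalar Poisson problems via a Helmholtz-type splitting, and then invoke the sharp elliptic regularity of Jerison--Kenig on bounded Lipschitz domains.

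First, assuming $\vv{U}\cdot\vv{n}\in L^2(\Gamma)$, I would absorb the normal trace into a scalar potential by solving the Neumann problem
\begin{equation*}
\Laplace\phi = -\DIV\vv{U}\quad\text{in }\Omega,\qquad \gradp\phi\cdot\vv{n} = \vv{U}\cdot\vv{n}\quad\text{on }\Gamma.
\end{equation*}
The compatibility $\langle\vv{U}\cdot\vv{n},1\rangle_\Gamma=\int_\Omega\DIV\vv{U}$ is automatic. Because the right-hand side lies in $L^2(\Omega)$ and the Neumann datum in $L^2(\Gamma)$, the Jerison--Kenig regularity theorem for the Laplacian on a bounded Lipschitz domain yields $\phi\in H^{3/2}(\Omega)$ and consequently $\gradp\phi\in\vv{H}^{1/2}(\Omega)$.

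Second, I would study the remainder $\vv{V}\coloneqq\vv{U}-\gradp\phi$. It is divergence-free in $\Omega$ with $\curl\vv{V}=\curl\vv{U}\in\LebVec$, and its normal trace vanishes on $\Gamma$. Extending $\vv{V}$ to $\wholespace$ by zero yields $\widetilde{\vv{V}}\in\vv{L}^2(\wholespace)$ which remains globally divergence-free (no surface distribution arises, since $\vv{V}\cdot\vv{n}=0$). The construction of \Cref{thm:existence of stream function} then furnishes a potential $\vv{\psi}\coloneqq\NewtonVec\curl\widetilde{\vv{V}}$ with $\curl\vv{\psi}=\widetilde{\vv{V}}$ and $\DIV\vv{\psi}=0$ on $\wholespace$. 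The subtle point is that $\curl\widetilde{\vv{V}}$ contains a surface contribution supported on $\Gamma$ coming from the tangential jump of $\vv{V}$; one controls this contribution in $\vv{H}^{-1/2}(\Gamma)$ and invokes the mapping properties of $\NewtonVec$ together with a further scalar regularity step at the boundary to conclude $\vv{\psi}\in\vv{H}^{3/2}_{\mathrm{loc}}(\wholespace)$. Therefore $\vv{V}=\curl\vv{\psi}|_\Omega\in\vv{H}^{1/2}(\Omega)$, and combining gives $\vv{U}=\gradp\phi+\vv{V}\in\vv{H}^{1/2}(\Omega)$. The reverse implication $\vv{U}\times\vv{n}\in\vv{L}^2(\Gamma)\Rightarrow\vv{U}\in\vv{H}^{1/2}(\Omega)$ is symmetric: one absorbs the tangential trace into a Dirichlet scalar problem and exchanges the roles of $\DIV$ and $\curl$. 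The equivalence of the two trace conditions then follows a posteriori, since once $\vv{U}\in\vv{H}^{1/2}(\Omega)$ with $\DIV\vv{U},\curl\vv{U}\in L^2$, the surface Hodge decomposition of \Cref{lem:trace-hodge-decomposition} applied to $\gv{\tau}\vv{U}$ and $\gv{\rho}\vv{U}$ forces both components to share the same $L^2(\Gamma)$ regularity.

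The main obstacle is the appeal to sharp scalar elliptic regularity on Lipschitz domains, that is, Jerison--Kenig's theorem that solutions of the Poisson equation with $L^2(\Omega)$ right-hand side and $L^2(\Gamma)$ Dirichlet or Neumann data lie in $H^{3/2}(\Omega)$. This is a deep analytic result resting on harmonic-measure estimates, and the exponent $3/2$ is optimal in the absence of further smoothness of $\Gamma$; it is precisely this optimality that dictates the $\vv{H}^{1/2}$ exponent in the conclusion and explains why the result cannot be improved on a general Lipschitz domain without extra assumptions on $\Gamma$.
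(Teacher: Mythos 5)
The paper offers no proof of this lemma at all: it is imported verbatim from Costabel \cite{costabel1990} and used as a black box, so there is no internal argument to compare yours against. Judged on its own merits, your sketch reproduces the right circle of ideas (layer potentials, sharp $H^{\frac{3}{2}}$ elliptic regularity on Lipschitz domains), but it has two genuine gaps. First, the second step is circular. After subtracting $\gradp\phi$ you extend $\vv{V}$ by zero and write $\vv{V}=\curl\NewtonVec\curl\widetilde{\vv{V}}$; the surface part of $\curl\widetilde{\vv{V}}$ is $\gv{\tau}'\bigl(\vv{V}\times\vv{n}\bigr)$, and the single layer potential only maps $L^2(\Gamma)\to H^{\frac{3}{2}}_{\mathrm{loc}}$ — from $\vv{H}^{-\frac{1}{2}}(\Gamma)$ data you get $\vv{H}^1_{\mathrm{loc}}$ for the potential and hence only $\vv{L}^2$ for $\vv{V}$, no gain. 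To run your argument you therefore need $\vv{V}\times\vv{n}\in\vv{L}^2(\Gamma)$ up front, i.e.\ exactly the implication $\vv{U}\cdot\vv{n}\in L^2(\Gamma)\Rightarrow\vv{U}\times\vv{n}\in\vv{L}^2(\Gamma)$ that the lemma asserts; the phrase \enquote{a further scalar regularity step at the boundary} is hiding the entire content of the result, namely the Rellich-type estimate that for a harmonic (here: divergence- and curl-free) field the $L^2(\Gamma)$ norms of the normal and tangential boundary components control each other on a Lipschitz boundary.

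Second, the closing \enquote{a posteriori} argument for the equivalence of the two trace conditions does not work: membership in $\vv{H}^{\frac{1}{2}}(\Omega)$ is precisely the borderline case in which the trace theorem fails, so $\vv{U}\in\vv{H}^{\frac{1}{2}}(\Omega)$ together with $\DIV\vv{U},\curl\vv{U}\in L^2$ only yields traces in $H^{-\frac{1}{2}}(\Gamma)$ and $\vv{H}^{-\frac{1}{2}}(\Gamma)$, and \Cref{lem:trace-hodge-decomposition} cannot upgrade them to $L^2(\Gamma)$. The equivalence must be proven \emph{before}, not deduced \emph{after}, the interior regularity; this is why Costabel's argument goes through boundary integral operators and Rellich identities rather than through a Helmholtz splitting of the interior field. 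Your reduction of the interior regularity to Jerison--Kenig for the scalar Neumann problem is fine as far as it goes, but as written the proof assumes its hardest conclusion.
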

\begin{remark}
There are extensions of this result to domains with $\beta_1\neq0$, for
example in Monk's book.\cite[Theorem~3.47]{monk2003} However, this extension 
is lacking the statement $\vv{U}\cdot\vv{n}\in L^2(\Gamma)\iff
\vv{U}\times\vv{n}\in\vv{L}^2(\Gamma)$ which we need for our proof below. We
thus refer to the original work of Costabel for $\beta_1 = 0$, but one can
expect that this equivalence also generalises to the case $\beta_1\neq 0$.
Under this assumption the following regularity result remains true if 
zero Neumann harmonic components are prescribed for $\vv{U}^C =
\curl\vv{A}|_{\wholespace\setminus\overline{\Omega}}$.
\end{remark}

This result can directly be applied to velocity fields $\vv{U}$ solving
the div-curl system~\eqref{eqn:div-curl-system}. In the following, we show
that it also implies higher regularity of the associated stream functions
$\vv{A}$.

\begin{theorem}\label{thm:regularity}
Let $\Omega\subset\wholespace$ be a bounded, handle-free Lipschitz domain.
Let $\vv{F}\in\LebVec$ be given and fulfil the integrability condition~%
\eqref{eqn:integrability condition}, or the equivalent conditions~%
\eqref{eqn:intcond1} and~\eqref{eqn:intcond3}. Furthermore, let $g\in
L^2(\Gamma)$ be given, such that $\langle g, 1\rangle_{\Gamma_i}=0$
for all $i=0,\dots,\beta_2$.
	
Then, the unique solution $\vv{U}\in\LebVec$ of the div-curl system~\eqref{eqn:div-curl-system}
with $\vv{U}\cdot\vv{n}=g$ on~$\Gamma$ fulfils $\vv{U}\in\vv{H}^{\frac{1}{2}}(\Omega)$,
and its uniquely determined stream function $\vv{A}$ from~\Cref{thm:uniqueness of stream function}
fulfils $\vv{A}\in\vv{H}^{\frac{3}{2}}_{\mathrm{loc}}(\wholespace\setminus\Gamma)$.
\end{theorem}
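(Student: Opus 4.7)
The plan is to analyse the regularity of $\vv{U}$ and $\vv{A}$ separately, exploiting the explicit construction of~\Cref{sec:construction}, Costabel's~\Cref{lem:costabel-regularity}, and the mapping properties of the Newton and surface potentials.

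For $\vv{U}$, the claim is an immediate application of~\Cref{lem:costabel-regularity}: by~\Cref{thm:well-posedness} the solution $\vv{U}\in\vv{L}^2(\Omega)$ satisfies $\DIV\vv{U}=0\in L^2(\Omega)$, $\curl\vv{U}=\vv{F}\in\vv{L}^2(\Omega)$, and $\vv{U}\cdot\vv{n}=g\in L^2(\Gamma)$, whence $\vv{U}\in\vv{H}^{\frac{1}{2}}(\Omega)$.

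For $\vv{A}$, I would use the decomposition $\vv{A}=\NewtonVec\widetilde{\vv{F}}+\NewtonVec(\gv{\tau}'\vv{s})$ from~\eqref{eqn:construction}, specialised to the $\vv{L}^2$-variant of~\Cref{subsec:general-construction} where $\widetilde{\vv{F}}$ is the zero extension of~$\vv{F}$. Set $\vv{A}_1\coloneqq\NewtonVec\widetilde{\vv{F}}$ and $\vv{A}_2\coloneqq\NewtonVec(\gv{\tau}'\vv{s})$. Because $\widetilde{\vv{F}}\in\vv{L}^2(\wholespace)$ has compact support, the mapping property of $\NewtonVec$ gives $\vv{A}_1\in\vv{H}^2_{\mathrm{loc}}(\wholespace)$. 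The potential $\vv{A}_2$ is harmonic in $\wholespace\setminus\Gamma$; to control it up to $\Gamma$ from each side I aim to show $\vv{s}\in\vv{L}^2(\Gamma)$, whereupon the standard mapping property of the single-layer potential on a Lipschitz surface will yield $\vv{A}_2\in\vv{H}^{\frac{3}{2}}_{\mathrm{loc}}(\wholespace\setminus\Gamma)$.

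To show $\vv{s}=\tracecurlvec p-\tracegradp q\in\vv{L}^2(\Gamma)$, I treat each term separately. The component $q$ solves the Laplace--Beltrami equation with datum $\vv{F}\cdot\vv{n}\in H^{-\frac{1}{2}}(\Gamma)$ (well defined because $\vv{F}\in\Hdiv$), so elliptic regularity on $\Gamma$ gives $q\in H^{\frac{3}{2}}(\Gamma)/\reals$ and hence $\tracegradp q\in\vv{H}^{\frac{1}{2}}(\Gamma)\subset\vv{L}^2(\Gamma)$. The component $p$ solves the hypersingular equation~\eqref{eqn:hypersingular-bie} with right-hand side $g-\widehat{\vv{U}}\cdot\vv{n}$; once that right-hand side is known to be in $L^2(\Gamma)$, elliptic regularity for the hypersingular boundary integral operator delivers $p\in H^1(\Gamma)/\reals$, so $\tracecurlvec p\in\vv{L}^2(\Gamma)$.

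The main obstacle is therefore to verify $\widehat{\vv{U}}\cdot\vv{n}\in L^2(\Gamma)$, which requires bootstrapping the regularity of the intermediate field $\widehat{\vv{A}}=\NewtonVec\widehat{\vv{F}}$ of~\Cref{lem:fhat}. Its source $\widehat{\vv{F}}=\widetilde{\vv{F}}-\gv{\tau}'\tracegradp q$ is the sum of a volume term in $\vv{L}^2$ and a surface single-layer density of regularity $\vv{H}^{\frac{1}{2}}(\Gamma)$; propagating each contribution through $\NewtonVec$ produces $\vv{H}^2_{\mathrm{loc}}$-regularity up to $\Gamma$ from each side. Consequently $\widehat{\vv{U}}=\curl\widehat{\vv{A}}\in\vv{H}^1_{\mathrm{loc}}$ near $\Gamma$ and its normal trace lies in $H^{\frac{1}{2}}(\Gamma)\subset L^2(\Gamma)$, closing the bootstrap and completing the regularity argument.
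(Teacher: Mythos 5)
Your treatment of $\vv{U}$ and your overall strategy for $\vv{A}$ --- reduce everything to showing that the surface density $\vv{s}$ lies in $\vv{L}^2(\Gamma)$ and then invoke the mapping property $\Newton\gamma':L^2(\Gamma)\to H^{\frac{3}{2}}_{\mathrm{loc}}(\wholespace\setminus\Gamma)$ --- coincide with the paper's. The gap is in how you get $\vv{s}\in\vv{L}^2(\Gamma)$. Your argument for the $p$-part hinges on $\widehat{\vv{U}}\cdot\vv{n}\in L^2(\Gamma)$, which you bootstrap from the claims that $q\in H^{\frac{3}{2}}(\Gamma)$, hence $\tracegradp q\in\vv{H}^{\frac{1}{2}}(\Gamma)$, hence $\widehat{\vv{A}}\in\vv{H}^2_{\mathrm{loc}}$ up to $\Gamma$ from each side. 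On a general Lipschitz surface this chain breaks at the first link: the Sobolev scale on $\Gamma$ is only meaningful for $|s|\leq 1$, and elliptic regularity for $\tracelaplace$ beyond the energy space $H^1(\Gamma)$ depends non-trivially on the geometry of $\Gamma$ (the paper itself flags this in \Cref{sec:rt-method}, citing Buffa et al.). What one honestly has is only $q\in H^1(\Gamma)/\reals$, i.\,e.\ $\tracegradp q\in\vv{L}^2(\Gamma)$; then the single-layer mapping property gives merely $\widehat{\vv{A}}\in\vv{H}^{\frac{3}{2}}_{\mathrm{loc}}(\wholespace\setminus\Gamma)$ and $\widehat{\vv{U}}\in\vv{H}^{\frac{1}{2}}_{\mathrm{loc}}$, which has no $L^2(\Gamma)$ normal trace by soft trace theory. (One could try to rescue the step via non-tangential maximal function estimates for layer potentials on Lipschitz domains, but that is a substantially deeper tool than what you invoke.)

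The paper sidesteps this entirely. It first treats $g=0$: for the \emph{actual} solution $\vv{A}_0$ it applies the representation formula of \Cref{lem:representation-formula}, which identifies the density as $\vv{s}_0=\llbracket\gv{\rho}\curl\vv{A}_0\rrbracket$; since $\curl\vv{A}_0=\gv{0}$ outside $\overline{\Omega}$ by the construction in \Cref{thm:existence of stream function}, this reduces to $\vv{s}_0=\vv{U}_0\times\vv{n}$, and the equivalence $\vv{U}_0\cdot\vv{n}\in L^2(\Gamma)\iff\vv{U}_0\times\vv{n}\in\vv{L}^2(\Gamma)$ built into \Cref{lem:costabel-regularity} yields $\vv{s}_0\in\vv{L}^2(\Gamma)$ from $\vv{U}_0\cdot\vv{n}=0$. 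The general case then only adds a correction $p$ solving the hypersingular equation with right-hand side $\langle g,v\rangle_\Gamma$ --- with $g\in L^2(\Gamma)$ given as data --- so $p\in H^1(\Gamma)$ and $\tracecurlvec p\in\vv{L}^2(\Gamma)$ follow from the known regularity theory for that operator, and no intermediate normal trace ever needs to be controlled. You should restructure your argument along these lines: use Costabel's equivalence on the tangential trace of the final $g=0$ solution rather than trying to bootstrap the regularity of $\widehat{\vv{U}}\cdot\vv{n}$ through the construction.
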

\begin{proof}
The regularity of $\vv{U}$ is exactly Costabel's result~\Cref{lem:costabel-regularity}.	
For the regularity of $\vv{A}$, we first consider the case $g=0$. Thus, let
$\vv{U}_0$ denote the unique solution of the div-curl system~%
\eqref{eqn:div-curl-system} that satisfies $\vv{U}_0\cdot\vv{n}=0$ on $\Gamma$,
and let $\vv{A}_0\in\HlocVec$ denote its associated stream
function. An application of the representation formula for the vector Laplacian
then yields:
\begin{equation}
\left\lbrace
\begin{aligned}
\vv{A}_0 &= \NewtonVec(\widetilde{\vv{F}}+\gv{\tau}'\vv{s}_0) & &\text{on $\wholespace$}, \\
\vv{s}_0 &= \llbracket\curl\vv{A}_0\times\vv{n}\rrbracket     & &\text{on $\Gamma$},
\end{aligned}\right.
\end{equation}
where $\widetilde{\vv{F}}\in\vv{L}^2(\wholespace)$ is $\vv{F}$'s zero extension
as defined in \eqref{eqn:zero-extension-l2}. Clearly, from the mapping properties
of the Newton operator, it follows that
$\NewtonVec\widetilde{\vv{F}}\in\vv{H}^2_{\mathrm{loc}}(\wholespace)$.
For the boundary term we note that from the construction of $\vv{A}_0$ in the
proof of \Cref{thm:existence of stream function} it is clear that $\curl\vv{A}_0
=\gv{0}$ in $\wholespace\setminus\overline\Omega$. This implies that
\begin{equation}
\vv{s}_0 = \vv{U}_0\times\vv{n}\qquad\text{on $\Gamma$},
\end{equation}
and because of \Cref{lem:costabel-regularity} this yields $\vv{s}_0\in\vv{L}^2
(\Gamma)$. The boundary term $\NewtonVec\gv{\tau}'\vv{s}_0$ may thus
alternatively be interpreted as a component-wise application of the scalar
single layer potential operator $\Newton\gamma'$ to the components of $\vv{s}_0$.
For this operator the following mapping property is known:%
\cite[Remark~3.1.18b]{sauter2011}
\begin{equation}
\Newton\gamma': L^2(\Gamma)\to H^{\frac{3}{2}}_{\mathrm{loc}}(\wholespace\setminus\Gamma),
\end{equation}
and thus $\NewtonVec\gv{\tau}'\vv{s}_0\in\vv{H}^{\frac{3}{2}}_{\mathrm{loc}}(\wholespace\setminus\Gamma)$.
	
For general boundary data $\vv{U}\cdot\vv{n}=g\in L^2(\Gamma)$, one then needs
to solve the hypersingular boundary integral equation:
\begin{equation}
\forall v\in H^{\frac{1}{2}}(\Gamma):\qquad
\langle\gv{\tau}\NewtonVec\gv{\tau}'\tracecurlvec p,\tracecurlvec v\rangle_\Gamma
= \langle g,v\rangle_\Gamma,
\end{equation}
for the unknown $p\in H^{\frac{1}{2}}(\Gamma)/\reals$ and set $\vv{s}
\coloneqq \vv{s}_0 + \tracecurl p$, $\vv{A}\coloneqq\NewtonVec(\widetilde{\vv{F}}+
\gv{\tau}'\vv{s})$. For the integral equation the following regularity
result is known:\cite[Theorem~3.2.3b]{sauter2011}
\begin{equation}
g\in L^2(\Gamma)\Longrightarrow p\in H^1(\Gamma).
\end{equation}
Thus $\tracecurl p\in\vv{L}^2(\Gamma)$ and by the same arguments as above one
obtains that $\vv{A}\in\vv{H}^{\frac{3}{2}}_{\mathrm{loc}}(\wholespace\setminus\Gamma)$.
\end{proof}

\section{Numerical Approximations}\label{sec:numerics}
The case where $\vv{F}\in\vv{L}^r(\Omega)$ for some $r>2$, leads to a
particularly simple discretisation using Raviart--Thomas elements. In practice this
condition on $\vv{F}$ is often not a real restriction. Here, we will only give a
brief sketch of this scheme and its analysis. The case $\vv{F}\in\HOneDualVec$ is
technically more involved, and we restrict ourselves to giving some remarks on
possible numerical realisations. At the end of this section we give a numerical
example with $\vv{F}\in\vv{C}^\infty(\Omega)$ and $\vv{U}\in\vv{C}^\infty(\Omega)$.
Even for such smooth data, the tangential $\vv{A}_{\mathrm{T}}\in\Hcurl\cap\HOdiv$
shows quite strong singularities. The newly proposed stream function
$\vv{A}\in\HOneVec$ is not smooth either, but displays increased regularity
compared to $\vv{A}_{\mathrm{T}}$.
 
\subsection{Meshes and Spaces}
For our numerical approximations we will assume that the domain $\Omega$ is polyhedral,
handle-free, and that a family $\lbrace\mathcal{T}_h\rbrace_{h>0}$ of shape-regular,
quasi-uniform, tetrahedral meshes is available. We will write $T\in\mathcal{T}_h$ for
the tetrahedra of such a mesh, the parameter $h>0$ refers to the average diameter of
these tetrahedra. On these meshes we respectively define standard Lagrangian elements,
N\'ed\'elec elements, Raviart--Thomas elements, and discontinuous elements of
\emph{order} $n\in\mathbb{N}$:
\begin{equation}
\begin{aligned}
\FEMcont &\coloneqq \lbrace V_h\in\HOne\,|\,\forall T\in\mathcal{T}_h:
V_h|_T\in\mathbb{P}_{n-1} \rbrace, \\
\FEMNED &\coloneqq \lbrace\vv{V}_h\in\Hcurl\,|\,\forall T\in\mathcal{T}_h:
\vv{V}_h|_T = \vv{a}+\vv{b},\ \vv{a}\in\mathbb{P}_{n-1}^3,\ \vv{b}\in\overline{\mathbb{P}}_{n}^3,\
\vv{b}(\vv{x})\cdot\vv{x}\equiv 0\rbrace, \\ 
\FEMRT &\coloneqq \lbrace\vv{V}_h\in\Hdiv\,|\,\forall T\in\mathcal{T}_h:
\vv{V}_h|_T = \vv{a} + \vv{x}b, \vv{a}\in\mathbb{P}_{n-1}^3, b\in\overline{\mathbb{P}}_{n-1}\rbrace, \\
\FEMdisc &\coloneqq \lbrace V_h\in L^2(\Omega)\,|\,\forall T\in\mathcal{T}_h:
V_h|_T\in\mathbb{P}_{n-1} \rbrace.
\end{aligned}
\end{equation}
Here, $\mathbb{P}_{n-1}$ refers to the space of polynomials of total \emph{degree}
$n-1$ or less, and $\overline{\mathbb{P}}_{n-1}$ to the space homogeneous
polynomials of  total degree exactly $n-1$. We also make use of sub-spaces with
zero boundary conditions:
\begin{equation}
\begin{alignedat}{10}
\FEMOcont &\coloneqq &\  &\lbrace &    V_h  &\in\FEMcont & \,&|\, & \gamma      V_h  &=0\rbrace, \\
\FEMNEDO  &\coloneqq &\  &\lbrace & \vv{V}_h&\in\FEMNED  & \,&|\, & \gv{\tau}\vv{V}_h&=\boldsymbol{0}\rbrace, \\ 
\FEMRTO   &\coloneqq &\  &\lbrace & \vv{V}_h&\in\FEMRT   & \,&|\, & \nu      \vv{V}_h&=0\rbrace.
\end{alignedat}
\end{equation}

A family of tetrahedral meshes $\lbrace\mathcal{T}_h\rbrace_{h>0}$ automatically
gives rise to a family of boundary triangulations $\lbrace\pd\mathcal{T}_h\rbrace_{h>0}$,
consisting of triangles $t\in\pd\mathcal{T}_h$. On these boundary meshes we will
make use of the boundary element spaces $\BEMcont$, $\BEMRT$, and $\BEMdisc$,
which are the natural analogues of their respective counterparts on the domain
$\Omega$.

\subsection{The Case $\vv{F}\in\vv{L}^r(\Omega)$ with $r>2$}\label{sec:rt-method}
Let $\vv{F}\in\vv{L}^2(\Omega)$ be given, and let $\vv{F}$ fulfil the integrability
conditions $\DIV\vv{F}=0$ and $\langle\vv{F}\cdot\vv{n},1\rangle_{\Gamma_i}=0$,
$i=1,\dotsc,\beta_2$. Let us furthermore assume that there exists an $r>2$ such
that $\vv{F}\in\vv{L}^r(\Omega)$. This condition in particular allows us to define
$\vv{F}_h\in\FEMRT$ as the \emph{canonical interpolant} of $\vv{F}$.\cite[Section~III.3]{brezzi1991}
Note that then $\vv{F}_h$ \emph{also fulfils the integrability conditions}, and
furthermore the standard interpolation error-bound:
\begin{equation}
\Vert\vv{F}-\vv{F}_h\Vert_{\LebVec} = \Vert\vv{F}-\vv{F}_h\Vert_{\Hdiv}
\lesssim h^s\vert\vv{F}\vert_{\vv{H}^s(\Omega)} \qquad 1\leq s\leq n.
\end{equation}
Using the boundedness of the canonical interpolator on $\vv{L}^r(\Omega)\cap\Hdiv$,
we furthermore easily obtain by standard arguments that for $n\geq 2$:
\begin{equation}\label{eqn:finterpolation-error}
\Vert\vv{F}-\vv{F}_h\Vert_{\HOneDualVec} \lesssim h^{s+1}\times
\begin{cases}
\Vert\vv{F}\Vert_{\vv{W}^{s,r}(\Omega)} & \text{if } 0\leq s\leq 1, \\
\vert\vv{F}\vert_{\vv{H}^{s}(\Omega)}   & \text{if } 1\leq s\leq n.
\end{cases}
\end{equation}
Let us denote by $\widetilde{\vv{F}}_h$ the zero extension of $\vv{F}_h$ to
$\wholespace$. The Newton potential $\NewtonVec\widetilde{\vv{F}}_h$ can then be
evaluated analytically and component-wise, as $\vv{F}_h$ is a piece-wise
polynomial on a simplical mesh.

Next we seek the correction $\vv{r}\coloneqq\tracegrad q$ on the boundary, to
cancel out $\DIV\widetilde{\vv{F}}_h$. It is possible to solve the Laplace--%
Beltrami equation $\tracelaplace q_h = \vv{F}_h\cdot\vv{n}$ using a standard Galerkin
method on the space $\BEMcont/\reals$. The convergence rate of such an approach
would then depend on the regularity of $q$, which in turn non-trivially depends
on the shape of the boundary $\Gamma$.\cite[Theorem~8]{buffa2002b} Note, however,
that we have $\vv{F}_h\cdot\vv{n}\in\BEMdisc/\reals$, because $\vv{n}$ is constant
on each triangle $t\in\pd\mathcal{T}_h$. Furthermore we have $\tracediv\BEMRT=\BEMdisc/
\reals$, so it makes sense to use the mixed formulation with Raviart--Thomas
boundary elements instead. Hence, we seek $(\vv{r}_h,q_h)\in\BEMRT\times\BEMdisc/\reals$
such that:
\begin{equation}
\begin{aligned}
\forall \vv{v}_h\in\BEMRT&:      & \int_{\Gamma} \vv{r}_h\cdot\vv{v}_h{\mathrm d}S
 - \int_{\Gamma} q_h(\tracediv\vv{v}_h){\mathrm d}S &= 0, \\
\forall l_h\in\BEMdisc/\reals&:  & \int_{\Gamma} (\tracediv\vv{r}_h)l_h\,{\mathrm d}S
 &= \int_\Gamma (\vv{F}_h\cdot\vv{n}) l_h\,{\mathrm d}S.
\end{aligned}
\end{equation}
It is well-known that this formulation is well-posed, and that moreover we have
$\tracediv\vv{r}_h = \vv{F}_h\cdot\vv{n}$ exactly. Thus, abbreviating
$\widehat{\vv{F}}_h\coloneqq\widetilde{\vv{F}}_h+\gv{\tau}'\vv{r}_h$, we have
$\DIV\widehat{\vv{F}}_h = 0$ on $\wholespace$ \emph{exactly}. The Newton potential
of $\gv{\tau}'\vv{r}_h$ is the component-wise application of the standard,
scalar single layer potential operator $\Newton\gamma'$ to the components of
$\vv{r}_h$ and can also be computed efficiently analytically.

It remains to compute $p_h$. For this we abbreviate $\widehat{\vv{A}}_h :=
\NewtonVec(\widetilde{\vv{F}}_h+\gv{\tau}'\vv{r}_h)$ and seek $p_h\in\BEMcont/
\reals$ such that:
\begin{equation}
\forall v_h\in\BEMcont/\reals:
\frac{1}{4\pi}\int_\Gamma\int_\Gamma
\frac{\tracecurlvec p_h(\vv{y})\cdot\tracecurlvec v_h(\vv{x})}{|\vv{x}-\vv{y}|}
\,{\mathrm d}S(\vv{y})\,{\mathrm d}S(\vv{x}) =
\int_{\Gamma}\left(g-\curl\widehat{\vv{A}}_h\cdot\vv{n}\right)v_h\,{\mathrm d}S.
\end{equation}
This is a standard Galerkin boundary element method, analysis and efficient
implementation techniques can for example be found in the book of Sauter and Schwab.\cite{sauter2011}
The numerical approximation is then defined as $\vv{s}_h\coloneqq\tracecurlvec
p_h+\vv{r}_h$, and $\vv{A}_h\coloneqq\NewtonVec\widetilde{\vv{F}}_h+\gv{\tau}'\vv{s}_h$.

Let us define $g_h\coloneqq\vv{n}\cdot\curl\vv{A}_h$. Then $\vv{A}_h$ is the
\emph{exact solution to the perturbed problem} with data $\vv{F}_h$ and $g_h$,
so by the well-posedness result of \Cref{thm:well-posedness}, we immediately
obtain:
\begin{equation}\label{eqn:aposteriori-error}
\Vert\curl\vv{A}_h-\vv{U}\Vert_{\LebVec} \lesssim
\Vert\vv{A}_h-\vv{A}\Vert_{\HOneVec} \lesssim
\Vert\vv{F}_h-\vv{F}\Vert_{\HOneDualVec} + \Vert g_h-g\Vert_{H^{-\frac{1}{2}}(\Gamma)}.
\end{equation}

 For the first part $\Vert\vv{F}-\vv{F}_h\Vert_{\HOneDualVec}$ we can use
\eqref{eqn:finterpolation-error}. Note that this part of the error neither depends
on the regularity of $\vv{A}$ nor on that of $\vv{U}$! For smooth data $\vv{F}$
one may thus use coarse meshes and high order $n$ on $\Omega$. All of the
\enquote{irregularity} due to the non-smooth boundaries is \enquote{concentrated}
in the term $\Vert g-g_h\Vert$, and for $g\in L^2(\Gamma)$, as a consequence of
the regularity result~\Cref{thm:regularity}, we may at most expect convergence of
order $\mathcal{O}(h^\frac{1}{2})$. However, also note that
\eqref{eqn:aposteriori-error} is an \emph{a-posteriori} bound that is easily
computable. The term $\Vert g - g_h\Vert$, for example, can be made arbitrarily
small by increasing the mesh resolution for $p_h$. Because $p_h$ is  the solution
to a \emph{scalar} equation on the boundary, adaptive refinement strategies are
a comparatively cheap remedy to tackle the irregularity.

\subsection{Remarks on the General Case $\vv{F}\in\HOneDualVec$}
It is straightforward to discretise the general construction given in
\Cref{subsec:general-construction}. An obvious choice would be a standard
Galerkin method and finding $\vv{R}_h\in\FEMNEDO$ such that $\mathfrak{B}%
(\vv{R}_h,\vv{V}_h)=\langle\vv{F},\vv{V}_h\rangle$ for all $\vv{V}_h\in\FEMNEDO$.
Ultimately, however, the rate of convergence for such a method would less depend
on the regularity of $\vv{F}$ itself, but more on that of its Riesz representative
$\vv{R}$. If $\vv{F}$ happens to be smooth, $\vv{R}$ can turn out to be much less
regular than the function $\vv{F}$ it represents.

An alternative approach could be based on the normal potential $\vv{A}_{\mathrm{N}}
\in\HOcurl\cap\Hdiv$ for the velocity field $\vv{U}_0\in\LebVec$ with $\vv{U}_0
\cdot\vv{n}=0$ on $\Gamma$. In their work Amrouche et al. describe a finite element
method for its approximation. For example, for a hole-free domain ($\beta_2=0$),
their method reads as follows. Find
$(\vv{A}_{\mathrm{N},h},P_h)\in\FEMNEDO\times S_{h,0}^n(\Omega)$ such that:%
\cite[Equation~(4.13)]{amrouche1998}
\begin{equation}
\begin{aligned}
\forall\vv{V}_h\in\FEMNEDO&: &
\int_{\Omega}\curl\vv{A}_{\mathrm{N},h}\cdot\curl\vv{V}_{h}\,{\mathrm d}\vv{x} +
\int_{\Omega}\vv{V}_{h}\cdot\gradp P_h\,{\mathrm d}\vv{x} &=
\int_{\Omega}\vv{U}_0\cdot\curl\vv{V}_h\,{\mathrm d}\vv{x}, \\
\forall Q_h\in S_{h,0}^n(\Omega)&: &
\int_{\Omega}\vv{A}_{\mathrm{N},h}\cdot\gradp Q_h\,{\mathrm d}\vv{x} &= 0.
\end{aligned}
\end{equation}
This scheme has the remarkable property that $\Vert\curl\vv{A}_{\mathrm{N},h}-
\vv{U}_0\Vert_{\LebVec}\lesssim h^s\Vert\vv{U}_0\Vert_{\vv{H}^{s}(\Omega)}$ for
all $0\leq s\leq n$, regardless of the regularity of $\vv{A}_{\mathrm{N}}$.
Only minor modifications are necessary for the case $\beta_2>0$.

Note that the right side may be immediately replaced with $\langle\vv{F},\vv{V}_h
\rangle$, as $\vv{V}_h\in\HOcurl$ and $\vv{F}\in\HOcurldual$ by \Cref{lmm:identification
of spaces}. This is not possible in the corresponding method for the tangential
potential $\vv{A}_{\mathrm{T}}$, where the test functions do not have a vanishing
tangential trace.

The true solution fulfils $\DIV\vv{A}_{\mathrm{N}} = 0$ in $\Omega$, it then
suffices to cancel the spurious divergence of its zero extension $\widetilde{%
\vv{A}}_{\mathrm{N}}$. We have $\DIV\widetilde{\vv{A}}_{\mathrm{N}} =
-\gamma'\vv{A}_{\mathrm{N}}\cdot\vv{n}$, so we may solve $\tracelaplace q =
\vv{A}_{\mathrm{N}}\cdot\vv{n}$, set $\vv{r}\coloneqq\tracegrad q$, and obtain
$\widehat{\vv{A}}\coloneqq\widetilde{\vv{A}}_{\mathrm{N}}+\NewtonVec\gv{\tau}'
\vv{r}\in\vv{H}^1_{\mathrm{loc}}(\wholespace)$. The numerical approximation
$\vv{A}_{\mathrm{N},h}$ will usually not be exactly divergence free, here the
jumps of the normal traces on the internal faces will also need to be cancelled
out in order to achieve $\HOneVec$-regularity. From here  we may proceed
analogously as before: for the correction of the normal trace of $\curl
\widehat{\vv{A}}$ we solve a boundary integral equation.

\subsection{An Example Illustrating Higher Regularity}\label{sec:numerical-example}
We consider the domain $\Omega\coloneqq(0,1)^3\setminus[0.1,0.8]^3$ and the
smooth velocity field $\vv{U}\in\vv{C}^\infty(\Omega)$ associated to the
vorticity $\vv{F}\in\vv{C}^\infty(\Omega)$ given by:
\begin{equation}
\vv{U}(\vv{x}) \coloneqq \frac{1}{2}\begin{pmatrix}-x_2\\x_1\\0\end{pmatrix},\qquad
\vv{F}(\vv{x}) \coloneqq \curl\vv{U}(\vv{x}) = \begin{pmatrix}0\\0\\1\end{pmatrix}.
\end{equation}
The domain $\Omega$ was chosen to be asymmetric, non-smooth, non-convex, and
topologically non-trivial ($\beta_2=1$), while at the same time being
\enquote{easy} from the viewpoint of meshing.

Neither for the tangential potential $\vv{A}_\mathrm{T}$, nor for the potential
introduced in this work explicit expressions are known. Thus, the finite element
method by Amrouche et al. has been implemented to compute $\vv{A}_\mathrm{T}$.%
\cite[Equation (4.5)]{amrouche1998} We use order $n=2$ for
$\vv{A}_{\mathrm{T},h}\in\FEMNED$, such that the velocity field is recovered
exactly.

For the other stream function, notice that in this case the Newton potential:
\begin{equation}
\NewtonVec\widetilde{\vv{F}}(\vv{x}) =
\frac{1}{4\pi}\int_{\Omega}\frac{{\mathrm d}\vv{y}}{|\vv{x}-\vv{y}|}\,\cdot\,
\begin{pmatrix}0\\0\\1\end{pmatrix}
\end{equation}
can be evaluated directly, without further discretisation.\cite{kirchhart2020b}
The Laplace--Beltrami equation for $q_h$ and $\vv{r}_h$, as well as the
hypersingular boundary integral equation for $p_h$, are discretised as described
above, with order $n=2$.

We remark that the method by  Amrouche et al. requires to use the velocity field
$\vv{U}$ as input, while the approach discussed in this work only requires $\vv{F}$
and the boundary data $\vv{U}\cdot\vv{n}$.

The numerical results along the line $(x_1,0.5,0.8)^\top$ are shown in
\Cref{fig:lineplots}{\kern-5pt}. In the interval $x_1\in[0.1,0.8]$ this
line touches the internal boundary $\Gamma_1$. One clearly sees that close
to the corners at $x_1=0.1$ and $x_1=0.8$ neither solutions are smooth. But
while the tangential potential $\vv{A}_{\mathrm{T}}$ develops very steep
gradients and jumps near $x_1=0.1$, the new potential $\vv{A}$ only exhibits
a small kink, which suggests more regularity.

\begin{figure*}
\centering
\begin{tikzpicture}
\begin{groupplot}
[
    small,
    no markers,
    grid = major,
    group style={group size=2 by 1, horizontal sep=0.1\textwidth},
    every axis plot/.append style={very thick},
    xmin = {0},
    xmax = {1},
    ymin = {-0.15},
    ymax = {0.30},
    ytick = {{-0.1},{0},{0.1},{0.2},{0.3}},
    yticklabels = {{$-0.10$},{$0$},{$0.10$},{$0.20$},{$0.30$}},
]
\nextgroupplot
\addplot table[x=x,y=Ax]{tangential_lineplot.dat};
\addplot table[x=x,y=Ay]{tangential_lineplot.dat};
\addplot table[x=x,y=Az]{tangential_lineplot.dat};
\nextgroupplot[%
    legend to name=SharedLegend,
    legend style = {legend columns=3,/tikz/every even column/.append style={column sep=0.5cm}},
    xmin = {0},
    xmax = {1},
    ymin = {-0.05},
    ymax = {0.10},
    ytick = {{-0.05},{0},{0.05},{0.10}},
    yticklabels = {{$-0.05$},{$0$},{$0.05$},{$0.10$}},
]
\addplot table[x=x,y=Ax]{lineplot.dat};
\addplot table[x=x,y=Ay]{lineplot.dat};
\addplot table[x=x,y=Az]{lineplot.dat};
\addlegendentry{$A_1$};
\addlegendentry{$A_2$};
\addlegendentry{$A_3$};
\end{groupplot}
\path (group c1r1.north west) -- node[above=0.5cm]{\pgfplotslegendfromname{SharedLegend}} (group c2r1.north east);
\end{tikzpicture}
\caption{\label{fig:lineplots} Numerical approximations of two different vector
potentials for the same velocity field $\vv{U}=\tfrac{1}{2}(-x_2,x_1,0)^\top$ on
the domain $\Omega=(0,1)^3\setminus[0.1,0.8]^3$, plotted along the line
$(x_1,0.5,0.8)^\top$. Left: the tangential vector potential $\vv{A}_{\mathrm{T}}
\in\HOdiv\cap\Hcurl$ by Amrouche et al.\cite{amrouche1998} Especially $A_1$ and
$A_2$ show very steep gradients at $x_1=0.1$, and furthermore exhibit jump-type
discontinuities. Further discontinuities away from $x_1\in\lbrace0.1,0.8\rbrace$
are due to the finite element approximation. Right: the new vector potential
$\vv{A}$ presented in this work. In this case we have $\vv{A}\in\vv{H}^{\frac{3}%
{2}}(\Omega)$; only a small kink in the components is visible at $x_1=0.1$ and
$x_1=0.8$. Also note the different scales.}
\end{figure*}

\section{Conclusions and Outlook}
In this work, we have established precise conditions under which a divergence-free
velocity field $\vv{U}\in\LebVec$ can be recovered from its given curl and
boundary data $\vv{U}\cdot\vv{n}$. Additionally, minor complementary assumptions
on the boundary data guarantees that this velocity field can be represented in
terms of a stream function $\vv{A}\in\vv{H}^1(\Omega)$, which can be explicitly
constructed. This stream function is more regular than the tangential vector
potential suggested by Amrouche et al.\cite{amrouche1998}

The regularity result of \Cref{thm:regularity} is sharp in several ways. Let us
for example consider the case of a handle-free domain $\Omega$ with $\beta_1=0$
and suppose that $\vv{F}\equiv\gv{0}$. It is then a classical result that the
velocity field $\vv{U}$ can be written in terms of the gradient of a scalar
potential: $\vv{U}=\grad P$, where $\Laplace P = 0$ in $\Omega$. Even if the
given boundary data $\vv{U}\cdot\vv{n}$ is smooth, it is known from the
regularity theory for the scalar Laplace equation that, on general Lipschitz
domains, the highest regularity one can expect is $P\in H^{\frac{3}{2}}(\Omega)$,
which therefore only leads to $\vv{U}\in\vv{H}^{\frac{1}{2}}(\Omega)$. For
any $\varepsilon>0$ there are indeed examples of domains $\Omega$ and boundary
data $\vv{U}\cdot\vv{n}$ where $P\notin H^{\frac{3}{2}+\varepsilon}(\Omega)$.
In this sense, the vector potential $\vv{A}\in\vv{H}^{\frac{3}{2}}(\Omega)$
introduced in this work has the highest possible regularity one can expect for
arbitrary Lipschitz domains.

However, an interesting question remains. Suppose that the given data $\vv{F}$
and $\vv{U}\cdot\vv{n}$ are such that the velocity field $\vv{U}$ does happen to
have higher regularity, say $\vv{U}\in\vv{H}^s(\Omega)$ for some $s\gg\frac{1}{2}$.
McIntosh and Costabel have proven that in this case, another vector potential
$\vv{A}_s\in\vv{H}^{1+s}(\Omega)$ exists.\cite[Corollary~4.7]{mcintosh2010} In
other words, there always exists a stream function that is more regular than its
velocity field by one order. In the numerical example discussed
in~\cref{sec:numerical-example}, such a smooth vector potential is given by:
\begin{equation}
\vv{A}_{\infty}(\vv{x}) = -\frac{1}{4}\begin{pmatrix}0\\0\\x_1^2 + x_2^2\end{pmatrix}.
\end{equation}
However, the numerical experiments indicate that the vector potential
proposed in this work is \emph{not smooth}. Therefore, the problem of devising
an algorithm to approximate reliably and efficiently the \enquote{smoothest possible}
stream function remains open. 

\section*{Acknowledgements}We would like to use the opportunity to express our
gratitude for the fruitful discussions with Ralf Hiptmair and his feedback on
previous versions of this manuscript. We would also like to thank the reviewers
for their comments, which greatly improved the quality of this manuscript,
lead us to investigating the case $\vv{F}\in\HOneDualVec$, and to expand the
section on numerical methods.

\bibliography{literature.bib}{}
\end{document}